\theoremstyle{plain}
\newtheorem{theorem}{Theorem}
\newtheorem*{theoremnonumber}{Theorem}
\theoremstyle{definition}
\newtheorem{observation}{Observation}
\newtheorem{definition}{Definition}
\newtheorem{lemma}{Lemma}
\newtheorem{proposition}{Proposition}
\newtheorem{corollary}{Corollary}
\newcommand{\monoidal}{\varphi}
\newcommand{\comonoidal}{\psi}
\newcommand{\monbin}{\varphi}		
\newcommand{\monnul}{\varphi_0}		
\newcommand{\combin}{\psi}		
\newcommand{\comnul}{\psi_0}		
\newcommand{\boot}{\start}		
\newcommand{\unit}{\tau}
\newcommand{\counit}{\gamma}
 \def\dated#1{\def\thedate{#1}}%
\newdimen\high%
\newdimen\ul%
\newdimen\wdth%
\def\ratchet#1#2{\ifnum#1<#2\global #1=#2\fi}%
\def\ifnextchar#1#2#3{\let\@tempe%
#1\def\@tempa{#2}\def\@tempb{#3}\futurelet%
    \@tempc\@ifnch}%
\def\@ifnch{\ifx \@tempc \@sptoken \let\@tempd\@xifnch%
      \else \ifx \@tempc \@tempe\let\@tempd\@tempa\else\let\@tempd\@tempb\fi%
      \fi \@tempd}%
\def\:{\let\@sptoken= } \:  
\def\:{\@xifnch} \expandafter\def\: {\futurelet\@tempc\@ifnch}%
\let\ifnextchar\@ifnextchar%
\newdimen\axis \axis=\fontdimen22\textfont2%
\def\scalefactor#1{\ul=#1\ul \X@xbase=#1\X@xbase \Y@ybase=#1\Y@ybase}%
\def\hscalefactor#1{\ul=#1\ul \X@xbase=#1\X@xbase}%
\def\vscalefactor#1{\ul=#1\ul \Y@ybase=#1\Y@ybase}%
\def\fontscale#1{%
\if#1h\relax%
\font\xydashfont=xydash10 scaled \magstephalf%
\font\xyatipfont=xyatip10 scaled \magstephalf%
\font\xybtipfont=xybtip10 scaled \magstephalf%
\font\xybsqlfont=xybsql10 scaled \magstephalf%
\font\xycircfont=xycirc10 scaled \magstephalf%
\else%
\font\xydashfont=xydash10 scaled \magstep#1%
\font\xyatipfont=xyatip10 scaled \magstep#1%
\font\xybtipfont=xybtip10 scaled \magstep#1%
\font\xybsqlfont=xybsql10 scaled \magstep#1%
\font\xycircfont=xycirc10 scaled \magstep#1%
\fi}%
\def\bfig{\vcenter\bgroup\xy}%
\def\efig{\endxy\egroup}%
\def\car#1#2\nil{#1}%
\def\morphism{\ifnextchar({\morphismp}{\morphismp(0,0)}}%
\def\morphismp(#1){\ifnextchar|{\morphismpp(#1)}{\morphismpp(#1)|a|}}%
\def\morphismpp(#1)|#2|{\ifnextchar/{\morphismppp(#1)|#2|}%
    {\morphismppp(#1)|#2|/>/}}%
\def\morphismppp(#1)|#2|/#3/{%
    \ifnextchar<{\morphismpppp(#1)|#2|/#3/}%
    {\morphismpppp(#1)|#2|/#3/<\default,0>}}%
\def\morphismpppp(#1,#2)|#3|/#4/<#5,#6>[#7`#8;#9]{%
\xend#1\advance \xend by #5%
\yend#2\advance \yend by #6%
\domorphism(#1,#2)|#3|/#4/<#5,#6>[{#7}`{#8};{#9}]}%
\def\domorphism(#1,#2)|#3|/#4/<#5,#6>[#7`#8;#9]{%
\def\next{\car#4.\nil}%
\if@\next\relax%
 \if#3l%
  \ifnum #6>0%
   \POS(#1,#2)*+!!<0ex,\axis>{#7}\ar#4^-{#9} (\xend,\yend)*+!!<0ex,\axis>{#8}%
  \else%
   \POS(#1,#2)*+!!<0ex,\axis>{#7}\ar#4_-{#9} (\xend,\yend)*+!!<0ex,\axis>{#8}%
  \fi%
 \else \if#3m%
    \setbox0\hbox{$#9$}%
   \ifdim \wd0=0pt%
     \POS(#1,#2)*+!!<0ex,\axis>{#7}\ar#4 (\xend,\yend)*+!!<0ex,\axis>{#8}%
   \else%
     \POS(#1,#2)*+!!<0ex,\axis>{#7}\ar#4|-*+<1pt,4pt>{\labelstyle#9}%
       (\xend,\yend)*+!!<0ex,\axis>{#8}%
   \fi%
 \else \if#3r%
  \ifnum #6<0%
   \POS(#1,#2)*+!!<0ex,\axis>{#7}\ar#4^-{#9} (\xend,\yend)*+!!<0ex,\axis>{#8}%
  \else%
   \POS(#1,#2)*+!!<0ex,\axis>{#7}\ar#4_-{#9} (\xend,\yend)*+!!<0ex,\axis>{#8}%
  \fi%
 \else \if#3a%
  \ifnum #5>0%
   \POS(#1,#2)*+!!<0ex,\axis>{#7}\ar#4^-{#9} (\xend,\yend)*+!!<0ex,\axis>{#8}%
  \else%
   \POS(#1,#2)*+!!<0ex,\axis>{#7}\ar#4_-{#9} (\xend,\yend)*+!!<0ex,\axis>{#8}%
  \fi%
 \else \if#3b%
  \ifnum #5<0%
   \POS(#1,#2)*+!!<0ex,\axis>{#7}\ar#4^-{#9} (\xend,\yend)*+!!<0ex,\axis>{#8}%
  \else%
   \POS(#1,#2)*+!!<0ex,\axis>{#7}\ar#4_-{#9} (\xend,\yend)*+!!<0ex,\axis>{#8}%
  \fi%
 \else%
   \POS(#1,#2)*+!!<0ex,\axis>{#7}\ar#4 (\xend,\yend)*+!!<0ex,\axis>{#8}%
 \fi\fi\fi\fi\fi%
\else%
 \if#3l%
  \ifnum #6>0%
   \POS(#1,#2)*+!!<0ex,\axis>{#7}\ar@{#4}^-{#9} (\xend,\yend)*+!!<0ex,\axis>{#8}%
  \else%
   \POS(#1,#2)*+!!<0ex,\axis>{#7}\ar@{#4}_-{#9} (\xend,\yend)*+!!<0ex,\axis>{#8}%
  \fi%
 \else \if#3m%
    \setbox0\hbox{$#9$}%
   \ifdim \wd0=0pt%
     \POS(#1,#2)*+!!<0ex,\axis>{#7}\ar@{#4} (\xend,\yend)*+!!<0ex,\axis>{#8}%
   \else%
     \POS(#1,#2)*+!!<0ex,\axis>{#7}\ar@{#4}|-*+<1pt,4pt>{\labelstyle#9}%
         (\xend,\yend)*+!!<0ex,\axis>{#8}%
   \fi%
 \else \if#3r%
  \ifnum #6<0%
   \POS(#1,#2)*+!!<0ex,\axis>{#7}\ar@{#4}^-{#9} (\xend,\yend)*+!!<0ex,\axis>{#8}%
  \else%
   \POS(#1,#2)*+!!<0ex,\axis>{#7}\ar@{#4}_-{#9} (\xend,\yend)*+!!<0ex,\axis>{#8}%
  \fi%
 \else \if#3a%
  \ifnum #5>0%
   \POS(#1,#2)*+!!<0ex,\axis>{#7}\ar@{#4}^-{#9} (\xend,\yend)*+!!<0ex,\axis>{#8}%
  \else%
   \POS(#1,#2)*+!!<0ex,\axis>{#7}\ar@{#4}_-{#9} (\xend,\yend)*+!!<0ex,\axis>{#8}%
  \fi%
 \else \if#3b%
  \ifnum #5<0%
   \POS(#1,#2)*+!!<0ex,\axis>{#7}\ar@{#4}^-{#9} (\xend,\yend)*+!!<0ex,\axis>{#8}%
  \else%
   \POS(#1,#2)*+!!<0ex,\axis>{#7}\ar@{#4}_-{#9} (\xend,\yend)*+!!<0ex,\axis>{#8}%
  \fi%
 \else%
   \POS(#1,#2)*+!!<0ex,\axis>{#7}\ar@{#4} (\xend,\yend)*+!!<0ex,\axis>{#8}%
 \fi\fi\fi\fi\fi%
\fi\ignorespaces}%
\def\vect(#1,#2)/#3/<#4,#5>{%
 \xend#1 \yend#2 \advance\xend by #4 \advance\yend by #5%
     \POS(#1,#2)\ar#3 (\xend,\yend)}%
\def\squarepppp(#1,#2)|#3|/#4`#5`#6`#7/<#8>[#9]{%
\xpos#1\ypos#2%
\def\next|##1##2##3##4|{%
 \def\xa{##1}\def\xb{##2}\def\xc{##3}\def\xd{##4}\ignorespaces}%
\next|#3|%
\def\next<##1,##2>{\deltax=##1\deltay=##2\ignorespaces}%
\next<#8>%
\def\next[##1`##2`##3`##4;##5`##6`##7`##8]{%
    \def\nodea{##1}\def\nodeb{##2}\def\nodec{##3}\def\noded{##4}%
    \def\labela{##5}\def\labelb{##6}\def\labelc{##7}\def\labeld{##8}\ignorespaces}%
\next[#9]%
\morphism(\xpos,\ypos)|\xd|/{#7}/<\deltax,0>[\nodec`\noded;\labeld]%
\advance \ypos by \deltay%
\morphism(\xpos,\ypos)|\xb|/{#5}/<0,-\deltay>[\nodea`\nodec;\labelb]%
\morphism(\xpos,\ypos)|\xa|/{#4}/<\deltax,0>[\nodea`\nodeb;\labela]%
 \advance \xpos by \deltax%
\morphism(\xpos,\ypos)|\xc|/{#6}/<0,-\deltay>[\nodeb`\noded;\labelc]%
\ignorespaces}%
\def\square{\ifnextchar({\squarep}{\squarep(0,0)}}%
\def\squarep(#1){\ifnextchar|{\squarepp(#1)}{\squarepp(#1)|alrb|}}%
\def\squarepp(#1)|#2|{\ifnextchar/{\squareppp(#1)|#2|}%
    {\squareppp(#1)|#2|/>`>`>`>/}}%
\def\squareppp(#1)|#2|/#3`#4`#5`#6/{%
    \ifnextchar<{\squarepppp(#1)|#2|/#3`#4`#5`#6/}%
    {\squarepppp(#1)|#2|/#3`#4`#5`#6/<\default,\default>}}%
\def\ptrianglepppp(#1,#2)|#3|/#4`#5`#6/<#7>[#8]{%
\xpos#1\ypos#2%
\def\next|##1##2##3|{\def\xa{##1}\def\xb{##2}\def\xc{##3}}%
\next|#3|%
\def\next<##1,##2>{\deltax=##1\deltay=##2\ignorespaces}%
\next<#7>%
\def\next[##1`##2`##3;##4`##5`##6]{%
    \def\nodea{##1}\def\nodeb{##2}\def\nodec{##3}%
    \def\labela{##4}\def\labelb{##5}\def\labelc{##6}}%
\next[#8]%
\advance\ypos by \deltay%
\morphism(\xpos,\ypos)|\xa|/{#4}/<\deltax,0>[\nodea`\nodeb;\labela]%
\morphism(\xpos,\ypos)|\xb|/{#5}/<0,-\deltay>[\nodea`\nodec;\labelb]%
\advance\xpos by \deltax%
\morphism(\xpos,\ypos)|\xc|/{#6}/<-\deltax,-\deltay>[\nodeb`\nodec;\labelc]%
\ignorespaces}%
\def\qtrianglepppp(#1,#2)|#3|/#4`#5`#6/<#7>[#8]{%
\xpos#1\ypos#2%
\def\next|##1##2##3|{\def\xa{##1}\def\xb{##2}\def\xc{##3}}%
\next|#3|%
\def\next<##1,##2>{\deltax=##1\deltay=##2\ignorespaces}%
\next<#7>%
\def\next[##1`##2`##3;##4`##5`##6]{%
    \def\nodea{##1}\def\nodeb{##2}\def\nodec{##3}%
    \def\labela{##4}\def\labelb{##5}\def\labelc{##6}}%
\next[#8]%
\advance\ypos by \deltay%
\morphism(\xpos,\ypos)|\xa|/{#4}/<\deltax,0>[\nodea`\nodeb;\labela]%
\morphism(\xpos,\ypos)|\xb|/{#5}/<\deltax,-\deltay>[\nodea`\nodec;\labelb]%
\advance\xpos by \deltax%
\morphism(\xpos,\ypos)|\xc|/{#6}/<0,-\deltay>[\nodeb`\nodec;\labelc]%
\ignorespaces}%
\def\dtrianglepppp(#1,#2)|#3|/#4`#5`#6/<#7>[#8]{%
\xpos#1\ypos#2%
\def\next|##1##2##3|{\def\xa{##1}\def\xb{##2}\def\xc{##3}}%
\next|#3|%
\def\next<##1,##2>{\deltax=##1\deltay=##2\ignorespaces}%
\next<#7>%
\def\next[##1`##2`##3;##4`##5`##6]{%
    \def\nodea{##1}\def\nodeb{##2}\def\nodec{##3}%
    \def\labela{##4}\def\labelb{##5}\def\labelc{##6}}%
\next[#8]%
\morphism(\xpos,\ypos)|\xc|/{#6}/<\deltax,0>[\nodeb`\nodec;\labelc]%
\advance\ypos by \deltay\advance \xpos by \deltax%
\morphism(\xpos,\ypos)|\xa|/{#4}/<-\deltax,-\deltay>[\nodea`\nodeb;\labela]%
\morphism(\xpos,\ypos)|\xb|/{#5}/<0,-\deltay>[\nodea`\nodec;\labelb]%
\ignorespaces}%
\def\btrianglepppp(#1,#2)|#3|/#4`#5`#6/<#7>[#8]{%
\xpos#1\ypos#2%
\def\next|##1##2##3|{\def\xa{##1}\def\xb{##2}\def\xc{##3}}%
\next|#3|%
\def\next<##1,##2>{\deltax=##1\deltay=##2\ignorespaces}%
\next<#7>%
\def\next[##1`##2`##3;##4`##5`##6]{%
    \def\nodea{##1}\def\nodeb{##2}\def\nodec{##3}%
    \def\labela{##4}\def\labelb{##5}\def\labelc{##6}}%
\next[#8]%
\morphism(\xpos,\ypos)|\xc|/{#6}/<\deltax,0>[\nodeb`\nodec;\labelc]%
\advance\ypos by \deltay%
\morphism(\xpos,\ypos)|\xa|/{#4}/<0,-\deltay>[\nodea`\nodeb;\labela]%
\morphism(\xpos,\ypos)|\xb|/{#5}/<\deltax,-\deltay>[\nodea`\nodec;\labelb]%
\ignorespaces}%
\def\Atrianglepppp(#1,#2)|#3|/#4`#5`#6/<#7>[#8]{%
\xpos#1\ypos#2%
\def\next|##1##2##3|{\def\xa{##1}\def\xb{##2}\def\xc{##3}}%
\next|#3|%
\def\next<##1,##2>{\deltax=##1\deltay=##2\ignorespaces}%
\next<#7>%
\def\next[##1`##2`##3;##4`##5`##6]{%
    \def\nodea{##1}\def\nodeb{##2}\def\nodec{##3}%
    \def\labela{##4}\def\labelb{##5}\def\labelc{##6}}%
\next[#8]%
\multiply\deltax by 2%
\morphism(\xpos,\ypos)|\xc|/{#6}/<\deltax,0>[\nodeb`\nodec;\labelc]%
\divide\deltax by 2%
\advance\ypos by \deltay\advance\xpos by \deltax%
\morphism(\xpos,\ypos)|\xa|/{#4}/<-\deltax,-\deltay>[\nodea`\nodeb;\labela]%
\morphism(\xpos,\ypos)|\xb|/{#5}/<\deltax,-\deltay>[\nodea`\nodec;\labelb]%
\ignorespaces}%
\def\Vtrianglepppp(#1,#2)|#3|/#4`#5`#6/<#7>[#8]{%
\xpos#1\ypos#2%
\def\next|##1##2##3|{\def\xa{##1}\def\xb{##2}\def\xc{##3}}%
\next|#3|%
\def\next<##1,##2>{\deltax=##1\deltay=##2\ignorespaces}%
\next<#7>%
\def\next[##1`##2`##3;##4`##5`##6]{%
    \def\nodea{##1}\def\nodeb{##2}\def\nodec{##3}%
    \def\labela{##4}\def\labelb{##5}\def\labelc{##6}}%
\next[#8]%
\advance\ypos by \deltay%
\morphism(\xpos,\ypos)|\xb|/{#5}/<\deltax,-\deltay>[\nodea`\nodec;\labelb]%
\multiply\deltax by 2%
\morphism(\xpos,\ypos)|\xa|/{#4}/<\deltax,0>[\nodea`\nodeb;\labela]%
\advance\xpos by \deltax \divide \deltax by 2%
\morphism(\xpos,\ypos)|\xc|/{#6}/<-\deltax,-\deltay>[\nodeb`\nodec;\labelc]%
\ignorespaces}%
\def\Ctrianglepppp(#1,#2)|#3|/#4`#5`#6/<#7>[#8]{%
\xpos#1\ypos#2%
\def\next|##1##2##3|{\def\xa{##1}\def\xb{##2}\def\xc{##3}}%
\next|#3|%
\def\next<##1,##2>{\deltax=##1\deltay=##2\ignorespaces}%
\next<#7>%
\def\next[##1`##2`##3;##4`##5`##6]{%
    \def\nodea{##1}\def\nodeb{##2}\def\nodec{##3}%
    \def\labela{##4}\def\labelb{##5}\def\labelc{##6}}%
\next[#8]%
\advance \ypos by \deltay%
\morphism(\xpos,\ypos)|\xc|/{#6}/<\deltax,-\deltay>[\nodeb`\nodec;\labelc]%
\advance\ypos by \deltay \advance \xpos by \deltax%
\morphism(\xpos,\ypos)|\xa|/{#4}/<-\deltax,-\deltay>[\nodea`\nodeb;\labela]%
\multiply\deltay by 2%
\morphism(\xpos,\ypos)|\xb|/{#5}/<0,-\deltay>[\nodea`\nodec;\labelb]%
\ignorespaces}%
\def\Dtrianglepppp(#1,#2)|#3|/#4`#5`#6/<#7>[#8]{%
\xpos#1\ypos#2%
\def\next|##1##2##3|{\def\xa{##1}\def\xb{##2}\def\xc{##3}}%
\next|#3|%
\def\next<##1,##2>{\deltax=##1\deltay=##2\ignorespaces}%
\next<#7>%
\def\next[##1`##2`##3;##4`##5`##6]{%
    \def\nodea{##1}\def\nodeb{##2}\def\nodec{##3}%
    \def\labela{##4}\def\labelb{##5}\def\labelc{##6}}%
\next[#8]%
\advance\xpos by \deltax \advance\ypos by \deltay%
\morphism(\xpos,\ypos)|\xc|/{#6}/<-\deltax,-\deltay>[\nodeb`\nodec;\labelc]%
\advance\xpos by -\deltax \advance\ypos by \deltay%
\morphism(\xpos,\ypos)|\xb|/{#5}/<\deltax,-\deltay>[\nodea`\nodeb;\labelb]%
\multiply \deltay by 2%
\morphism(\xpos,\ypos)|\xa|/{#4}/<0,-\deltay>[\nodea`\nodec;\labela]%
\ignorespaces}%
\def\ptrianglep(#1){\ifnextchar|{\ptrianglepp(#1)}{\ptrianglepp(#1)|alr|}}%
\def\ptrianglepp(#1)|#2|{\ifnextchar/{\ptriangleppp(#1)|#2|}%
    {\ptriangleppp(#1)|#2|/>`>`>/}}%
\def\ptriangleppp(#1)|#2|/#3`#4`#5/{%
    \ifnextchar<{\ptrianglepppp(#1)|#2|/#3`#4`#5/}%
    {\ptrianglepppp(#1)|#2|/#3`#4`#5/<\default,\default>}}%
\def\qtrianglep(#1){\ifnextchar|{\qtrianglepp(#1)}{\qtrianglepp(#1)|alr|}}%
\def\qtrianglepp(#1)|#2|{\ifnextchar/{\qtriangleppp(#1)|#2|}%
    {\qtriangleppp(#1)|#2|/>`>`>/}}%
\def\qtriangleppp(#1)|#2|/#3`#4`#5/{%
    \ifnextchar<{\qtrianglepppp(#1)|#2|/#3`#4`#5/}%
    {\qtrianglepppp(#1)|#2|/#3`#4`#5/<\default,\default>}}%
\def\dtrianglep(#1){\ifnextchar|{\dtrianglepp(#1)}{\dtrianglepp(#1)|lrb|}}%
\def\dtrianglepp(#1)|#2|{\ifnextchar/{\dtriangleppp(#1)|#2|}%
    {\dtriangleppp(#1)|#2|/>`>`>/}}%
\def\dtriangleppp(#1)|#2|/#3`#4`#5/{%
    \ifnextchar<{\dtrianglepppp(#1)|#2|/#3`#4`#5/}%
    {\dtrianglepppp(#1)|#2|/#3`#4`#5/<\default,\default>}}%
\def\btrianglep(#1){\ifnextchar|{\btrianglepp(#1)}{\btrianglepp(#1)|lrb|}}%
\def\btrianglepp(#1)|#2|{\ifnextchar/{\btriangleppp(#1)|#2|}%
    {\btriangleppp(#1)|#2|/>`>`>/}}%
\def\btriangleppp(#1)|#2|/#3`#4`#5/{%
    \ifnextchar<{\btrianglepppp(#1)|#2|/#3`#4`#5/}%
    {\btrianglepppp(#1)|#2|/#3`#4`#5/<\default,\default>}}%
\def\Atrianglep(#1){\ifnextchar|{\Atrianglepp(#1)}{\Atrianglepp(#1)|lrb|}}%
\def\Atrianglepp(#1)|#2|{\ifnextchar/{\Atriangleppp(#1)|#2|}%
    {\Atriangleppp(#1)|#2|/>`>`>/}}%
\def\Atriangleppp(#1)|#2|/#3`#4`#5/{%
    \ifnextchar<{\Atrianglepppp(#1)|#2|/#3`#4`#5/}%
    {\Atrianglepppp(#1)|#2|/#3`#4`#5/<\default,\default>}}%
\def\Vtriangle{\ifnextchar({\Vtrianglep}{\Vtrianglep(0,0)}}%
\def\Vtrianglep(#1){\ifnextchar|{\Vtrianglepp(#1)}{\Vtrianglepp(#1)|alb|}}%
\def\Vtrianglepp(#1)|#2|{\ifnextchar/{\Vtriangleppp(#1)|#2|}%
    {\Vtriangleppp(#1)|#2|/>`>`>/}}%
\def\Vtriangleppp(#1)|#2|/#3`#4`#5/{%
    \ifnextchar<{\Vtrianglepppp(#1)|#2|/#3`#4`#5/}%
    {\Vtrianglepppp(#1)|#2|/#3`#4`#5/<\default,\default>}}%
\def\Ctrianglep(#1){\ifnextchar|{\Ctrianglepp(#1)}{\Ctrianglepp(#1)|arb|}}%
\def\Ctrianglepp(#1)|#2|{\ifnextchar/{\Ctriangleppp(#1)|#2|}%
    {\Ctriangleppp(#1)|#2|/>`>`>/}}%
\def\Ctriangleppp(#1)|#2|/#3`#4`#5/{%
    \ifnextchar<{\Ctrianglepppp(#1)|#2|/#3`#4`#5/}%
    {\Ctrianglepppp(#1)|#2|/#3`#4`#5/<\default,\default>}}%
\def\Dtrianglep(#1){\ifnextchar|{\Dtrianglepp(#1)}{\Dtrianglepp(#1)|lab|}}%
\def\Dtrianglepp(#1)|#2|{\ifnextchar/{\Dtriangleppp(#1)|#2|}%
    {\Dtriangleppp(#1)|#2|/>`>`>/}}%
\def\Dtriangleppp(#1)|#2|/#3`#4`#5/{%
    \ifnextchar<{\Dtrianglepppp(#1)|#2|/#3`#4`#5/}%
    {\Dtrianglepppp(#1)|#2|/#3`#4`#5/<\default,\default>}}%
\def\Atrianglepairpppp(#1)|#2|/#3`#4`#5`#6`#7/<#8>[#9]{%
\def\next(##1,##2){\xpos##1\ypos##2}%
\next(#1)%
\def\next|##1##2##3##4##5|{\def\xa{##1}\def\xb{##2}%
\def\xc{##3}\def\xd{##4}\def\xe{##5}}%
\next|#2|%
\def\next<##1,##2>{\deltax=##1\deltay=##2\ignorespaces}%
\next<#8>%
\def\next[##1`##2`##3`##4;##5`##6`##7`##8`##9]{%
 \def\nodea{##1}\def\nodeb{##2}\def\nodec{##3}\def\noded{##4}%
 \def\labela{##5}\def\labelb{##6}\def\labelc{##7}\def\labeld{##8}\def\labele{##9}}%
\next[#9]%
\morphism(\xpos,\ypos)|\xd|/{#6}/<\deltax,0>[\nodeb`\nodec;\labeld]%
\advance\xpos by \deltax%
\morphism(\xpos,\ypos)|\xe|/{#7}/<\deltax,0>[\nodec`\noded;\labele]%
\advance\ypos by \deltay%
\morphism(\xpos,\ypos)|\xa|/{#3}/<-\deltax,-\deltay>[\nodea`\nodeb;\labela]%
\morphism(\xpos,\ypos)|\xb|/{#4}/<0,-\deltay>[\nodea`\nodec;\labelb]%
\morphism(\xpos,\ypos)|\xc|/{#5}/<\deltax,-\deltay>[\nodea`\noded;\labelc]%
\ignorespaces}%
\def\Vtrianglepairpppp(#1)|#2|/#3`#4`#5`#6`#7/<#8>[#9]{%
\def\next(##1,##2){\xpos##1\ypos##2}%
\next(#1)%
\def\next|##1##2##3##4##5|{\def\xa{##1}\def\xb{##2}%
\def\xc{##3}\def\xd{##4}\def\xe{##5}}%
\next|#2|%
\def\next<##1,##2>{\deltax=##1\deltay=##2\ignorespaces}%
\next<#8>%
\def\next[##1`##2`##3`##4;##5`##6`##7`##8`##9]{%
 \def\nodea{##1}\def\nodeb{##2}\def\nodec{##3}\def\noded{##4}%
 \def\labela{##5}\def\labelb{##6}\def\labelc{##7}\def\labeld{##8}\def\labele{##9}}%
\next[#9]%
\advance\ypos by \deltay%
\morphism(\xpos,\ypos)|\xa|/{#3}/<\deltax,0>[\nodea`\nodeb;\labela]%
\morphism(\xpos,\ypos)|\xc|/{#5}/<\deltax,-\deltay>[\nodea`\noded;\labelc]%
\advance\xpos by \deltax%
\morphism(\xpos,\ypos)|\xb|/{#4}/<\deltax,0>[\nodeb`\nodec;\labelb]%
\morphism(\xpos,\ypos)|\xd|/{#6}/<0,-\deltay>[\nodeb`\noded;\labeld]%
\advance\xpos by \deltax%
\morphism(\xpos,\ypos)|\xe|/{#7}/<-\deltax,-\deltay>[\nodec`\noded;\labele]%
\ignorespaces}%
\def\Ctrianglepairpppp(#1)|#2|/#3`#4`#5`#6`#7/<#8>[#9]{%
\def\next(##1,##2){\xpos##1\ypos##2}%
\next(#1)%
\def\next|##1##2##3##4##5|{\def\xa{##1}\def\xb{##2}%
\def\xc{##3}\def\xd{##4}\def\xe{##5}}%
\next|#2|%
\def\next<##1,##2>{\deltax=##1\deltay=##2\ignorespaces}%
\next<#8>%
\def\next[##1`##2`##3`##4;##5`##6`##7`##8`##9]{%
 \def\nodea{##1}\def\nodeb{##2}\def\nodec{##3}\def\noded{##4}%
 \def\labela{##5}\def\labelb{##6}\def\labelc{##7}\def\labeld{##8}\def\labele{##9}}%
\next[#9]%
\advance\ypos by \deltay%
\morphism(\xpos,\ypos)|\xe|/{#7}/<0,-\deltay>[\nodec`\noded;\labele]%
\advance\xpos by -\deltax%
\morphism(\xpos,\ypos)|\xc|/{#5}/<\deltax,0>[\nodeb`\nodec;\labelc]%
\morphism(\xpos,\ypos)|\xd|/{#6}/<\deltax,-\deltay>[\nodeb`\noded;\labeld]%
\advance\ypos by \deltay%
\advance\xpos by \deltax%
\morphism(\xpos,\ypos)|\xa|/{#3}/<-\deltax,-\deltay>[\nodea`\nodeb;\labela]%
\morphism(\xpos,\ypos)|\xb|/{#4}/<0,-\deltay>[\nodea`\nodec;\labelb]%
\ignorespaces}%
\def\Dtrianglepairpppp(#1)|#2|/#3`#4`#5`#6`#7/<#8>[#9]{%
\def\next(##1,##2){\xpos##1\ypos##2}%
\next(#1)%
\def\next|##1##2##3##4##5|{\def\xa{##1}\def\xb{##2}%
\def\xc{##3}\def\xd{##4}\def\xe{##5}}%
\next|#2|%
\def\next<##1,##2>{\deltax=##1\deltay=##2\ignorespaces}%
\next<#8>%
\def\next[##1`##2`##3`##4;##5`##6`##7`##8`##9]{%
 \def\nodea{##1}\def\nodeb{##2}\def\nodec{##3}\def\noded{##4}%
 \def\labela{##5}\def\labelb{##6}\def\labelc{##7}\def\labeld{##8}\def\labele{##9}}%
\next[#9]%
\advance\ypos by \deltay%
\morphism(\xpos,\ypos)|\xc|/{#5}/<\deltax,0>[\nodeb`\nodec;\labelc]%
\morphism(\xpos,\ypos)|\xd|/{#6}/<0,-\deltay>[\nodeb`\noded;\labeld]%
\advance\ypos by \deltay%
\morphism(\xpos,\ypos)|\xa|/{#3}/<0,-\deltay>[\nodea`\nodeb;\labela]%
\morphism(\xpos,\ypos)|\xb|/{#4}/<\deltax,-\deltay>[\nodea`\nodec;\labelb]%
\advance\ypos by -\deltay%
\advance\xpos by \deltax%
\morphism(\xpos,\ypos)|\xe|/{#7}/<-\deltax,-\deltay>[\nodec`\noded;\labele]%
\ignorespaces}%
\def\Atrianglepairp(#1){\ifnextchar|{\Atrianglepairpp(#1)}%
{\Atrianglepairpp(#1)|lmrbb|}}%
\def\Atrianglepairpp(#1)|#2|{\ifnextchar/{\Atrianglepairppp(#1)|#2|}%
    {\Atrianglepairppp(#1)|#2|/>`>`>`>`>/}}%
\def\Atrianglepairppp(#1)|#2|/#3`#4`#5`#6`#7/{%
    \ifnextchar<{\Atrianglepairpppp(#1)|#2|/#3`#4`#5`#6`#7/}%
    {\Atrianglepairpppp(#1)|#2|/#3`#4`#5`#6`#7/<\default,\default>}}%
\def\Vtrianglepairp(#1){\ifnextchar|{\Vtrianglepairpp(#1)}%
{\Vtrianglepairpp(#1)|aalmr|}}%
\def\Vtrianglepairpp(#1)|#2|{\ifnextchar/{\Vtrianglepairppp(#1)|#2|}%
    {\Vtrianglepairppp(#1)|#2|/>`>`>`>`>/}}%
\def\Vtrianglepairppp(#1)|#2|/#3`#4`#5`#6`#7/{%
    \ifnextchar<{\Vtrianglepairpppp(#1)|#2|/#3`#4`#5`#6`#7/}%
    {\Vtrianglepairpppp(#1)|#2|/#3`#4`#5`#6`#7/<\default,\default>}}%
\def\Ctrianglepairp(#1){\ifnextchar|{\Ctrianglepairpp(#1)}%
{\Ctrianglepairpp(#1)|lrmlr|}}%
\def\Ctrianglepairpp(#1)|#2|{\ifnextchar/{\Ctrianglepairppp(#1)|#2|}%
    {\Ctrianglepairppp(#1)|#2|/>`>`>`>`>/}}%
\def\Ctrianglepairppp(#1)|#2|/#3`#4`#5`#6`#7/{%
    \ifnextchar<{\Ctrianglepairpppp(#1)|#2|/#3`#4`#5`#6`#7/}%
    {\Ctrianglepairpppp(#1)|#2|/#3`#4`#5`#6`#7/<\default,\default>}}%
\def\Dtrianglepairp(#1){\ifnextchar|{\Dtrianglepairpp(#1)}%
{\Dtrianglepairpp(#1)|lrmlr|}}%
\def\Dtrianglepairpp(#1)|#2|{\ifnextchar/{\Dtrianglepairppp(#1)|#2|}%
    {\Dtrianglepairppp(#1)|#2|/>`>`>`>`>/}}%
\def\Dtrianglepairppp(#1)|#2|/#3`#4`#5`#6`#7/{%
    \ifnextchar<{\Dtrianglepairpppp(#1)|#2|/#3`#4`#5`#6`#7/}%
    {\Dtrianglepairpppp(#1)|#2|/#3`#4`#5`#6`#7/<\default,\default>}}%
\def\pplace[#1](#2,#3)[#4]{\POS(#2,#3)*+!!<0ex,\axis>!#1{#4}\ignorespaces}%
\def\cplace(#1,#2)[#3]{\POS(#1,#2)*+!!<0ex,\axis>{#3}\ignorespaces}%
\def\place{\ifnextchar[{\pplace}{\cplace}}%
\def\pullback#1]#2]{\square#1]\trident#2]\ignorespaces}%
\def\tridentppp|#1#2#3|/#4`#5`#6/<#7,#8>[#9]{%
\def\next[##1;##2`##3`##4]{\def\nodee{##1}\def\labele{##2}%
   \def\labelf{##3}\def\labelg{##4}}%
\next[#9]%
\advance \xpos by -\deltax%
\advance \xpos by -#7\advance \ypos by #8%
\advance\deltax by #7%
\morphism(\xpos,\ypos)|#1|/{#4}/<\deltax,-#8>[\nodee`\nodeb;\labele]%
\advance\deltax by -#7%
\morphism(\xpos,\ypos)|#2|/{#5}/<#7,-#8>[\nodee`\nodea;\labelf]%
\advance\deltay by #8%
\morphism(\xpos,\ypos)|#3|/{#6}/<#7,-\deltay>[\nodee`\nodec;\labelg]%
\ignorespaces}%
\def\trident{\ifnextchar|{\tridentp}{\tridentp|amb|}}%
\def\tridentp|#1|{\ifnextchar/{\tridentpp|#1|}{\tridentpp|#1|/{>}`{>}`{>}/}}%
\def\tridentpp|#1|/#2/{\ifnextchar<{\tridentppp|#1|/#2/}%
  {\tridentppp|#1|/#2/<500,500>}}%
\def\setmorphismwidth#1#2#3#4{%
 \setbox0=\hbox{$#1{\labelstyle#3#3}#2$}#4=\wd0%
 \divide #4 by 2 \divide #4 by \ul%
 \advance #4 by 350 \ratchet{#4}{500}}%
\def\setSquarewidth[#1`#2`#3`#4;#5`#6`#7`#8]{%
 \setmorphismwidth{#1}{#2}{#5}{\topw}%
 \setmorphismwidth{#3}{#4}{#8}{\botw}%
\ratchet{\topw}{\botw}}%
\def\Squarepppp(#1)|#2|/#3/<#4>[#5]{%
 \setSquarewidth[#5]%
 \squarepppp(#1)|#2|/#3/<\topw,#4>[#5]%
\ignorespaces}%
\def\Squarep(#1){\ifnextchar|{\Squarepp(#1)}{\Squarepp(#1)|alrb|}}%
\def\Squarepp(#1)|#2|{\ifnextchar/{\Squareppp(#1)|#2|}%
    {\Squareppp(#1)|#2|/>`>`>`>/}}%
\def\Squareppp(#1)|#2|/#3`#4`#5`#6/{%
    \ifnextchar<{\Squarepppp(#1)|#2|/#3`#4`#5`#6/}%
    {\Squarepppp(#1)|#2|/#3`#4`#5`#6/<\default>}}%
\def\hsquarespppp(#1,#2)|#3|/#4/<#5>[#6;#7]{%
\Xpos=#1\Ypos=#2%
\def\next|##1##2##3##4##5##6##7|{%
 \def\Xa{##1}\def\Xb{##2}\def\Xc{##3}\def\Xd{##4}%
 \def\Xe{##5}\def\Xf{##6}\def\Xg{##7}}%
\next|#3|%
\def\next<##1,##2,##3>{\deltaX=##1\deltaXprime=##2\deltaY=##3}%
\next<#5>%
\def\next[##1`##2`##3`##4`##5`##6]{%
 \def\Nodea{##1}\def\Nodeb{##2}\def\Nodec{##3}%
 \def\Noded{##4}\def\Nodee{##5}\def\Nodef{##6}}%
\next[#6]%
\def\next[##1`##2`##3`##4`##5`##6`##7]{%
 \def\Labela{##1}\def\Labelb{##2}\def\Labelc{##3}\def\Labeld{##4}%
 \def\Labele{##5}\def\Labelf{##6}\def\Labelg{##7}}%
\next[#7]%
\dohsquares/#4/}%
\def\dohsquares/#1`#2`#3`#4`#5`#6`#7/{%
\squarepppp(\Xpos,\Ypos)|\Xa\Xc\Xd\Xf|/#1`#3`#4`#6/<\deltaX,\deltaY>%
 [\Nodea`\Nodeb`\Noded`\Nodee;\Labela`\Labelc`\Labeld`\Labelf]%
 \advance \Xpos by \deltaX%
\squarepppp(\Xpos,\Ypos)|\Xb\Xd\Xe\Xg|/#2``#5`#7/<\deltaXprime,\deltaY>%
[\Nodeb`\Nodec`\Nodee`\Nodef;\Labelb``\Labele`\Labelg]%
\ignorespaces}%
\def\hsquaresp(#1){\ifnextchar|{\hsquarespp(#1)}{\hsquarespp%
(#1)|aalmrbb|}}%
\def\hsquarespp(#1)|#2|{\ifnextchar/{\hsquaresppp(#1)|#2|}%
    {\hsquaresppp(#1)|#2|/>`>`>`>`>`>`>/}}%
\def\hsquaresppp(#1)|#2|/#3/{%
    \ifnextchar<{\hsquarespppp(#1)|#2|/#3/}%
    {\hsquarespppp(#1)|#2|/#3/<\default,\default,\default>}}%
\def\hSquarespppp(#1,#2)|#3|/#4/<#5>[#6;#7]{%
\Xpos=#1\Ypos=#2%
\def\next|##1##2##3##4##5##6##7|{%
 \def\Xa{##1}\def\Xb{##2}\def\Xc{##3}\def\Xd{##4}%
 \def\Xe{##5}\def\Xf{##6}\def\Xg{##7}}%
\next|#3|%
\deltaY=#5%
\def\next[##1`##2`##3`##4`##5`##6]{%
 \def\Nodea{##1}\def\Nodeb{##2}\def\Nodec{##3}%
 \def\Noded{##4}\def\Nodee{##5}\def\Nodef{##6}}%
\next[#6]%
\def\next[##1`##2`##3`##4`##5`##6`##7]{%
 \def\Labela{##1}\def\Labelb{##2}\def\Labelc{##3}\def\Labeld{##4}%
 \def\Labele{##5}\def\Labelf{##6}\def\Labelg{##7}}%
\next[#7]%
\dohSquares/#4/}%
\def\dohSquares/#1`#2`#3`#4`#5`#6`#7/{%
\Squarepppp(\Xpos,\Ypos)|\Xa\Xc\Xd\Xf|/#1`#3`#4`#6/<\deltaY>%
 [\Nodea`\Nodeb`\Noded`\Nodee;\Labela`\Labelc`\Labeld`\Labelf]%
 \advance \Xpos by \topw%
\Squarepppp(\Xpos,\Ypos)|\Xb\Xd\Xe\Xg|/#2``#5`#7/<\deltaY>%
[\Nodeb`\Nodec`\Nodee`\Nodef;\Labelb``\Labele`\Labelg]%
\ignorespaces}%
\def\hSquaresp(#1){\ifnextchar|{\hSquarespp(#1)}{\hSquarespp%
(#1)|aalmrbb|}}%
\def\hSquarespp(#1)|#2|{\ifnextchar/{\hSquaresppp(#1)|#2|}%
    {\hSquaresppp(#1)|#2|/>`>`>`>`>`>`>/}}%
\def\hSquaresppp(#1)|#2|/#3/{%
    \ifnextchar<{\hSquarespppp(#1)|#2|/#3/}%
    {\hSquarespppp(#1)|#2|/#3/<\default>}}%
\def\vsquarespppp(#1,#2)|#3|/#4/<#5>[#6;#7]{%
\Xpos=#1\Ypos=#2%
\def\next|##1##2##3##4##5##6##7|{%
 \def\Xa{##1}\def\Xb{##2}\def\Xc{##3}\def\Xd{##4}%
 \def\Xe{##5}\def\Xf{##6}\def\Xg{##7}}%
\next|#3|%
\def\next<##1,##2,##3>{\deltaX=##1\deltaY=##2\deltaYprime=##3}%
\next<#5>%
\def\next[##1`##2`##3`##4`##5`##6]{%
 \def\Nodea{##1}\def\Nodeb{##2}\def\Nodec{##3}%
 \def\Noded{##4}\def\Nodee{##5}\def\Nodef{##6}}%
\next[#6]%
\def\next[##1`##2`##3`##4`##5`##6`##7]{%
 \def\Labela{##1}\def\Labelb{##2}\def\Labelc{##3}\def\Labeld{##4}%
 \def\Labele{##5}\def\Labelf{##6}\def\Labelg{##7}}%
\next[#7]%
\dovsquares/#4/}%
\def\dovsquares/#1`#2`#3`#4`#5`#6`#7/{%
\squarepppp(\Xpos,\Ypos)|\Xd\Xe\Xf\Xg|/`#5`#6`#7/<\deltaX,\deltaYprime>%
[\Nodec`\Noded`\Nodee`\Nodef;`\Labele`\Labelf`\Labelg]%
 \advance\Ypos by \deltaYprime%
\squarepppp(\Xpos,\Ypos)|\Xa\Xb\Xc\Xd|/#1`#2`#3`#4/<\deltaX,\deltaY>%
 [\Nodea`\Nodeb`\Nodec`\Noded;\Labela`\Labelb`\Labelc`\Labeld]%
\ignorespaces}%
\def\vsquaresp(#1){\ifnextchar|{\vsquarespp(#1)}{\vsquarespp%
(#1)|aalmrbb|}}%
\def\vsquarespp(#1)|#2|{\ifnextchar/{\vsquaresppp(#1)|#2|}%
    {\vsquaresppp(#1)|#2|/>`>`>`>`>`>`>/}}%
\def\vsquaresppp(#1)|#2|/#3/{%
    \ifnextchar<{\vsquarespppp(#1)|#2|/#3/}%
    {\vsquarespppp(#1)|#2|/#3/<\default,\default,\default>}}%
\def\vSquarespppp(#1,#2)|#3|/#4/<#5,#6>[#7;#8]{%
\Xpos=#1\Ypos=#2%
\def\next|##1##2##3##4##5##6##7|{%
 \def\Xa{##1}\def\Xb{##2}\def\Xc{##3}\def\Xd{##4}%
 \def\Xe{##5}\def\Xf{##6}\def\Xg{##7}}%
\next|#3|%
\deltaX=#5%
\deltaY=#6%
\def\next[##1`##2`##3`##4`##5`##6]{%
 \def\Nodea{##1}\def\Nodeb{##2}\def\Nodec{##3}%
 \def\Noded{##4}\def\Nodee{##5}\def\Nodef{##6}}%
\next[#7]%
\def\next[##1`##2`##3`##4`##5`##6`##7]{%
 \def\Labela{##1}\def\Labelb{##2}\def\Labelc{##3}\def\Labeld{##4}%
 \def\Labele{##5}\def\Labelf{##6}\def\Labelg{##7}}%
\next[#8]%
\dovSquares/#4/\ignorespaces}%
\def\dovSquares/#1`#2`#3`#4`#5`#6`#7/{%
\setmorphismwidth{\Nodea}{\Nodeb}{\Labela}{\topw}%
\setmorphismwidth{\Nodec}{\Noded}{\Labeld}{\botw}%
\ratchet{\topw}{\botw}%
\setmorphismwidth{\Nodee}{\Nodef}{\Labelg}{\botw}%
\ratchet{\topw}{\botw}%
\square(\Xpos,\Ypos)|\Xd\Xe\Xf\Xg|/`#5`#6`#7/<\topw,\deltaX>%
 [\Nodec`\Noded`\Nodee`\Nodef;`\Labele`\Labelf`\Labelg]%
\advance \Ypos by \deltaX%
\square(\Xpos,\Ypos)|\Xa\Xb\Xc\Xd|/#1`#2`#3`#4/<\topw,\deltaY>%
 [\Nodea`\Nodeb`\Nodec`\Noded;\Labela`\Labelb`\Labelc`\Labeld]%
}%
\def\vSquaresp(#1){\ifnextchar|{\vSquarespp(#1)}{\vSquarespp%
(#1)|alrmlrb|}}%
\def\vSquarespp(#1)|#2|{\ifnextchar/{\vSquaresppp(#1)|#2|}%
    {\vSquaresppp(#1)|#2|/>`>`>`>`>`>`>/}}%
\def\vSquaresppp(#1)|#2|/#3/{%
    \ifnextchar<{\vSquarespppp(#1)|#2|/#3/}%
    {\vSquarespppp(#1)|#2|/#3/<\default,\default>}}%
\def\osquarepppp(#1)|#2|/#3`#4`#5`#6/<#7>[#8]{\squarepppp%
 (#1)|#2|/#3`#4`#5`#6/<#7>[#8]%
 \let\Nodea\nodea\let\Nodeb\nodeb%
\let\Nodec\nodec\let\Noded\noded\Xpos=\xpos\Ypos=\ypos%
\deltaX=\deltax \deltaY=\deltay \isquare}%
\def\osquarep(#1){\ifnextchar|{\osquarepp(#1)}{\osquarepp(#1)|alrb|}}%
\def\osquarepp(#1)|#2|{\ifnextchar/{\osquareppp(#1)|#2|}%
    {\osquareppp(#1)|#2|/>`>`>`>/}}%
\def\osquareppp(#1)|#2|/#3`#4`#5`#6/{%
    \ifnextchar<{\osquarepppp(#1)|#2|/#3`#4`#5`#6/}%
    {\osquarepppp(#1)|#2|/#3`#4`#5`#6/<1500,1500>}}%
\def\isquarepppp(#1)|#2|/#3`#4`#5`#6/<#7>[#8]{%
 \squarepppp(#1)|#2|/#3`#4`#5`#6/<#7>[#8]%
\ifnextchar|{\cubep}{\cubep|mmmm|}}%
\def\cubep|#1|{\ifnextchar/{\cubepp|#1|}{\cubepp|#1|/>`>`>`>/}}%
\def\isquare{\ifnextchar({\isquarep}{\isquarep(\default,\default)}}%
\def\isquarep(#1){\ifnextchar|{\isquarepp(#1)}{\isquarepp(#1)|alrb|}}%
\def\isquarepp(#1)|#2|{\ifnextchar/{\isquareppp(#1)|#2|}%
    {\isquareppp(#1)|#2|/>`>`>`>/}}%
\def\isquareppp(#1)|#2|/#3`#4`#5`#6/{%
    \ifnextchar<{\isquarepppp(#1)|#2|/#3`#4`#5`#6/}%
    {\isquarepppp(#1)|#2|/#3`#4`#5`#6/<500,500>}}%
\def\cubepp|#1#2#3#4|/#5`#6`#7`#8/[#9]{%
\def\next[##1`##2`##3`##4]{\gdef\Labela{##1}%
\gdef\Labelb{##2}\gdef\Labelc{##3}\gdef\Labeld{##4}}\next[#9]%
\xend\xpos \yend\ypos%
\Xend\xend\advance\Xend by -\Xpos%
\Yend\yend\advance\Yend by -\Ypos%
\domorphism(\Xpos,\Ypos)|#2|/#6/<\Xend,\Yend>[\Nodeb`\nodeb;\Labelb]%
\advance\Xpos by-\deltaX%
\advance\xend by-\deltax%
\Xend\xend\advance\Xend by -\Xpos%
\domorphism(\Xpos,\Ypos)|#1|/#5/<\Xend,\Yend>[\Nodea`\nodea;\Labela]%
\advance\Ypos by-\deltaY%
\advance\yend by-\deltay%
\Yend\yend\advance\Yend by -\Ypos%
\domorphism(\Xpos,\Ypos)|#3|/#7/<\Xend,\Yend>[\Nodec`\nodec;\Labelc]%
\advance\Xpos by\deltaX%
\advance\xend by\deltax%
\Xend\xend\advance\Xend by -\Xpos%
\domorphism(\Xpos,\Ypos)|#4|/#8/<\Xend,\Yend>[\Noded`\noded;\Labeld]%
\ignorespaces}%
\def\setwdth#1#2{\setbox0\hbox{$\labelstyle#1$}\wdth=\wd0%
\setbox0\hbox{$\labelstyle#2$}\ifnum\wdth<\wd0 \wdth=\wd0 \fi}%
\def\topppp/#1/<#2>^#3_#4{\:%
\ifnum#2=0%
   \setwdth{#3}{#4}\deltax=\wdth \divide \deltax by \ul%
   \advance \deltax by \defaultmargin  \ratchet{\deltax}{100}%
\else \deltax #2%
\fi%
\xy\ar@{#1}^{#3}_{#4}(\deltax,0) \endxy%
\:}%
\def\toppp/#1/<#2>^#3{\ifnextchar_{\topppp/#1/<#2>^{#3}}{\topppp/#1/<#2>^{#3}_{}}}%
\def\topp/#1/<#2>{\ifnextchar^{\toppp/#1/<#2>}{\toppp/#1/<#2>^{}}}%
\def\toop/#1/{\ifnextchar<{\topp/#1/}{\topp/#1/<0>}}%
\def\to{\ifnextchar/{\toop}{\toop/>/}}%
\def\twopppp/#1`#2/<#3>^#4_#5{\:%
\ifnum0=#3%
  \setwdth{#4}{#5}\deltax=\wdth \divide \deltax by \ul \advance \deltax%
  by \defaultmargin \ratchet{\deltax}{200}%
\else \deltax#3 \fi%
\xy\ar@{#1}@<2.5pt>^{#4}(\deltax,0)%
\ar@{#2}@<-2.5pt>_{#5}(\deltax,0)\endxy\:}%
\def\twoppp/#1`#2/<#3>^#4{\ifnextchar_{\twopppp/#1`#2/<#3>^{#4}}%
  {\twopppp/#1`#2/<#3>^{#4}_{}}}%
\def\twopp/#1`#2/<#3>{\ifnextchar^{\twoppp/#1`#2/<#3>}{\twoppp/#1`#2/<#3>^{}}}%
\def\twop/#1`#2/{\ifnextchar<{\twopp/#1`#2/}{\twopp/#1`#2/<0>}}%
\def\threeppppp/#1`#2`#3/<#4>^#5|#6_#7{\:%
\ifnum0=#4%
\setbox0\hbox{$\labelstyle#5$}\wdth=\wd0%
\setbox0\hbox{$\labelstyle#6$}\ifnum\wdth<\wd0 \wdth=\wd0 \fi%
\setbox0\hbox{$\labelstyle#7$}\ifnum\wdth<\wd0 \wdth=\wd0 \fi%
\deltax=\wdth \divide \deltax by \ul \advance \deltax by%
\defaultmargin \ratchet{\deltax}{300}%
\else\deltax#4 \fi%
    \xy \ifnum\wd0=0 \ar@{#2}(\deltax,0)%
    \else \ar@{#2}|{#6}(\deltax,0)\fi%
\ar@{#1}@<4.5pt>^{#5}(\deltax,0)%
\ar@{#3}@<-4.5pt>_{#7}(\deltax,0)\endxy\:}%
\def\threepppp/#1`#2`#3/<#4>^#5|#6{\ifnextchar_{\threeppppp%
  /#1`#2`#3/<#4>^{#5}|{#6}}{\threeppppp/#1`#2`#3/<#4>^{#5}|{#6}_{}}}%
\def\threeppp/#1`#2`#3/<#4>^#5{\ifnextchar|{\threepppp%
  /#1`#2`#3/<#4>^{#5}}{\threepppp/#1`#2`#3/<#4>^{#5}|{}}}%
\def\threepp/#1`#2`#3/<#4>{\ifnextchar^{\threeppp/#1`#2`#3/<#4>}%
  {\threeppp/#1`#2`#3/<#4>^{}}}%
\def\threep/#1`#2`#3/{\ifnextchar<{\threepp/#1`#2`#3/}%
  {\threepp/#1`#2`#3/<0>}}%
\def\twoar(#1,#2){{%
 \scalefactor{0.1}%
 \deltax#1\deltay#2%
 \deltaX=\ifnum\deltax<0-\fi\deltax%
 \deltaY=\ifnum\deltay<0-\fi\deltay%
 \Xend\deltax \multiply \Xend by \deltax%
 \Yend\deltay \multiply \Yend by \deltay%
 \advance\Xend by \Yend \multiply \Xend by 3%
 \ifnum \deltaX > \deltaY%
    \multiply \deltaX by 3 \advance \deltaX by \deltaY%
 \else%
    \multiply \deltaY by 3 \advance \deltaX by \deltaY%
 \fi%
 \multiply\deltax by 500%
 \multiply\deltay by 500%
 \xpos\deltax \multiply \xpos by 3 \divide\xpos by \deltaX%
 \Xpos\deltax \multiply \Xpos by \deltaX \divide \Xpos by \Xend%
 \advance \xpos by \Xpos%
 \ypos\deltay \multiply \ypos by 3 \divide\ypos by \deltaX%
 \Ypos\deltay \multiply \Ypos by \deltaX \divide \Ypos by \Xend%
 \advance \ypos by \Ypos%
 \xy \ar@{=>}(\xpos,\ypos) \endxy%
}\ignorespaces}%
\def\iiixiiipppppp(#1,#2)|#3|/#4/<#5>#6<#7>[#8;#9]{%
 \xpos#1\ypos#2\relax%
 \def\next|##1##2##3##4##5##6##7|{\def\xa{##1}\def\xb{##2}%
 \def\xc{##3}\def\xd{##4}\def\xe{##5}\def\xf{##6}\nextt|##7|}%
 \def\nextt|##1##2##3##4##5##6|{\def\xg{##1}\def\xh{##2}%
 \def\xi{##3}\def\xj{##4}\def\xk{##5}\def\xl{##6}}%
 \next|#3|%
 \def\next<##1,##2>{\deltax##1\deltay##2}%
 \next<#5>%
 \def\next<##1,##2>{\deltaX##1\deltaY##2}%
 \next<#7>%
 \def\next##1{\topw##1\relax%
 \ifodd\topw \def\za{}\else\def\za{\relax}\fi \divide\topw by 2
 \ifodd\topw \def\zb{}\else\def\zb{\relax}\fi \divide\topw by 2
 \ifodd\topw \def\zc{}\else\def\zc{\relax}\fi \divide\topw by 2
 \ifodd\topw \def\zd{}\else\def\zd{\relax}\fi \divide\topw by 2
 \ifodd\topw \def\ze{}\else\def\ze{\relax}\fi \divide\topw by 2
 \ifodd\topw \def\zf{}\else\def\zf{\relax}\fi \divide\topw by 2
 \ifodd\topw \def\zg{}\else\def\zg{\relax}\fi \divide\topw by 2
 \ifodd\topw \def\zh{}\else\def\zh{\relax}\fi \divide\topw by 2
 \ifodd\topw \def\zi{}\else\def\zi{\relax}\fi \divide\topw by 2
 \ifodd\topw \def\zj{}\else\def\zj{\relax}\fi \divide\topw by 2
 \ifodd\topw \def\zk{}\else\def\zk{\relax}\fi \divide\topw by 2
 \ifodd\topw \def\zl{}\else\def\zl{\relax}\fi}%
 \next{#6}%
 \def\next[##1`##2`##3`##4`##5`##6`##7`##8`##9]{%
 \def\nodeA{##1}\def\nodeB{##2}\def\nodeC{##3}%
 \def\nodeD{##4}\def\nodeE{##5}\def\nodeF{##6}%
 \def\nodeG{##7}\def\nodeH{##8}\def\nodeI{##9}}%
 \next[#8]%
 \def\next[##1`##2`##3`##4`##5`##6`##7]{%
 \def\labela{##1}\def\labelb{##2}\def\labelc{##3}%
 \def\labeld{##4}\def\labele{##5}\def\labelf{##6}\nextt[##7]}%
 \def\nextt[##1`##2`##3`##4`##5`##6]{%
 \def\labelg{##1}\def\labelh{##2}\def\labeli{##3}%
 \def\labelj{##4}\def\labelk{##5}\def\labell{##6}}%
 \next[#9]%
 \def\next/##1`##2`##3`##4`##5`##6`##7`##8/{%
 \advance\ypos\deltay
    \ifx\zf\empty \morphism(\xpos,\ypos)/<-/<-\deltaX,0>[\nodeD`0;]\fi
 \morphism(\xpos,\ypos)|\xf|/{##6}/<\deltax,0>[\nodeD`\nodeE;\labelf]%
    \advance \xpos\deltax
    \morphism(\xpos,\ypos)|\xg|/{##7}/<\deltax,0>[\nodeE`\nodeF;\labelg]%
    \ifx\zg\empty \advance\xpos \deltax
        \morphism(\xpos,\ypos)<\deltaX,0>[\nodeF`0;]\fi
    \xpos#1 \advance\ypos\deltay
    \ifx\zd\empty \morphism(\xpos,\ypos)/<-/<-\deltaX,0>[\nodeA`0;]\fi
    \ifx\za\empty \morphism(\xpos,\ypos)/<-/<0,\deltaY>[\nodeA`0;]\fi
    \morphism(\xpos,\ypos)|\xa|/{##1}/<\deltax,0>[\nodeA`\nodeB;\labela]%
 \morphism(\xpos,\ypos)|\xc|/{##3}/<0,-\deltay>[\nodeA`\nodeD;\labelc]%
    \advance \xpos\deltax
     \morphism(\xpos,\ypos)|\xb|/{##2}/<\deltax,0>[\nodeB`\nodeC;\labelb]%
     \morphism(\xpos,\ypos)|\xd|/{##4}/<0,-\deltay>[\nodeB`\nodeE;\labeld]%
     \ifx\zb\empty \morphism(\xpos,\ypos)/<-/<0,\deltaY>[\nodeB`0;]\fi
     \advance\xpos\deltax
 \morphism(\xpos,\ypos)|\xd|/{##5}/<0,-\deltay>[\nodeC`\nodeF;\labele]%
     \ifx\zc\empty \morphism(\xpos,\ypos)/<-/<0,\deltaY>[\nodeC`0;]\fi
     \ifx\ze\empty \morphism(\xpos,\ypos)<\deltaX,0>[\nodeC`0;]\fi
   \nextt/##8/}%
 \def\nextt/##1`##2`##3`##4`##5/{%
 \xpos#1\ypos#2\relax%
   \ifx\zh\empty \morphism(\xpos,\ypos)/<-/<-\deltaX,0>[\nodeG`0;]\fi
   \ifx\zj\empty \morphism(\xpos,\ypos)<0,-\deltaY>[\nodeG`0;]\fi
   \morphism(\xpos,\ypos)|\xk|/{##4}/<\deltax,0>[\nodeG`\nodeH;\labelk]%
   \advance\xpos\deltax
   \morphism(\xpos,\ypos)|\xl|/{##5}/<\deltax,0>[\nodeH`\nodeI;\labell]%
   \ifx\zk\empty \morphism(\xpos,\ypos)<0,-\deltaY>[\nodeH`0;]\fi
   \advance\xpos\deltax
   \ifx\zi\empty \morphism(\xpos,\ypos)<\deltaX,0>[\nodeI`0;]\fi
   \ifx\zl\empty \morphism(\xpos,\ypos)<0,-\deltaY>[\nodeI`0;]\fi
   \xpos#1 \advance\ypos\deltay
    \morphism(\xpos,\ypos)|\xh|/{##1}/<0,-\deltay>[\nodeD`\nodeG;\labelh]%
    \advance \xpos\deltax
    \morphism(\xpos,\ypos)|\xi|/{##2}/<0,-\deltay>[\nodeE`\nodeH;\labeli]%
    \advance \xpos\deltax
 \morphism(\xpos,\ypos)|\xj|/{##3}/<0,-\deltay>[\nodeF`\nodeI;\labelj]}%
 \next/#4/\ignorespaces}%
\def\iiixiiip(#1){\ifnextchar|{\iiixiiipp(#1)}%
  {\iiixiiipp(#1)|aalmrmmlmrbb|}}%
\def\iiixiiipp(#1)|#2|{\ifnextchar/{\iiixiiippp(#1)|#2|}%
    {\iiixiiippp(#1)|#2|/>`>`>`>`>`>`>`>`>`>`>`>/}}%
\def\iiixiiippp(#1)|#2|/#3/{%
    \ifnextchar<{\iiixiiipppp(#1)|#2|/#3/}%
    {\iiixiiipppp(#1)|#2|/#3/<\default,\default>}}%
\def\iiixiiipppp(#1)|#2|/#3/<#4>{\ifnextchar[{\iiixiiippppp(#1)|#2|/#3/%
   <#4>0<0,0>}{\iiixiiippppp(#1)|#2|/#3/<#4>}}%
\def\iiixiiippppp(#1)|#2|/#3/<#4>#5{\ifnextchar<%
   {\iiixiiipppppp(#1)|#2|/#3/<#4>{#5}}%
   {\iiixiiipppppp(#1)|#2|/#3/<#4>{#5}<400,400>}}%
\def\iiixiipppppp(#1,#2)|#3|/#4/<#5>#6<#7>[#8;#9]{%
 \xpos#1\ypos#2\relax%
 \def\next|##1##2##3##4##5##6##7|{\def\xa{##1}\def\xb{##2}%
 \def\xc{##3}\def\xd{##4}\def\xe{##5}\def\xf{##6}\def\xg{##7}}%
 \next|#3|%
 \def\next<##1,##2>{\deltax##1\deltay##2}%
 \next<#5>%
 \deltaX#7
 \topw#6
 \def\next{%
 \ifodd\topw \def\za{}\else\def\za{\relax}\fi \divide\topw by 2
 \ifodd\topw \def\zb{}\else\def\zb{\relax}\fi \divide\topw by 2
 \ifodd\topw \def\zc{}\else\def\zc{\relax}\fi \divide\topw by 2
 \ifodd\topw \def\zd{}\else\def\zd{\relax}\fi}%
 \next%
 \def\next[##1`##2`##3`##4`##5`##6]{%
 \def\nodea{##1}\def\nodeb{##2}\def\nodec{##3}%
 \def\noded{##4}\def\nodee{##5}\def\nodef{##6}}%
 \next[#8]%
 \def\next[##1`##2`##3`##4`##5`##6`##7]{%
 \def\labela{##1}\def\labelb{##2}\def\labelc{##3}%
 \def\labeld{##4}\def\labele{##5}\def\labelf{##6}\def\labelg{##7}}%
 \next[#9]%
 \def\next/##1`##2`##3`##4`##5`##6`##7/{%
 {\ifx\zc\empty\advance\xpos -\deltaX
\relax\morphism(\xpos,\ypos)<\deltaX,0>[0`\noded;]\fi}%
 \morphism(\xpos,\ypos)|\xf|/##6/<\deltax,0>[\noded`\nodee;\labelf]%
 \advance\xpos by \deltax%
 \morphism(\xpos,\ypos)|\xg|/##7/<\deltax,0>[\nodee`\nodef;\labelg]%
 {\ifx\zd\empty \advance\xpos by \deltax
\relax  \morphism(\xpos,\ypos)<\deltaX,0>[\nodef`0;]\fi}%
 \advance\xpos by -\deltax  \advance\ypos by \deltay
 {\ifx\za\empty\advance \xpos by -\deltaX
\relax\morphism(\xpos,\ypos)<\deltaX,0>[0`\nodea;]\fi}%
 \morphism(\xpos,\ypos)|\xa|/##1/<\deltax,0>[\nodea`\nodeb;\labela]%
 \morphism(\xpos,\ypos)|\xc|/##3/<0,-\deltay>[\nodea`\noded;\labelc]%
 \advance\xpos by \deltax%
 \morphism(\xpos,\ypos)|\xb|/##2/<\deltax,0>[\nodeb`\nodec;\labelb]%
 \morphism(\xpos,\ypos)|\xd|/##4/<0,-\deltay>[\nodeb`\nodee;\labeld]%
 \advance\xpos by \deltax%
 \morphism(\xpos,\ypos)|\xe|/##5/<0,-\deltay>[\nodec`\nodef;\labele]%
 \ifx\zb\empty\relax \morphism(\xpos,\ypos)<\deltaX,0>[\nodec`0;]\fi}%
 \next/#4/\ignorespaces}%
\def\iiixiip(#1){\ifnextchar|{\iiixiipp(#1)}%
  {\iiixiipp(#1)|aalmrbb|}}%
\def\iiixiipp(#1)|#2|{\ifnextchar/{\iiixiippp(#1)|#2|}%
    {\iiixiippp(#1)|#2|/>`>`>`>`>`>`>/}}%
\def\iiixiippp(#1)|#2|/#3/{%
    \ifnextchar<{\iiixiipppp(#1)|#2|/#3/}%
    {\iiixiipppp(#1)|#2|/#3/<\default,\default>}}%
\def\iiixiipppp(#1)|#2|/#3/<#4>{\ifnextchar[{\iiixiippppp(#1)|#2|/#3/%
   <#4>{0}<0>}{\iiixiippppp(#1)|#2|/#3/<#4>}}%
\def\iiixiippppp(#1)|#2|/#3/<#4>#5{\ifnextchar<%
   {\iiixiipppppp(#1)|#2|/#3/<#4>{#5}}%
   {\iiixiipppppp(#1)|#2|/#3/<#4>{#5}<400>}}%
\def\node#1(#2,#3)[#4]{%
\expandafter\gdef\csname x@#1\endcsname{#2}%
\expandafter\gdef\csname y@#1\endcsname{#3}%
\expandafter\gdef\csname ob@#1\endcsname{#4}%
\ignorespaces}%
\def\arrow{\ifnextchar|{\arrowp}{\arrowp|a|}}%
\def\arrowp|#1|{\ifnextchar/{\arrowpp|#1|}{\arrowpp|#1|/>/}}%
\def\arrowpp|#1|/#2/[#3`#4;#5]{%
\xfinish=\csname x@#4\endcsname%
\yfinish=\csname y@#4\endcsname%
\advance\xfinish by -\csname x@#3\endcsname%
\advance\yfinish by -\csname y@#3\endcsname%
\morphism(\csname x@#3\endcsname,\csname y@#3\endcsname)|#1|/{#2}/%
<\xfinish,\yfinish>[\csname ob@#3\endcsname`\csname ob@#4\endcsname;#5]%
}%
\def\Loop(#1,#2)#3(#4,#5){\POS(#1,#2)*+!!<0ex,\axis>{#3}\ar@(#4,#5)}%
\def\iloop#1(#2,#3){\xy\Loop(0,0)#1(#2,#3)\endxy}%
\newbox\cdotbox
     \let \PATHafterPOS\PATHafterPOS@default%
     \let \arsavedPATHafterPOS@@\relax%
     \let\afterar@@\relax%
\xydef@\endxyobj{\if\inxy@\else\xyerror@{Unexpected \string\endxy}{}\fi%
>  \relax%
>   \dimen@=\Y@max \advance\dimen@-\Y@min%
>   \ifdim\dimen@<\z@ \dimen@=\z@ \Y@min=\z@ \Y@max=\z@ \fi%
>   \dimen@=\X@max \advance\dimen@-\X@min%
>   \ifdim\dimen@<\z@ \dimen@=\z@ \X@min=\z@ \X@max=\z@ \fi%
>   \edef\tmp@{\egroup%
>     \setboxz@h{\kern-\the\X@min \boxz@}%
>     \ht\z@=\the\Y@max \dp\z@=-\the\Y@min \wdz@=\the\dimen@%
>     \noexpand\maybeunraise@ \raise\dimen@\boxz@%
>     \noexpand\recoverXyStyle@ \egroup \noexpand\xy@end%
>     \U@c=\the\Y@max \advance\U@c-\the\Y@c%
>     \D@c=-\the\Y@min \advance\D@c\the\Y@c%
>     \L@c=-\the\X@min  \advance\L@c\the\X@c%
>     \R@c=\the\X@max  \advance\R@c-\the\X@c%
>    }\tmp@}%
\gdef\xymerge@MinMax{}%
\xydef@\twocell{\hbox\bgroup\xysave@MinMax\@twocell}%
\xydef@\uppertwocell{\hbox\bgroup\xysave@MinMax\@uppertwocell}%
\xydef@\lowertwocell{\hbox\bgroup\xysave@MinMax\@lowertwocell}%
\xydef@\compositemap{\hbox\bgroup\xysave@MinMax\@compositemap}%
\xydef@\xysave@MinMax{\xdef\xymerge@MinMax{%
   \noexpand\ifdim\X@max<\the\X@max \X@max=\the\X@max\noexpand\fi%
   \noexpand\ifdim\X@min>\the\X@min \X@min=\the\X@min\noexpand\fi%
   \noexpand\ifdim\Y@max<\the\Y@max \Y@max=\the\Y@max\noexpand\fi%
   \noexpand\ifdim\Y@min>\the\Y@min \Y@min=\the\Y@min\noexpand\fi%
  }}%
\xydef@\drop@Twocell{\boxz@ \xymerge@MinMax}%
\xydef@\twocell@DONE{%
  \edef\tmp@{\egroup%
   \X@min=\the\X@min \X@max=\the\X@max%
   \Y@min=\the\Y@min \Y@max=\the\Y@max}\tmp@%
  \L@c=\X@c \advance\L@c-\X@min \R@c=\X@max \advance\R@c-\X@c%
  \D@c=\Y@c \advance\D@c-\Y@min \U@c=\Y@max \advance\U@c-\Y@c%
  \ht\z@=\U@c \dp\z@=\D@c \dimen@=\L@c \advance\dimen@\R@c \wdz@=\dimen@%
  \computeLeftUpness@%
  \setboxz@h{\kern-\X@p \raise-\Y@c\boxz@ }%
  \dimen@=\L@c \advance\dimen@\R@c \wdz@=\dimen@ \ht\z@=\U@c \dp\z@=\D@c%
  \Edge@c={\rectangleEdge}\Invisible@false \Hidden@false%
  \edef\Drop@@{\noexpand\drop@Twocell%
   \noexpand\def\noexpand\Leftness@{\Leftness@}%
   \noexpand\def\noexpand\Upness@{\Upness@}}%
  \edef\Connect@@{\noexpand\connect@Twocell%
   \noexpand\ifdim\X@max<\the\X@max \X@max=\the\X@max\noexpand\fi%
   \noexpand\ifdim\X@min>\the\X@min \X@min=\the\X@min\noexpand\fi%
   \noexpand\ifdim\Y@max<\the\Y@max \Y@max=\the\Y@max\noexpand\fi%
   \noexpand\ifdim\Y@min>\the\Y@min \Y@min=\the\Y@min\noexpand\fi }%
  \xymerge@MinMax%
}%
\let\mod\undefined
\DeclareMathOperator{\mod}{mod}
\newcommand{\V}{\mathcal V}
\newcommand{\W}{\mathcal W}
\newcommand{\KK}{\mathbb K}
\newcommand{\Vect}{{\bf Vec}}
\newcommand{\Hmod}{H\text-{\bf mod}}
\newcommand{\Jmod}{J\text-{\bf mod}}
\newcommand{\Mod}{{\bf Mod}}
\newcommand{\Cat}{{\bf Cat}}
\newcommand{\ba}{{\bf ba}\,}
\newcommand{\ha}{{\bf ha}\,}
\newcommand{\alg}{{\bf alg}\,}
\newcommand{\wba}{{\bf wba}\,}
\newcommand{\wha}{{\bf wha}\,}
\newcommand{\fm}{{\bf fmon}\,}
\newcommand{\sfm}{{\bf sfmon}}
\newcommand{\str}{{\bf strmon}}
\newcommand{\too}{\longrightarrow}
\newcommand{\Too}{\Longrightarrow}
\newcommand{\oto}[1]{\stackrel{\underrightarrow{\stackrel{#1}{\,\,\,\,\hphantom{\too}}}}{}}  
\newcommand{\loto}[1]{\stackrel{\underleftarrow{\stackrel{#1}{\,\,\,\,\hphantom{\too}}}}{}}  
\newcommand{\ladj}{\dashv}
\newcommand{\op}{{\rm op}}
\newcommand{\rulaw}{\simeq}
\newcommand{\lulaw}{\simeq}
\newcommand{\fullwidth}[1]{\widthimage{0.89}{#1}}
\newcommand{\widthimage}[2]{\begin{center}\includegraphics[width=#1\textwidth]{#2}\end{center}}
\newcommand{\widthfigure}[4]{
	\begin{figure}[htbp]
	\begin{center}\includegraphics[width=#1\textwidth]{#2}\end{center}
	\caption{#3}
	\label{#4}
	\end{figure}}
\newcommand{\fullfigure}[3]{
	\begin{figure}[htbp]
	\begin{center}\includegraphics[height=0.905\textheight]{#1}\end{center}
	\caption{#2}
	\label{#3}
	\end{figure}}
\newcommand{\fullwidthfigure}[3]{\widthfigure{0.95}{#1}{#2}{#3}}
\newcommand{\pic}[1]{\begin{array}{c} \includegraphics[scale=0.1]{#1} \end{array}}
\newcommand{\scaledpic}[2]{\begin{array}{c} \includegraphics[scale=#2]{#1} \end{array}}
\newcommand{\adjunction}[4]{

\node ONE(-500,0)[#1]
\node TWO(+500,0)[#2]

\arrow|a|/{@{>}@/^1em/}/[ONE`TWO;#3]
\arrow|b|/{@{>}@/^1em/}/[TWO`ONE;#4]
\place(0,0)[\bot]
}
\newcommand{\recon}[1]{\sfm\!\sswarrow\nolinebreak{#1}}
\newcommand{\reconstar}[1]{\sfm\!^*\!\!\sswarrow{#1}}
\newcommand{\strongrecon}[1]{\str\!\sswarrow{#1}}
\newcommand{\strongreconstar}[1]{\str\!^*\!\!\sswarrow{#1}}
\author{Micah Blake McCurdy\footnote{Saint Mary's University. Partially supported by a Macquarie University Research Excellence Scholarship
(MQRES). This work is substantially the same as parts of the third chapter of the author's Macquarie University Doctoral Thesis, ``Cyclic Star-autonomous Categories and
the Tannaka Construction via Graphical Methods'', completed July 2011}}
\date{12 October 2011}	
\title{Graphical Methods for Tannaka Duality of Weak Bialgebras and Weak Hopf Algebras in Arbitrary Braided Monoidal Categories}
\begin{document}

\maketitle

\abstract{\noindent Tannaka Duality describes the relationship between algebraic objects in a given category and their representations; an important case is that of
Hopf algebras and their categories of representations; these have strong monoidal forgetful ``fibre functors'' to the category of vector spaces.
We simultaneously generalize the theory of Tannaka duality in two ways: first, we replace Hopf algebras with \emph{weak Hopf algebras} and strong monoidal functors
with \emph{separable Frobenius monoidal functors}; second, we replace the category of vector spaces with an arbitrary braided monoidal category. To accomplish this
goal, we introduce a new graphical notation for functors between monoidal categories, using string diagrams with \emph{coloured regions}. Not only does this notation
extend our capacity to give simple proofs of complicated calculations, it makes plain some of the connections between Frobenius monoidal or separable Frobenius monoidal
functors and the topology of the axioms defining certain algebraic structures. Finally, having generalized Tannaka to an arbitrary base category, we briefly discuss
the functoriality of the construction as this base is varied.}

\section{Introduction} 
Broadly speaking, Tannaka duality describes the relationship between algebraic objects and their representations; for an excellent introduction,  
see the survey of Joyal and Street~\cite{JoyalStreet90}. On the one hand, given an algebraic object $H$ in a monoidal category $\V$ (for instance, a Hopf algebra
in the category 
$\Vect_k$ of vector spaces over a field $k$), one can consider the functor which takes the algebraic object to its category of 
representations, $\Hmod$, equipped with its canonical forgetful functor back to $\V$. This process is \emph{representation} and it can be defined in a 
great variety of situations, with very mild assumptions on $\V$. 


 
On the other hand, given a suitable functor $F \colon A \too \V$, we can try to use the properties of $F$ (which of course include those of $A$ and $\V$) 
to build an algebraic object in $\V$; this is what we call \emph{the Tannaka construction}. Historically, the algebraic objects have been considered  
primitive, and this process was called ``reconstruction''; but since it can be considered as an independent question, we discard the prefix ``re-''.

The classical paper of Tannaka~\cite{Tannaka38} describes the reconstruction of a compact group from its representations, and is the starting point for the theory
which bears his name. Crucially, for a given algebraic object, the forgetful functor from its category of representations to $\Vect_k$ is considered the starting
point for the project of reconstruction---such functors are known as ``fibre functors''.

Reconstruction requires more stringent assumptions on $F$---certainly $\V$ must be braided; objects in the image of $F$ must have duals; 
and $\V$ must admit certain ends or coends which cohere with the monoidal structure.

In this paper, we show that the theory of Tannaka duality can be extended to an adjunction between a suitable category of \emph{separable Frobenius monoidal functors} into
an arbitrary base category $\V$ and a suitable category of \emph{weak bialgebras} in $\V$. We describe the restriction of this adjunction to \emph{weak Hopf algebras}; and
we show that our constructions coincide with the existing theory of Tannaka duality where applicable. In a sequel~\cite{McCurdy11Tannaka2} to the present paper, we will 
show that this theory can
be refined to include various sorts of structured weak bialgebras and their correspondingly structured (generalized) fibre functors.

\subsection{Existing work}
Many people have devoted considerable effort to various versions of the Tannaka construction, at various levels of generality. Mostly, attention has been confined to
fibre functors which are \emph{strong monoidal} and which have codomain $\V = \Vect_k$. A landmark paper is that of Ulbrich~\cite{Ulbrich90}, who showed that one can
obtain a Hopf algebra from a strong monoidal functor $A \to \Vect_k$, where $A$ is an autonomous-but-not-necessarily-symmetric monoidal category. The case of 
not-necessarily-coherent strong monoidal functors into $\Vect_k$ has been shown by Majid~\cite{Majid92} to result in a quasi-Hopf algebra in the sense of 
Drinfeld~\cite{Drinfeld89} this was
extended by H\"aring~\cite{Haring-Oldenburg97} to cover the case of not-necessary-coherent weak monoidal functors into $\Vect_k$. The reader should note that the sense of ``weak'' Hopf
algebra in \cite{Haring-Oldenburg97} is slightly different from that of B\"ohm, Nill, and Szlach\'anyi~\cite{BohmNillSzlachanyi99} (whom we follow here); but the core 
idea is the same---namely, that ``weak'' Hopf algebras should be bialgebras in which the unit is not strictly grouplike.

The generalization of the Tannaka
construction to an arbitrary base category $\V$ (instead of merely $\Vect_k$) was done by Schauenburg~\cite{Schauenburg92}, followed slightly later by Majid~\cite{Majid93}.
A more abstract approach to the Tannaka construction, still using strong monoidal fibre functors, was initiated by Day~\cite{Day96}, who considered the case of 
$\V$ a suitable enriched category. This abstract line of thinking was 
extended by McCrudden in~\cite{McCrudden00}~and~\cite{McCrudden02} and more recently by Sch\"appi~\cite{Schaeppi09}.

However, for our purposes, the most closely related existing work is that of Szlach\'anyi~\cite{Szlachanyi05}, who discusses separable Frobenius monoidal functors into 
$\V = \Mod_R$, for $R$ a commutative ring. On the one hand, our work is slightly more general in certain aspects---for instance, we work with braided $\V$, whereas $\Mod_R$ is symmetric.
However, the treatment in~\cite{Szlachanyi05} is much more sophisticated than the approach of the present paper, taking in, as it does, the more general notion of algebroids as well as
tackling the Krein recognition problem, which we do not discuss. Finally, Pfeiffer~\cite{Pfeiffer09} has shown that every
\emph{modular} category admits a generalized fibre functor into the field of endomorphisms of its tensor unit; this functor is separable Frobenius monoidal and he shows that the Tannaka
construction makes it into a weak Hopf algebra of a particular type.

\subsection{Structure}

In Section~\ref{graphical-algebra}, we rehearse the basic algebraic notions of bialgebras, weak bialgebras, Hopf algebras, and weak Hopf algebras, together with the string diagrams which
will be used throughout. In Section~\ref{graphical-functors}, we introduce our new graphical language for functors between monoidal categories which will be the key technical tool
for all of our proofs. In Section~\ref{def-of-tan}, we define the Tannaka construction for separable Frobenius monoidal functors, obtaining weak bialgebras and weak Hopf algebras. 
In Section~\ref{def-of-mod}, we recall the representation theory of weak bialgebras and weak Hopf algebras. In Section~\ref{adjunction}, we show that these constructions 
form an adjunction where the Tannaka construction is left adjoint to representation. Finally, in Section~\ref{change-of-base}, we consider varying the base category through a suitable
2-category of braided monoidal categories.

\section{Graphical Notation for Algebraic Objects}\label{graphical-algebra}

We make extensive use of the now-standard string diagram calculus for depicting morphisms in monoidal categories. Our convention is to depict composition
from left-to-right and to depict the tensor product from top-to-bottom; so for instance we depict a composite $a \tens b \oto{f} c \oto{g} d \tens e$ as:
\[ \includegraphics[width=200pt]{example-string-diagram.pdf} \]

\subsection{Basic Notions}
We recall the notions of \emph{weak bialgebra} and \emph{weak Hopf algebra}, to fix notation.

\begin{definition}[Algebras]
An \emph{algebra} or \emph{monoid} $H$ in a monoidal category $\V$ is an object $H$ equipped with a unit $\eta \colon \boot \too H$ and a multiplication 
$\mu \colon H \tens H \too H$, which must be associative and unital:
\[ \includegraphics[scale=0.2]{algebra-defs.pdf} \]
\end{definition}

%

\begin{definition}[Coalgebras]
Dually, a \emph{coalgebra} or \emph{comonoid} $C$ is an object $C$ of $\V$ equipped with a counit $\epsilon \colon C \too \boot$ and a comultiplication 
$\Delta \colon C \too C \tens C$ and which must be coassociative and counital:
\[ \includegraphics[scale=0.2]{coalgebra-defs.pdf} \]
\end{definition}

\begin{definition}[Convolution]
If $(A,\mu,\eta)$ is an algebra in a monoidal category $\V$, and $(C,\Delta,\epsilon)$ a coalgebra, then the set of arrows $\V(A,C)$ bears a canonical
monoid structure, known as \emph{convolution}, defined by: \[ f \star g = \mu(f \tens g)\Delta \] The neutral element for $\star$ is given by 
$\eta\epsilon$.
\end{definition} 

We can consider an object $H$ which is both an algebra and a coalgebra at once, and we can ask these two structures to cohere in various different
ways. For the moment we consider four such ways:
\subsubsection{Frobenius Algebras}

\begin{definition}[Frobenius Algebras]
An object $H$ equipped with both an algebra and a coalgebra structure is said to be a \emph{Frobenius algebra} if it satisfies:
\[ (H \tens \mu)(\Delta \tens H) = \Delta \mu = (\mu \tens H)(H \tens \Delta) \]
That is:
\[ \includegraphics[scale=0.2]{frobenius-defs.pdf} \]

\noindent A Frobenius algebra for which $\mu \Delta = H$ is said to be \emph{separable}:
\[ \includegraphics[scale=0.2]{separable-frobenius-defs.pdf} \]
\end{definition}
Note that the separability equation is precisely the assertion that the identity $H \colon H \too H$ is
a convolution idempotent $H \star H = H$.

\subsubsection{Bialgebras}

\begin{definition}[The Barbell] Suppose that $H$ is an object in a monoidal category, equipped with an algebra structure $(\mu,\eta)$ and a coalgebra structure $(\Delta,\epsilon)$.
We call the composite $\epsilon\eta$ the \emph{barbell}, because of its depiction: $\includegraphics[scale=0.2]{barbell.pdf}$
\end{definition}

\begin{definition}
An object in a braided category bearing an algebra and coalgebra struture is said to be a \emph{bialgebra} if it 
satisfies the following four axioms:

\begin{eqnarray}
\mbox{The Barbell Axiom:} &\qquad \begin{array}{c}\includegraphics[scale=0.1]{barbell-is-trivial.pdf}\end{array} \label{barbell-axiom} \\
\mbox{The (Strong) Unit Axiom:} &\qquad \begin{array}{c}\includegraphics[scale=0.1]{bifurc-unit.pdf}\end{array} \label{unit-axiom} \\
\mbox{The (Strong) Counit Axiom:} &\qquad \begin{array}{c}\includegraphics[scale=0.1]{bifurc-counit.pdf}\end{array} \label{counit-axiom} \\
\mbox{The Bialgebra Axiom:} &\qquad \begin{array}{c}\includegraphics[scale=0.1]{bialg.pdf}\end{array} \label{bialgebra-axiom}
\end{eqnarray} 

\noindent Note that the empty space on the right-hand side of the Barbell axiom depicts the identity on the tensor unit $\boot \colon \boot \too \boot$.
\end{definition}

\begin{definition} Let $H$ and $J$ be bialgebras in a braided monoidal category $\V$. Define a \emph{(non-weak) morphism of bialgebras} from $H$ to $J$
to be an arrow from $H$ to $J$ which commutes strictly with the multiplication, unit, comultiplication, and counit. In this way we obtain a category of
bialgebras in $\V$ which we denote $\ba \V$. \end{definition}

\subsubsection{Weak Bialgebras}

To move from a non-weak bialgebra to a weak bialgebra, we retain only the Bialgebra Axiom, replacing the other three axioms with weaker versions.
\begin{definition}[Weak Bialgebras]

An object in a braided category bearing an algebra and coalgebra structure is said to be a \emph{weak bialgebra} if it satisfies:
\begin{eqnarray}
\mbox{The Weak Unit Axioms:} &\qquad \begin{array}{c}\includegraphics[scale=0.1]{weak-unit-axioms.pdf}\end{array} \label{weak-unit-axioms} \\
\mbox{The Weak Counit Axioms:} &\qquad \begin{array}{c}\includegraphics[scale=0.1]{weak-counit-axioms.pdf}\end{array} \label{weak-counit-axioms} \\
\mbox{The Bialgebra Axiom:} &\qquad \begin{array}{c}\includegraphics[scale=0.1]{bialg.pdf}\end{array} \label{weak-bialgebra-axiom}
\end{eqnarray} Note that the braiding which occurs in the Weak Unit and Weak Counit Axioms is the inverse of the one which appears in the Bialgebra Axiom.
\end{definition} The notion of weak bialgebra was introduced by B\"ohm, Nill, and Szlach\a'nyi~\cite{BohmNillSzlachanyi99}, but see also the treatment of
Pastro and Street~\cite{PastroStreet09}

We defer discussion of morphisms of weak bialgebra until Section~\ref{extension-to-morphisms}.

\begin{definition} An element $c \colon \boot \too H$ of a bialgebra is said to be \emph{grouplike} if $\Delta c = c \tens c$.\end{definition}

\begin{definition}
An element $c \colon \boot \too H$ of a weak bialgebra $H$ is said to be \emph{almost grouplike} if $\Delta c = (\Delta \eta) \star (c \tens c) = (c \tens c) \star (\Delta \eta)$
\end{definition} Note that, if the weak bialgebra $H$ is in fact non-weak, then $(\Delta \eta) \star c \tens c = (\eta \tens \eta) \star (c \tens c)
= c \tens c$, and the almost grouplike elements of $H$ are, in fact, grouplike. In a bialgebra, the unit is grouplike by hypothesis, but it is
\emph{not} grouplike in a weak bialgebra, merely almost grouplike. Intuitively, we think of almost grouplike elements in a weak bialgebra as those
elements which are ``as grouplike as the unit is''.

\begin{observation}
The monoidal unit $\boot$ bears a canonical (trivial) algebra structure, as well as a trivial coalgebra structure. Since $\V$ is braided,
every tensor power of an algebra bears a canonical induced algebra structure; similarly, tensor powers of coalgebras are naturally also
coalgebras. Hence, we have another way of looking at the unit axioms for weak and non-weak bialgebras. As an algebra, $H \tens H$ has
two distinguished elements, namely, $\eta \tens \eta$ and $\Delta\eta$. In a non-weak bialgebra, we demand that these two be equal, but we
resist making this demand for a weak bialgebra. If $H$ is a weak bialgebra, then there are four distinguished elements of $H \tens H \tens H$,
namely: \[ \eta \tens \eta \tens \eta \qquad \qquad \Delta\eta \tens \eta \qquad \qquad \eta \tens \Delta\eta \qquad \qquad \Delta_3\eta \]
where $\Delta_3$ is the common value of $(\Delta \tens H)\Delta = (H \tens \Delta)\Delta$. Insisting that
these four distinguished elements should be equal is much too strong, instead, the weak unit axioms amount to the following:
\[ (\Delta\eta \tens \eta) \star (\eta \tens \Delta\eta) = \Delta_3\eta = (\eta \tens \Delta\eta) \star (\Delta\eta \tens \eta) \]
Similarly, the weak counit axioms can be given as:
\[ (\epsilon\mu \tens \epsilon) \star (\epsilon \tens \epsilon\mu) = \epsilon\mu_3 = (\epsilon \tens \epsilon\mu) \star (\epsilon\mu \tens \epsilon) \]
Written in this form, as in the graphical form, the duality between the weak unit and weak counit axioms is apparent. In the usual Sweedler notation
for weak bialgebras in $\Vect_k$ (where we adopt the conventional $\eta = 1$), these identities appear as 
$1_1 \tens 1_21_{1'} \tens 1_{2'} = 1_1 \tens 1_2 \tens 1_3 = 1_1 \tens 1_{1'}1_{2} \tens 1_{2'}$ and
$\epsilon(ab_1)\epsilon(b_2c) = \epsilon(abc) = \epsilon(ab_2)\epsilon(b_1c)$, and the duality is obfuscated.
\end{observation}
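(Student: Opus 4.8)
The plan is to unfold both sides of the two displayed convolution identities and check that they reproduce, strand for strand, the graphical Weak Unit and Weak Counit Axioms. First I would record the ingredients: because $\V$ is braided, $H \tens H \tens H$ carries an induced algebra structure (interleave the three coordinates using the braiding and multiply coordinatewise, the hexagon identities making this associative and independent of how the interleaving is performed), and dually an induced coalgebra structure; and the monoidal unit $\boot$ carries only the trivial coalgebra and algebra structures. Consequently the convolution $f \star g = \mu(f \tens g)\Delta$ degenerates at $\boot$: for $f, g \colon \boot \too H \tens H \tens H$ it is simply $\mu_{H \tens H \tens H}\circ(f \tens g)$, and for $f, g \colon H \tens H \tens H \too \boot$ it is simply $(f \tens g)\circ \Delta_{H \tens H \tens H}$.

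For the unit axioms, I would substitute $f = \Delta\eta \tens \eta$, $g = \eta \tens \Delta\eta$ into $\mu_{H \tens H \tens H}\circ(f \tens g)$ and draw the picture. The outer coordinate of each of the two factors is a bare $\eta$, so after coordinatewise multiplication the first and last output strands are unchanged (by unitality) while the middle output strand is the product of the two inner legs of the two copies of $\Delta\eta$. Crucially, every braiding that the induced multiplication forces here is a braiding against a strand emanating from an $\eta$; naturality of the braiding together with $\braid_{\boot,H} = \mathrm{id}$ gives $\braid_{H,H}\circ(\eta \tens H) = \braid^{-1}_{H,H}\circ(\eta \tens H) = H \tens \eta$, so these braidings are trivial on the nose and the handedness convention (the one noted to be inverse to that of the Bialgebra Axiom) plays no role. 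The surviving diagram is exactly the right-hand side of the first equation of~(\ref{weak-unit-axioms}); exchanging $f$ and $g$ produces the right-hand side of the second, and in both cases the common left-hand side is $\Delta_3\eta = (\Delta \tens H)\Delta\eta = (H \tens \Delta)\Delta\eta$ by coassociativity. Thus~(\ref{weak-unit-axioms}) holds precisely when $(\Delta\eta \tens \eta) \star (\eta \tens \Delta\eta) = \Delta_3\eta = (\eta \tens \Delta\eta) \star (\Delta\eta \tens \eta)$, and transcribing this into Sweedler notation ($\eta = 1$, $\Delta 1 = 1_1 \tens 1_2$) yields $1_1 \tens 1_2 1_{1'} \tens 1_{2'} = 1_1 \tens 1_2 \tens 1_3 = 1_1 \tens 1_{1'} 1_2 \tens 1_{2'}$.

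The counit statement follows by formal duality: the Weak Counit Axioms are the Weak Unit Axioms with every arrow reversed and $(\mu,\eta)$ traded for $(\Delta,\epsilon)$, convolution is self-dual under this operation, and the dual sliding identity $(H \tens \epsilon)\circ\braid_{H,H} = \epsilon \tens H$ again annihilates all the braidings; so $(\epsilon\mu \tens \epsilon) \star (\epsilon \tens \epsilon\mu) = \epsilon\mu_3 = (\epsilon \tens \epsilon\mu) \star (\epsilon\mu \tens \epsilon)$, and the duality between the two families of axioms, hidden in the Sweedler expressions $\epsilon(ab_1)\epsilon(b_2c) = \epsilon(abc) = \epsilon(ab_2)\epsilon(b_1c)$, is made manifest. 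One could equally well rerun the unwinding directly, using counitality to straighten the outer coordinates.

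I expect the only real obstacle to be bookkeeping: lining up the unwound convolution diagram with the picture actually drawn in~(\ref{weak-unit-axioms}), and confirming that each braiding appearing there is of the kind the sliding lemma trivializes, so that the identification is genuinely convention-free. Once those two checks are in hand, the equivalence is immediate from the unit, counit, associativity and coassociativity laws alone.
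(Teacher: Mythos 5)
Your overall strategy---unfold the convolution on $H \tens H \tens H$ using the triviality of the (co)algebra structure on $\boot$, straighten the outer coordinates by (co)unitality, and match the result against the pictures in Equations~\ref{weak-unit-axioms} and~\ref{weak-counit-axioms}---is the right one, and it is essentially the only way to verify this observation (the paper itself offers no proof). But there is a concrete false step: the claim that \emph{every} braiding forced by the induced multiplication on $H^{\tens 3}$ lands on a strand emanating from an $\eta$, so that all crossings trivialize and ``the handedness convention plays no role.'' That is true for $(\Delta\eta \tens \eta) \star (\eta \tens \Delta\eta)$: writing the two factors as $(u_1, u_2, \eta)$ and $(\eta, v_1, v_2)$ with $u = \Delta\eta = u_1 \tens u_2$ and $v = \Delta\eta = v_1 \tens v_2$, the interleaving shuffle only ever crosses a unit strand, and the product $(u_1,\, u_2 v_1,\, v_2)$ is planar. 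It is \emph{false} for the other order: $(\eta \tens \Delta\eta) \star (\Delta\eta \tens \eta)$ multiplies $(\eta, u_1, u_2)$ with $(v_1, v_2, \eta)$ to give $(v_1,\, u_1 v_2,\, u_2)$, and here $u_1$ must cross $v_2$---two strands neither of which comes from a unit---before they can be multiplied. Exactly one essential braiding of $H$-strands survives, and dually one essential braiding survives in $(\epsilon \tens \epsilon\mu) \star (\epsilon\mu \tens \epsilon)$. This is precisely the braiding the paper is referring to when it remarks, immediately after Equation~\ref{weak-bialgebra-axiom}, that ``the braiding which occurs in the Weak Unit and Weak Counit Axioms is the inverse of the one which appears in the Bialgebra Axiom''; your conclusion that the identification is convention-free contradicts that remark.

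To close the gap you must fix, once and for all, which of $\braid$ or $\braid^{-1}$ is used to define the induced algebra structure on tensor powers of $H$ (and dually for coalgebras), carry that choice through the unwinding, and check that the one surviving crossing in $(\eta \tens \Delta\eta) \star (\Delta\eta \tens \eta)$ has the same handedness as the crossing actually drawn in Equation~\ref{weak-unit-axioms}. With the opposite convention the convolution identity would assert a genuinely different (and in general inequivalent) axiom in a braided, non-symmetric $\V$. The rest of your argument---the degeneration of convolution at $\boot$, the use of unitality and counitality on the outer coordinates, the appeal to coassociativity for $\Delta_3\eta$, the formal duality giving the counit case, and the translation into Sweedler notation---is correct as written.
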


\subsection{The Canonical Idempotents on a Weak Bialgebra} \label{canonical-idempotents-section}

\begin{definition}
There are four canonical idempotents on a weak bialgebra, namely:
\fullwidth{canonical-idempotents.pdf}
Checking that they are idempotents is an exercise in applying the weak unit and weak counit axioms.
\end{definition}

\begin{definition}
	Let $C$ be a category. The \emph{idempotent-splitting completion} or \emph{Cauchy completion} or \emph{Karoubi envelope} of $C$ is written
	as $QC$. It is defined as having objects pairs $(A,a)$, where $a \colon A \too A$ is an idempotent in $C$, and morphisms 
	$f \colon (A,a) \too (B,b)$, where $f \colon A \too B$ is a morphism in $C$ such that $bfa = f$. Note that the identity on $(A,a)$ is
	the morphism $a \colon A \too A$, not the identity on $A$.
\end{definition}

\begin{proposition} \label{canonical-idempotents}
Let $H$ be a weak bialgebra in a monoidal category $\V$.
As objects in $Q\V$, all four canonical idempotents on $H$ are isomorphic.\end{proposition}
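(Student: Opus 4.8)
The plan is to unwind the definition of $Q\V$. For idempotents $e,e'\colon H\too H$, an isomorphism $(H,e)\iso(H,e')$ in $Q\V$ is exactly a pair of arrows $f,g\colon H\too H$ of $\V$ with $e'fe=f$, $ege'=g$, $gf=e$ and $fg=e'$ (recall that the identity of $(H,e)$ in $Q\V$ is $e$ itself). Setting $f = e'e$ and $g = ee'$, the first two conditions hold automatically by idempotency, and the remaining two collapse to the single pair of \emph{ping-pong identities}
\[ e\,e'\,e = e \qquad\text{and}\qquad e'\,e\,e' = e'. \]
So the proposition reduces entirely to verifying these identities for enough pairs among the four canonical idempotents to connect all four in a chain, since isomorphism in $Q\V$ is an equivalence relation. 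For some pairs one even has the stronger relations $e'e=e$ and $ee'=e'$, expressing that the two idempotents have a common image, and these immediately imply the ping-pong identities.

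Concretely, I would fix an ordering $e_1,e_2,e_3,e_4$ of the four idempotents and establish only $(H,e_1)\iso(H,e_2)$, $(H,e_2)\iso(H,e_3)$, and $(H,e_3)\iso(H,e_4)$, obtaining the rest by composition. Each of these is a purely equational matter: expand $e_i$ and $e_{i+1}$ from their graphical definitions and normalise the composite $e_i e_{i+1} e_i$ using (co)associativity, the bialgebra axiom, and --- the crucial ingredient --- the weak unit and weak counit axioms, which are precisely what allow a doubled unit $\Delta\eunit$ or a doubled counit $\epsilon\emult$ to be absorbed. This is the same style of manipulation the text already describes as ``an exercise in applying the weak unit and weak counit axioms'' when it checks that the four maps are idempotent, so it should go through routinely, if somewhat lengthily.

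The main obstacle is exactly this bookkeeping in the ping-pong step: there are several diagrams to rewrite, and one must keep careful track of which braiding occurs where --- the bialgebra-axiom braiding versus its inverse, as the definitions of the weak (co)unit axioms emphasise --- since using the wrong one makes the needed cancellations fail. It is worth recording, at no extra cost, the sharper conclusion the argument actually produces: the arrows $e_ie_j$ and $e_je_i$ restrict to mutually inverse morphisms between the images of $e_i$ and $e_j$ in $Q\V$, so the four canonical idempotents are not merely abstractly isomorphic but canonically so, which is the form later sections will want to use.
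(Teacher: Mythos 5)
Your proposal is correct and matches the paper's own argument: the paper likewise exhibits a chain of isomorphisms $(H,s)\iso(H,t)\iso(H,z)\iso(H,r)$ whose forward and backward maps are built from the idempotents themselves (indeed, in each link the relations $e'e=e'$ and $ee'=e$ hold, so your $f=e'e$, $g=ee'$ reduce to the paper's explicit choices), and it leaves the same equational verifications via the weak (co)unit axioms to the reader.
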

\begin{proof}
The four maps: \[ (H,s) \oto{t} (H,t) \oto{t} (H,z) \oto{r} (H,r) \oto{r} (H,s) \] are isomorphisms in $Q\V$ with inverses
\[ (H,s) \loto{s} (H,t) \loto{z} (H,z) \loto{z} (H,r) \loto{s} (H,s) \] which may be readily checked by the reader.\end{proof}

\subsubsection{The Internal Separable Frobenius Algebra in a Weak Bialgebra}\label{internal-sepfrob}
It is shown by Schauenburg (Proposition~4.2 of~\cite{Schauenburg03}, see also Pastro and Street~\cite{PastroStreet09}) that a splitting 
of the idempotent $t \colon H \too H$
on a weak Hopf algebra $H$ inherits a separable Frobenius structure from the weak bialgebra structure of $H$. 
\begin{definition}
Since all four canonical idempotents on $H$ are isomorphic as idempotents, we call this splitting 
\emph{the internal separable Frobenius algebra} associated to $H$.\end{definition}

\subsection{Hopf Notions}

\begin{definition}[Hopf Algebras]
A \emph{Hopf algebra} is a bialgebra equipped with an \emph{antipode} $S\colon H \too H$ which is a convolution inverse to the identity; 
that is, such that: \begin{equation} \begin{array}{c} \includegraphics[scale=0.2]{hopf-defs.pdf} \end{array} \label{antipode-axioms} \end{equation}
\end{definition}

\begin{definition} Given two Hopf algebras $H$ and $J$ in a monoidal category $\V$, we define a \emph{morphism of Hopf algebras} from $H$ to $J$ 
to be merely a morphism of their underlying
bialgebras; it can be shown that such morphisms necessarily commute with the antipodes of $H$ and $J$. We obtain in this way a category $\ha \V$
of Hopf algebras in $\V$. \end{definition}

\begin{definition}[Weak Hopf Algebras]
A \emph{weak Hopf algebra} is a weak bialgebra with an \emph{antipode} $S\colon H \too H$, satisfying instead:
\begin{equation} S \star H = r \qquad \qquad S \star H \star S = S \qquad \qquad H \star S = t \label{weak-antipode-axioms} \end{equation}

where $r$ and $t$ are the canonical idempotents mentioned above; graphically:
\[ \begin{array}{c}\includegraphics[scale=0.16]{weak-hopf-defs.pdf} \end{array} \] 
Note that either of $S \star H = r$ or $H \star S = t$ can be combined with the Bialgebra Axiom (Equation~\ref{weak-bialgebra-axiom}) 
to give $H \star S \star H = H$, and so an antipode
on a weak Hopf algebra can be thought of as a well-behaved weak convolution inverse to the identity in the sense of semigroups.
\end{definition}

For emphasis, we will sometimes describe Hopf algebras as ``non-weak'' Hopf algebras. We defer discussion of morphisms between weak Hopf algebras until
Section~\ref{extension-to-morphisms}.

\section{Graphical Notation for Functors}\label{graphical-functors}

We introduce depictions for monoidal and comonoidal structures on functors between monoidal categories. The original notion for graphically 
depicting monoidal functors
as transparent boxes in string diagrams is due to Cockett and Seely~\cite{CockettSeely99}, and has recently been revived and popularized by 
Melli\`es~\cite{Mellies06} with prettier graphics and an
excellent pair of example calculations which nicely show the worth of the notation. However, a small alteration improves the notation considerably.
For a monoidal structure on a functor $f \colon A \too B$, we have a natural family of maps:
$\monbin \colon fx \tens fy \too f(x \tens y)$ and a map $\monnul \colon \boot \too f\boot$, which we notate as follows:
\begin{center}\resizebox{!}{50pt}{\includegraphics{definition-monoidal.pdf}}\end{center} 
Similarly, for a comonoidal structure on $f$, we have maps $\combin \colon f(x \tens y) \too fx \tens fy$ and $\comnul \colon f\boot \too \boot$ which we notate in the 
obvious dual way, as follows:
\begin{center}\resizebox{!}{50pt}{\includegraphics{definition-comonoidal.pdf}}\end{center}
Note that the functor symbol ``$f$'' does \emph{not} appear in the wire labels; after all, its red color identifies it. Furthermore, the
tensor unit $\boot$ is suppressed, as usual. Finally, notice that the naturality of the binary monoidal or comonoidal structure is made obvious by
the depiction of the wires labelled ``$x$'' or ``$y$'' passing unperturbed from left to right.

The structural maps for a monoidal functor are required to be associative:
\[ \includegraphics[scale=0.2]{monoidal-associative.pdf} \]
and unital:
\[ \includegraphics[scale=0.2]{monoidal-unital.pdf} \]
where, once again, the corresponding constraints for a comonoidal functor are exactly the above with composition read right-to-left instead of left-to-right.
Note that flipping these axioms vertically leaves them unchanged.

The above axioms seem to indicate some sort of ``invariance under continuous deformation of functor-regions''. For a functor which is both 
monoidal and comonoidal, pursuing this line of thinking leads one to consider the following pair of axioms:
\fullwidth{frobdef.pdf}
Or, in pasting diagrams:
\begin{equation}\label{frobenius-axioms}
\begin{array}{c}
 \bfig

\vscalefactor{0.8}
\node tr(-500,+250)[fx \tens (fy \tens fz)]
\node br(-500,-250)[(fx \tens fy) \tens fz]
\node top(0,+750)[fx \tens f(y \tens z)]
\node bot(0,-750)[f(x \tens y) \tens fz]
\node tl(+500,+250)[f(x \tens (y \tens z))]
\node bl(+500,-250)[f((x\tens y) \tens z)]

\arrow|l|[tr`br;\delta]
\arrow|l|[top`tr;fx \tens \comonoidal]
\arrow|l|[br`bot;\monoidal \tens fz]
\arrow|r|[bl`bot;\comonoidal]
\arrow|r|[top`tl;\monoidal]
\arrow|r|[tl`bl;f\delta]

\node top(-2000,+750)[f(x \tens y) \tens fz]
\node tr(-1500,+250)[f((x\tens y) \tens z)]
\node br(-1500,-250)[f(x\tens (y\tens z))]
\node bot(-2000,-750)[fx \tens f(y \tens fz)]
\node tl(-2500,+250)[(fx \tens fy) \tens fz]
\node bl(-2500,-250)[fx \tens (fy \tens fz)]

\arrow|r|[tr`br;f\delta]
\arrow|r|[top`tr;\monoidal]
\arrow|r|[br`bot;\comonoidal]
\arrow|l|[bl`bot;fx \tens \monoidal]
\arrow|l|[top`tl;\comonoidal \tens fz]
\arrow|l|[tl`bl;\delta]

\efig \end{array}
\end{equation}

\begin{definition}[Definition~1 of Day and Pastro~\protect\cite{DayPastro08}; see also Definition~6.4 of Egger~\protect\cite{Egger08}]
A functor between monoidal categories bearing a monoidal structure and a comonoidal structure, satisfying Equations~\ref{frobenius-axioms}, is said to be
\emph{Frobenius monoidal}.
\end{definition}

Note that the unadorned ``Frobenius'' has already been used in~\cite{CaenepeelMilitaruZhu97} to mean a functor possessing coinciding left and 
right adjoints; we will have no use of this notion.

Frobenius monoidal functors are so-named because Frobenius monoidal functors from the terminal monoidal category into a category $C$ are in bijection
with Frobenius algebras in $C$. Furthermore, they sport two additional pleasant properties:
\begin{itemize}
\item Every strong monoidal functor is Frobenius monoidal (Proposition~3 of~\cite{DayPastro08});
\item Every Frobenius monoidal functor preserves duals (Theorem~2 of~\cite{DayPastro08}; 
this is a special case of Corollary~A.14 of~\cite{CockettSeely99}).
\end{itemize}

For the moment, let us examine the gap between Frobenius monoidal and strong monoidal functors. To demand that a Frobenius monoidal functor be strong is to
demand the following four conditions:
\begin{equation} \scaledpic{strength-1.pdf}{0.4} \label{sep-condition-bin} \end{equation}
\begin{equation} \scaledpic{strength-2.pdf}{0.4} \label{mix-condition-bin} \end{equation}
\begin{equation} \scaledpic{strength-3.pdf}{0.4} \label{sep-condition-nul} \end{equation}		
\begin{equation} \scaledpic{strength-4.pdf}{0.4} \label{mix-condition-nul} \end{equation}		
where the blank right-hand-side of the bottom equation denotes the identity on the tensor unit. Following the above intuition of ``continuous
deformation of $f$-region'', we see that each condition here fails this intuition. 
Equations~\ref{mix-condition-bin}, \ref{sep-condition-nul}, and \ref{mix-condition-nul} each posit an equality between
two different numbers of ``connected components of $f$-regions''. Equation~\ref{sep-condition-bin} avoids this fault but instead posits 
an equality between a ``simply connected $f$-region'' and a non-simply connected such region---hence, even at this qualitative topological level, 
we see that this condition is unlike the others. Thus, we define:
\begin{definition}[Definition~6.1 of \cite{Szlachanyi05}] A Frobenius monoidal functor is \emph{separable} just when it satisfies Equation~\ref{sep-condition-bin}.\end{definition}

The original motivation for the word ``separable'' comes from the fact that separable Frobenius monoidal functors $1 \too \mathcal C$ correspond to 
separable Frobenius algebras in $\mathcal C$ in the classical sense.
The precise connection between the topology of the functor regions in our depictions and their algebraic properties is spelled out in~\cite{McCurdyStreet10}.

The category of monoidal categories and Frobenius monoidal functors between them we denote by $\fm$; the lluf subcategory of separable
Frobenius monoidal functors by $\sfm$, and the further lluf subcategory of \emph{strong} monoidal functors by $\str$. We shall have no need
of strict monoidal functors.

\section{The Tannaka Construction}\label{def-of-tan}

\begin{definition}
	Let $\V$ be a monoidal category. We say that $\V$ is \emph{monoidally complete} if it is complete as a category, 
	and, for every object $b \in \V$, the functors $b \tens -$ and $- \tens b$ are both continuous.
\end{definition}

\begin{definition}[Functors of Reconstruction Type]
Let $A$ be a small category and let $F \colon A \too \V$ be a functor. We say that $F$ is \emph{of covariant reconstruction type} if the following hold:
\begin{itemize}
	\item $\V$ is monoidally complete;
	\item $\V$ is braided monoidal;
	\item For every $a \in A$, there is a left dual $\prep{(Fa)}$ for $Fa$ in $\V$.
\end{itemize}
\end{definition}

\begin{definition} A functor $F \colon A \too \V$ is of \emph{contravariant} reconstruction type when $F^{\op}$ is of covariant reconstruction type.
\end{definition}

\begin{definition}[Tannaka Objects] Let $F \colon A \too \V$ be a functor of covariant reconstruction type. The \emph{covariant 
Tannaka object associated to $F$} is: \[ \tan F = \int_{a \in A} Fa \tens \prep{(Fa)} \]
\end{definition} 
\noindent Many treatments instead consider functors of contravariant reconstruction type, and form
\[ \cot F = \int^{a \in A} Fa \tens \prep{(Fa)}, \] this is the \emph{contravariant Tannaka object associated to $F$}.
Note that, since $\cot F = \tan F^\op$, we lose no generality by working always with functors of covariant reconstruction type; and we shall do so 
throughout the remainder of this paper.

In this section, we shall prove the following:
\begin{theoremnonumber} Let $F \colon A \too \V$ be a separable Frobenius monoidal functor of covariant reconstruction type. Then $\tan F$ bears the structure of
a weak bialgebra. Moreover, if $A$ is autonomous, then $\tan F$ bears the structure of a weak Hopf algebra.\end{theoremnonumber}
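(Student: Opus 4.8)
The plan is to carry out everything through the universal property of the defining end. Write $T = \tan F = \int_{a} Fa \tens \prep{(Fa)}$, with dinatural projections $\pi_a \colon T \to Fa \tens \prep{(Fa)}$; since $F$ is monoidal, $A$ carries a monoidal structure with unit object $I$. Monoidal completeness of $\V$ — continuity of the tensor in each variable — together with Fubini for ends means that every tensor power $T^{\tens n}$ is again an end, namely $\int_{(a_1,\dots,a_n)} \bigotimes_i \bigl(Fa_i \tens \prep{(Fa_i)}\bigr)$, with projections $\pi_{a_1}\tens\cdots\tens\pi_{a_n}$. Consequently a morphism \emph{into} any $T^{\tens n}$ is exactly a dinatural family of morphisms into the bracketed objects, and equality of two such morphisms can be tested componentwise against the projections; so each structure map, and each axiom, can be handled ``one object of $A$ at a time''. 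The algebra structure comes from the monoidal structure $(\monbin,\monnul)$ of $F$: the $c$-component of $\mu\colon T\tens T\to T$ is defined, on the $(a,b)$-component of $T\tens T$ via the substitution $c\rightsquigarrow a\tens b$, by rearranging with the braiding and applying $\monbin\colon Fa\tens Fb\to F(a\tens b)$ on the covariant legs together with the left dual of $\combin$ on the contravariant legs — here one uses that a Frobenius monoidal $F$ preserves duals, so $\prep{(F(a\tens b))}\iso\prep{(Fb)}\tens\prep{(Fa)}$. Dually, the comonoidal structure $(\combin,\comnul)$ yields the coalgebra structure $(\Delta,\epsilon)$ on $T$, with $\Delta$ re-splitting $\pi_{a\tens b}$ by $\combin$ and the dual of $\monbin$; the unit $\eta$ and counit $\epsilon$ are built from the coevaluations and evaluations of the duals together with the nullary maps $\monnul$ and $\comnul$.

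First I would verify that each of these families genuinely defines a map into (resp. out of) the relevant end — for $\mu$ this means checking that the formula built by the substitution $c\rightsquigarrow a\tens b$ really is a wedge for $\int_c$, which is where naturality of $\monbin,\combin$ and functoriality of $F$ first get used; for $\Delta$ it is the straightforward dinaturality of $\pi_{a\tens b}$ composed with $\combin$. Next, associativity and unitality of $(\mu,\eta)$ reduce componentwise to the associativity and unitality of the monoidal structure of $F$ (the ``deformation of $f$-regions'' axioms of Section~\ref{graphical-functors}), and dually for $(\Delta,\epsilon)$; the Bialgebra Axiom reduces, via naturality, to the compatibility of $\monbin$ with $\combin$, i.e. to (part of) the Frobenius equations~\ref{frobenius-axioms} of $F$. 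The crux is the weak unit and weak counit axioms: here I feed in the \emph{separability} equation~\ref{sep-condition-bin} for $F$ which, being strictly weaker than strongness, produces exactly the \emph{weak} axioms~\ref{weak-unit-axioms}--\ref{weak-counit-axioms} rather than the strong ones~\ref{unit-axiom}--\ref{counit-axiom}; I would also confirm that the braiding appearing there is the inverse of the one in the Bialgebra Axiom, which falls out of how the braidings enter the componentwise calculation. Rendered in the coloured-region calculus of Section~\ref{graphical-functors}, each such verification should become a short ``picture equals picture'' step in which the weak bialgebra axiom for $T$ is literally the same diagram as an axiom for $F$.

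For the second statement, assume $A$ autonomous, so each $a\in A$ has a left dual $\prep{a}$ obeying the snake equations, and $F$ (being Frobenius monoidal) carries these to dualities in $\V$. I would define the antipode $S\colon T\to T$ by the dinatural family whose $a$-component is built from $\pi_{\prep{a}}\colon T\to F(\prep{a})\tens\prep{(F(\prep{a}))}$ and the duality isomorphisms $F(\prep{a})\iso\prep{(Fa)}$ and $\prep{(F(\prep{a}))}\iso Fa$, suitably braided — informally, $S$ ``transposes along the duality of $A$''. It then remains to check the three weak antipode equations $S\star H=r$, $S\star H\star S=S$, $H\star S=t$ of Equation~\ref{weak-antipode-axioms}; componentwise these unwind into the snake equations in $A$ combined with the Frobenius and separability data of $F$, and — this is the point — they deliver precisely the canonical idempotents $r$ and $t$ of Section~\ref{canonical-idempotents-section} on the right-hand sides, not $\eta\epsilon$ as would occur in the non-weak case. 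I expect the two genuine obstacles to be: (i) the bookkeeping of dinaturality and of the braidings and duality isomorphisms, so that every structure map is well-defined and lands in the correct end — in particular the wedge condition for $\mu$; and (ii) pinning down the weak unit/counit and weak antipode identities so that separability of $F$ yields exactly the weak axioms, neither the strong ones (which would need the remaining strength conditions) nor anything too weak to constitute a weak (Hopf) bialgebra.
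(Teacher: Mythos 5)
Your overall architecture---test everything componentwise against the defining end via its universal property (the paper's ``discharged forms'' are exactly this, transposed through the dualities $\prep{(Fa)} \ladj Fa$), build structure maps from $(\monbin,\monnul,\combin,\comnul)$, and let separability account for the axioms coming out weak rather than strong---is the paper's. But there is a genuine error in your definition of the multiplication. For the \emph{covariant} (end-based) Tannaka object, the algebra structure on $\tan F$ is \emph{not} induced by $\monbin$: it is the canonical ``composition'' monoid structure on the endomorphism object $\int_a Fa \tens \prep{(Fa)}$, whose discharged form is ``act twice'' (Equations~\ref{def-of-multiplication-eq} and~\ref{def-of-unit-eq}); it exists for any functor of reconstruction type with no monoidal structure on $A$ or on $F$, and its associativity and unitality are automatic rather than consequences of the coherence of $\monbin$. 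Your prescription---give the $c$-component of $\mu$ only at $c = a \tens b$ by applying $\monbin$ and the dual of $\combin$ to the $(a,b)$-component of $T \tens T$---is the multiplication of the \emph{contravariant} object $\cot F$, where a map out of the coend $\int^{(a,b)}$ into $\int^{c}$ needs only one component per pair. For the end it does not define a morphism at all: a map into $\int_c Fc \tens \prep{(Fc)}$ requires a wedge over \emph{every} $c$, your family is indexed by pairs rather than by a single variable $c$, and (specializing $b$ to the unit of $A$) the formula it would force is visibly not the composition product. Since $\mu$ enters the convolution $\star$, the canonical idempotents $r$ and $t$, and all three weak antipode axioms, this cannot be repaired by bookkeeping; the roles are simply dual to the coend picture you have in mind.

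Relatedly, your assignment of hypotheses to axioms is transposed from what actually happens. In the paper the Bialgebra Axiom (Equation~\ref{weak-bialgebra-axiom}) is precisely where separability (Equation~\ref{sep-condition-bin}) enters, via the composite $\combin$ followed by $\monbin$ on $F(x \tens y)$ being the identity; the Weak Unit Axioms (Equations~\ref{weak-unit-axioms}) use the Frobenius equations~\ref{frobenius-axioms}; and the Weak Counit Axioms (Equations~\ref{weak-counit-axioms}) need only that $F$ is monoidal and comonoidal---an asymmetry that is itself an artefact of the covariant choice. Your antipode (transposition along the duality of $A$ through the functor region) does match Equation~\ref{def-of-antipode-eq}, and your expectation that the antipode identities land on $r$ and $t$ rather than on $\eta\epsilon$ is correct; but verifying them requires the correct $\mu$ and the correct $\Delta$, the latter using \emph{both} structures of $F$ (in discharged form it is $\combin$ after the action after $\monbin$, not $\combin$ alone). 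As written, the proof does not go through.
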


\noindent In a sequel~\cite{McCurdy11Tannaka2} to this paper, we shall give three refinements of this theorem; namely: \begin{itemize}
	\item If $A$ is braided, then $\tan F$ is a braided or quasitriangular weak bialgebra in $\V$, generalizing the notion of quasitriangular bialgebra~\cite{Drinfeld86}.
	\item If $A$ and $\V$ are both tortile categories, then $\tan F$ is a ribbon weak bialgebra in $\V$, generalizing the notion of ribbon bialgebra~\cite{ReshetikhinTuraev90}.
	\item If $A$ is a cyclic category in the sense of~\cite{EggerMcCurdy10} (that is, having isomorphic left and right duals), then $\tan F$ is a
		cyclic weak bialgebra. This last generalizes the existing notion of sovereign bialgebra introduced in~\cite{Bichon01}.
\end{itemize}

\begin{observation}
The object $\tan F$ acts universally on the functor $F$, with action $\alpha \colon \tan F \tens F \too F$ is defined to have components:
\[ \tan F \tens Fx = \left( \int_{a \in A} Fa \tens \prep{(Fa)} \right) \tens Fx \oto{\pi_x \tens Fx} Fx \tens \prep{(Fx)} \tens Fx 
\oto{Fx \tens \epsilon_x} Fx \tens \boot \oto{\rulaw} Fx \]
using the $x$'th projection from the end followed by the counit of the $\prep{(Fx)} \ladj Fx$ adjunction. By ``universality'' here, we mean
that composition with $\alpha$ mediates a bijection between maps $X \too \tan F$ in $\V$ and natural transformations $X \tens F \too F$, which
may be readily verified.
\end{observation}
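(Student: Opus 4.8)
The plan is to realize composition with $\alpha$ as the foot of a chain of bijections, each natural in $X$:
\[ \V(X, \tan F) \;\cong\; \int_{a \in A} \V(X, Fa \tens \prep{(Fa)}) \;\cong\; \int_{a \in A} \V(X \tens Fa, Fa) \;\cong\; \{\,\text{natural transformations } X \tens F \too F\,\}. \]
The leftmost bijection is the universal property of the end $\tan F$, which exists because $A$ is small and $\V$ is complete: the representable $\V(X,-)$ preserves this limit, so a map $u \colon X \too \tan F$ is the same datum as the dinatural family $g_a = \pi_a \circ u \colon X \too Fa \tens \prep{(Fa)}$. The rightmost bijection is the familiar end-formula for natural transformations, $\int_{a} \V(X \tens Fa, Fa)$ being exactly the natural transformations between the two covariant functors $X \tens F(-)$ and $F$ from $A$ to $\V$.

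Second, I would supply the middle bijection objectwise from the left-dual data. For each $a$ the left dual $\prep{(Fa)}$ comes with the evaluation $\epsilon_a \colon \prep{(Fa)} \tens Fa \too \boot$ and a coevaluation $\eta_a \colon \boot \too Fa \tens \prep{(Fa)}$ satisfying the triangle identities. These yield the tensor--hom bijection
\[ \V(X \tens Fa, Fa) \;\cong\; \V(X, Fa \tens \prep{(Fa)}), \]
sending $\theta$ to $(\theta \tens \prep{(Fa)})(X \tens \eta_a)$ and $g$ to $(Fa \tens \epsilon_a)(g \tens Fa)$, the two composites being mutually inverse precisely by the snake equations. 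This bijection is manifestly natural in $X$.

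The main obstacle is the remaining compatibility: that this objectwise bijection respects the variable $a$, carrying dinatural families to natural transformations and back. Concretely, for $f \colon a \too a'$ one must check that the dinaturality square for $\{g_a\}$, relating $(Ff \tens \prep{(Fa)})\,g_a$ with $(Fa' \tens \prep{(Ff)})\,g_{a'}$, is transported by the bijection into the naturality square $Ff \circ \theta_a = \theta_{a'} \circ (X \tens Ff)$. This is a diagram chase invoking the triangle identities together with the defining property of the mate $\prep{(Ff)}$ of $Ff$ under the dualities; it is the one genuinely computational point. Granting it, the chain is an isomorphism of sets. Finally, tracing $u$ along the chain gives $\theta_a = (Fa \tens \epsilon_a)(\pi_a \tens Fa)(u \tens Fa) = \alpha_a \circ (u \tens Fa)$, which is exactly composition with $\alpha$ after tensoring $u$ with $F$; taking $X = \tan F$ and $u = \mathrm{id}$ recovers $\alpha$ itself as the transformation corresponding to the identity, confirming both that $\alpha$ is natural and that the established bijection is the one asserted.
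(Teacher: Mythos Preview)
Your proposal is correct and is precisely the standard verification one would expect. The paper itself supplies no proof of this observation, merely asserting that it ``may be readily verified''; your chain of bijections---end universal property, the tensor--hom adjunction coming from left duals, and the end formula for natural transformations---is the canonical way to fill in that verification, and your final check that the composite bijection is indeed ``postcompose with $\alpha$'' is exactly what is needed.
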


Dually, there is a canonical coaction $\alpha' \colon F \too F \tens \cot F$; see page~254~of~Ulbrich~\cite{Ulbrich90}.

The dinaturality of the end in $a$ gives rise to the naturality of the above defined action, which we notate as:
\begin{center}\resizebox{300pt}{!}{\includegraphics{action-naturality.pdf}}\end{center}

If $F \colon A \too \V$ is a functor of representation type, then so too is $F^n$, by which we mean the functor $A^n \too \V$ defined by
$(a_1,a_2,\ldots,a_n) \mapsto Fa_1 \tens Fa_2 \tens \cdots \tens Fa_n$. From the action $\alpha \colon \tan F \tens F \too F$, we can obtain
actions of $(\tan F)^{\tens n}$ on $F^n$, written $\alpha^n$. Taking $\alpha^1 = \alpha$, we define $\alpha^n$ recursively as follows:
\[ \bfig


\node aa(-1000,+500)[(\tan F)^{\tens n} \tens F^n]
\node a(-1000,0)[(\tan F)^{\tens(n-1)} \tens \tan F \tens F^{n-1} \tens F]
\node b(-1000,-500)[(\tan F)^{\tens(n-1)} \tens F^{n-1} \tens \tan F \tens F]
\node c(+1000,-500)[F^{n-1} \tens F]
\node cc(+1000,+500)[F^n]

\arrow/=/[a`aa;]
\arrow[a`b;(\tan F)^{\tens(n-1)} \tens {\rm braid} \tens F]
\arrow[b`c;\alpha^{n-1} \tens \alpha^1]
\arrow/=/[c`cc;]
\arrow[aa`cc;\alpha^n]

\efig \]

\begin{proposition} For each $n \in \mathbb N$, the map $\alpha^n \colon (\tan F)^{\tens n} \tens F^n \too F^n$ exhibits $(\tan F)^{\tens n}$
as $\tan F^n$.
\end{proposition}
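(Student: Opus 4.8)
The plan is to use the universal property recorded in the Observation preceding the statement. For any functor $G\colon B\too\V$ of covariant reconstruction type, $\tan G=\int_{b}Gb\tens\prep{(Gb)}$ carries a canonical action $\alpha_G\colon\tan G\tens G\too G$ (defined by the same formula as in the Observation), and by the same argument this action is \emph{universal}: composition with $\alpha_G$ mediates a bijection $\V(X,\tan G)\iso\mathrm{Nat}(X\tens G,G)$, natural in $X$. This property pins down $\tan G$ up to a unique compatible isomorphism, so, since $F^n$ is again of covariant reconstruction type (as noted above), it suffices to verify that the pair $\bigl((\tan F)^{\tens n},\alpha^n\bigr)$ acts universally on $F^n\colon A^n\too\V$. (Only the reconstruction-type hypothesis on $F$ is used here; the separable Frobenius structure plays no role.)

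I would argue by induction on $n$, the case $n=1$ being the Observation itself with $\alpha^1=\alpha$. For the inductive step it is cleanest to isolate a binary statement: if $G\colon B\too\V$ and $G'\colon B'\too\V$ are of covariant reconstruction type, then $\tan G\tens\tan G'$, equipped with
\[
(\tan G\tens\tan G')\tens(G\boxtimes G')\xrightarrow{\,1\tens\braid\tens1\,}(\tan G\tens G)\tens(\tan G'\tens G')\xrightarrow{\,\alpha_G\tens\alpha_{G'}\,}G\boxtimes G',
\]
acts universally on the external product $G\boxtimes G'\colon B\times B'\too\V$, $(b,b')\mapsto Gb\tens G'b'$; here $\braid$ is the braiding interchanging the inner copies of $\tan G'$ and $Gb$. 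Granting this, apply it with $G=F^{n-1}$ on $A^{n-1}$ and $G'=F$ on $A$: by the inductive hypothesis $\tan F^{n-1}=(\tan F)^{\tens(n-1)}$ with canonical action $\alpha^{n-1}$, the external product $F^{n-1}\boxtimes F$ is $F^n$, and—up to the evident reassociations—the action furnished by the binary statement is exactly the recursive definition of $\alpha^n$, since the braiding $\braid$ there is precisely the braiding $\tan F\tens F^{n-1}\too F^{n-1}\tens\tan F$ occurring in that recursion. This closes the induction.

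To prove the binary statement I would compute $\tan(G\boxtimes G')=\int_{(b,b')}(Gb\tens G'b')\tens\prep{(Gb\tens G'b')}$ directly. Because the left dual of a tensor product is the reversed tensor of the duals, $\prep{(Gb\tens G'b')}\iso\prep{(G'b')}\tens\prep{(Gb)}$, so the integrand is naturally isomorphic to $Gb\tens\bigl(G'b'\tens\prep{(G'b')}\bigr)\tens\prep{(Gb)}$. Fubini for ends rewrites the end as $\int_{b}\bigl(Gb\tens\bigl(\int_{b'}G'b'\tens\prep{(G'b')}\bigr)\tens\prep{(Gb)}\bigr)$—here monoidal completeness enters, since pulling the inner end out past $Gb\tens-$ and $-\tens\prep{(Gb)}$ requires these functors to be continuous—leaving $\int_{b}Gb\tens\tan G'\tens\prep{(Gb)}$; a braiding extracts the constant factor $\tan G'$, and one further appeal to continuity of $-\tens\tan G'$ yields $\bigl(\int_{b}Gb\tens\prep{(Gb)}\bigr)\tens\tan G'=\tan G\tens\tan G'$. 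It remains only to check that the canonical action on $\tan(G\boxtimes G')$—project to $(Gb\tens G'b')\tens\prep{(Gb\tens G'b')}$, then evaluate—transports, along this chain of isomorphisms, to the action displayed above; this uses nothing beyond the fact that the evaluation $\prep{(Gb\tens G'b')}\tens(Gb\tens G'b')\too\boot$ factors, up to braiding, through the evaluations for $Gb$ and for $G'b'$.

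The main obstacle is this last verification. The isomorphism $\tan(G\boxtimes G')\iso\tan G\tens\tan G'$ is assembled from several braidings—in the reshuffling of $\prep{(Gb\tens G'b')}$ and in hoisting the constant $\tan G'$ out from under the end over $b$—and one must confirm that these braidings mesh with the braidings built into the recursive definition of $\alpha^n$, so that the transported action is $\alpha^n$ exactly and not merely up to a residual braiding automorphism of $(\tan F)^{\tens n}$. As the functors involved carry only \emph{left} duals, every braiding direction is forced; keeping this handedness consistent through the end-calculus is the delicate bookkeeping, while the Fubini steps themselves are routine once monoidal completeness has been invoked.
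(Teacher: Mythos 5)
Your proposal is correct and follows essentially the same route as the paper, which disposes of the proposition in two lines: monoidal completeness makes tensoring with $\tan F$ preserve ends, so the end defining $\tan F^n$ collapses onto $(\tan F)^{\tens n}$ by induction from the case $n=1$. You have simply written out in full the Fubini-plus-continuity computation and the braiding bookkeeping that the paper leaves implicit, and your identification of the handedness of the braidings as the only delicate point is accurate.
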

\begin{proof} Since $\V$ is monoidally complete, tensoring with $\tan F$ preserves ends.
The proposition then follows easily from the case $n=1$ above.
\end{proof}

\begin{definition}[Discharged forms] 
For any map $f \colon X \too (\tan F)^{\tens n}$ in $\V$, we call the map
\[ X \tens F^n \oto{f \tens F^n} (\tan F)^{\tens n} \tens F^n \oto{\alpha^n} F^n \] the \emph{discharged form} of $f$. From the above proposition,
two maps are equal if and only if they have the same discharged form.
\end{definition}
We will use this property to define algebraic structures on $\tan F$, as well as to verify all of the axioms of those algebraic structures.


\subsection{Definition of the Structure}
\subsubsection{Algebra Structure}

\begin{proposition} Let $F\colon A \too \V$ be a functor of reconstruction type. Then $\tan F$ is an algebra,
with multiplication defined as having discharged form:
\begin{equation} \begin{array}{c}\includegraphics[scale=0.28]{definition-multiplication.pdf} \end{array} \label{def-of-multiplication-eq} \end{equation}
and unit having discharged form:
\begin{equation} \begin{array}{c}\includegraphics[scale=0.28]{definition-unit.pdf} \end{array}\label{def-of-unit-eq} \end{equation}
\end{proposition}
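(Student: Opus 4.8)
The plan is to work entirely inside the discharged-form calculus just set up. Since $\alpha^1 = \alpha$ exhibits $\tan F$ as $\tan F^1$, a morphism $X \too \tan F$ is precisely a natural transformation $X \tens F \too F$, and two such morphisms agree exactly when their discharged forms do. The first task is to check that the two pictures in the statement are natural in the $A$-variable, so that they genuinely name a multiplication and a unit. Unwound, the displayed discharged form of $\mu$ is the composite $\alpha \circ (\tan F \tens \alpha) \colon \tan F \tens \tan F \tens F \too F$ (with the associativity constraint of $\V$ inserted where needed); consistently with the fact that the hypothesis asks only for a functor of reconstruction type, no monoidal or comonoidal structure of $F$ enters here, and the naturality over $A$ of this composite is immediate from the naturality of the universal action $\alpha$, i.e.\ from the action-naturality equation displayed above. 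The displayed discharged form of $\eta$ is the identity transformation on $F$, which is trivially natural. By the universal property of $\tan F$ --- and, for the associativity step below, the fact that since $\V$ is monoidally complete tensoring with $\tan F$ preserves the relevant ends, so that $(\tan F)^{\tens 2}$ is exhibited as $\tan F^2$ --- these natural transformations name unique morphisms $\mu \colon \tan F \tens \tan F \too \tan F$ and $\eta \colon \boot \too \tan F$. In short, $\mu$ is characterised by $\alpha \circ (\mu \tens F) = \alpha \circ (\tan F \tens \alpha)$ and $\eta$ by $\alpha \circ (\eta \tens F)$ being the identity on $F$ (both modulo the coherence isomorphisms of $\V$), and these are the only facts about $\mu$ and $\eta$ I shall use.

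Associativity is then a brief manipulation. By the discharged-form criterion it suffices to check that $\mu \circ (\mu \tens \tan F)$ and $\mu \circ (\tan F \tens \mu)$, as morphisms $(\tan F)^{\tens 3} \too \tan F$, have equal discharged forms $(\tan F)^{\tens 3} \tens F \too F$. Discharging the outer $\mu$ in each, via $\alpha \circ (\mu \tens F) = \alpha \circ (\tan F \tens \alpha)$, turns both into composites in which only the inner $\mu$ survives, applied either to the first two tensor factors or to the last two; discharging that inner $\mu$ in turn --- again by its characterising equation, now with one further $F$-wire carried along untouched, which is legitimate by bifunctoriality of $\tens$ --- reduces both sides to $\alpha \circ (\tan F \tens \alpha) \circ (\tan F \tens \tan F \tens \alpha)$, i.e.\ ``act with the third factor, then with the second, then with the first''. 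Hence the two discharged forms agree and $\mu$ is associative. The unit laws are the analogous one-step reductions: inserting $\eta$ into either tensor slot of $\mu$ and discharging produces the factor $\alpha \circ (\eta \tens F)$, which is an identity and may be erased, so that $\mu \circ (\eta \tens \tan F)$ and $\mu \circ (\tan F \tens \eta)$ both discharge to $\alpha$, the discharged form of the identity on $\tan F$, and are therefore both equal to that identity.

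I do not anticipate a genuine obstacle: the content is bookkeeping, not difficulty. The one point requiring care --- and the natural place for a slip to hide --- is keeping the associativity and unit isomorphisms of $\V$ (together with any braiding occurring in the displayed pictures) in their correct positions while ``discharging one $\mu$ at a time'', and confirming at the outset that each proposed discharged form is a genuine natural transformation over $A$ rather than a merely dinatural family. The graphical calculus for functor-regions is designed precisely to absorb these coherence isomorphisms silently, so that in the string-diagram version associativity becomes a single visible move --- sliding one action box past another along the $F$-wire --- and each unit law becomes the erasure of an empty action box.
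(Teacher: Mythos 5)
Your proposal is correct and follows exactly the route the paper intends: the paper in fact gives no explicit proof of this proposition, merely remarking that the structure is associative and unital without $A$ being monoidal, and the intended argument is precisely your discharged-form bookkeeping via the characterisations $\alpha(\mu \tens F) = \alpha(\tan F \tens \alpha)$ and $\alpha(\eta \tens F) = \mathrm{id}_F$, justified by the universal property of $\alpha^n$. Your added care about naturality in the $A$-variable and about $(\tan F)^{\tens 2} \cong \tan F^2$ supplies details the paper leaves tacit, but introduces nothing different in substance.
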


\noindent Note that this monoidal structure is associative and unital, without assuming that $A$ is monoidal.

\subsubsection{Coalgebra Structure}

\begin{proposition} Suppose that $F\colon A \too \V$ is a monoidal and comonoidal functor of reconstruction type.
Then, without assuming any coherence between these structures, we can define a coassociative comultiplication on $\tan F$ as having
discharged form:
\begin{equation} \begin{array}{c} \includegraphics[scale=0.25]{definition-comultiplication.pdf} \end{array} \label{def-of-comultiplication-eq} \end{equation}
As well as a counit for $\tan F$:
\begin{equation} \begin{array}{c} \includegraphics[scale=0.25]{definition-counit.pdf} \end{array} \label{def-of-counit-eq} \end{equation}
\end{proposition}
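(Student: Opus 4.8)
The plan is to read the comultiplication and counit directly off the monoidal and comonoidal structure maps of $F$, and then to verify coassociativity and the counit laws by the discharged-form criterion. Since $F$ is a monoidal functor, $A$ is in particular monoidal; write $I$ for its unit and $\monbin, \monnul$ (resp.\ $\combin, \comnul$) for the binary and nullary monoidal (resp.\ comonoidal) structure maps of $F$. Unwound, the discharged form displayed in Equation~\ref{def-of-comultiplication-eq} is the composite that first merges the two $F$-inputs along $\monbin$, then lets $\tan F$ act through the universal action $\alpha$ at $x \tens y$, and finally splits the result along $\combin$; the discharged form displayed in Equation~\ref{def-of-counit-eq} composes, in order, $\monnul$, the action of $\tan F$ at $FI$, and $\comnul$. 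Under the ``natural endomorphism'' reading of $\tan F$, these recipes say that a natural family $\theta_a \colon Fa \too Fa$ yields the family $\combin_{x,y} \circ \theta_{x \tens y} \circ \monbin_{x,y} \colon Fx \tens Fy \too Fx \tens Fy$ together with the endomorphism $\comnul \circ \theta_I \circ \monnul$ of $\boot$.

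First I would dispatch well-definedness. The comultiplication's discharged form must be natural in $(x,y) \in A \times A$ so that, since $(\tan F)^{\tens 2}$ is $\tan F^2$, it is the discharged form of a unique map $\Delta \colon \tan F \too (\tan F)^{\tens 2}$; this naturality is immediate from the naturality of $\monbin$, of $\combin$, and of the action $\alpha$. The counit is already a morphism $\tan F \too \boot$, so nothing further is needed there.

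Next, coassociativity. By the discharged-form criterion it suffices to equate the discharged forms of $(\Delta \tens \tan F)\Delta$ and $(\tan F \tens \Delta)\Delta$, regarded as natural transformations $\tan F \tens F^3 \too F^3$. Expanding, each side becomes ``merge the three $F$-inputs along some bracketing of $\monbin$'s, act once through $\alpha$, then split along the matching bracketing of $\combin$'s'', the two sides differing only by a reassociation inside $A$. The two merging composites agree by the associativity axiom for the monoidal functor $F$; the two splitting composites agree by the coassociativity axiom for the comonoidal functor $F$; and the intervening copy of $F$ applied to the associator of $A$ slides past $\alpha$ by the naturality of the action. The counit laws are treated the same way: after discharging, $(\epsilon \tens \tan F)\Delta = \tan F = (\tan F \tens \epsilon)\Delta$ follows by composing the left (resp.\ right) unitality axiom for $(\monbin,\monnul)$ with the left (resp.\ right) counitality axiom for $(\combin,\comnul)$, using $F$ of the unit isomorphisms of $A$ and the naturality of the action. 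Crucially, each of these arguments invokes the coherence of the monoidal structure and of the comonoidal structure \emph{separately}, and never any compatibility between them --- which is exactly the force of the clause ``without assuming any coherence between these structures''.

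I do not expect a genuine obstacle: once the recipes for $\Delta$ and $\epsilon$ are identified, the rest is bookkeeping in the coloured-region string calculus for $F$. The fiddliest step is coassociativity, where one must thread $F$ of the associator of $A$ through the interleaved $\monbin$-, $\alpha$-, and $\combin$-pieces and check that the left- and right-bracketed towers genuinely coincide; the discipline to maintain throughout is to appeal only to the monoidal- and comonoidal-functor axioms and never to the Frobenius or separability equations, which come into play only later in the paper.
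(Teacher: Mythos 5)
Your reading of the two discharged forms (merge along $\monbin$, act via $\alpha$, split along $\combin$; and $\monnul$ followed by the action at $F\boot$ followed by $\comnul$) matches the paper's definitions, as confirmed by the Observation on the iterated comultiplication and by the later barbell computation, and your verification of coassociativity and the counit laws via the discharged-form criterion, using only the separate coherence axioms of the monoidal and comonoidal structures plus naturality of the action, is exactly the argument the paper leaves to the reader. The proposal is correct and takes the same approach the paper intends.
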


\begin{observation} These definitions imply that the discharged form of the iterated comultiplication $\tan F \too (\tan F)^{\tens n}$
is obtained as: \[ \tan F \tens Fx_1 \tens \cdots \tens Fx_n \oto{\tan F \tens \monoidal} \tan F \tens F(x_1 \tens \cdots \tens x_n) 
\oto{\alpha} F(x_1 \tens \cdots \tens x_n) \oto{\comonoidal} Fx_1 \tens \cdots \tens Fx_n \] 
\end{observation}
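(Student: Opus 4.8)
The plan is to argue by induction on $n$, using the recursive definitions of both the iterated comultiplication and the iterated action $\alpha^n$. The base cases are immediate: for $n=1$ the iterated comultiplication is the identity and the claimed composite reduces to $\alpha$ itself, since the unary $\monoidal$ and $\comonoidal$ are identities; for $n=2$ the formula is exactly the defining discharged form of $\Delta$ recorded in Equation~\ref{def-of-comultiplication-eq}. For the inductive step I would write the $n$-fold comultiplication as $\Delta_n = (\Delta_{n-1}\tens\tan F)\Delta$, which is legitimate because $\Delta$ was already shown to be coassociative, so the bracketing is immaterial.

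I would then expand the discharged form $\alpha^n\circ(\Delta_n\tens F^n)$ and unfold $\alpha^n$ by its recursive definition. The first $n-1$ tensorands of $(\tan F)^{\tens n}$, which are precisely the output of $\Delta_{n-1}$ applied to the first factor of $\Delta$, are consumed by $\alpha^{n-1}$ against $Fx_1\tens\cdots\tens Fx_{n-1}$, while the last tensorand, the second factor of $\Delta$, is braided past $Fx_1\tens\cdots\tens Fx_{n-1}$ and consumed by $\alpha$ against $Fx_n$. The $\alpha^{n-1}$ portion is by construction the discharged form of $\Delta_{n-1}$, so the induction hypothesis rewrites it, writing $w = x_1\tens\cdots\tens x_{n-1}$, as: merge $Fx_1\tens\cdots\tens Fx_{n-1}$ into $Fw$ via the iterated monoidal structure $\monoidal$, act by the first factor through $\alpha$, and split back via the iterated comonoidal structure $\comonoidal$.

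At this point the remaining configuration, namely the first factor of $\Delta$ acting on $Fw$ and the second factor acting on $Fx_n$, is exactly the left-hand side of the binary discharged-form identity for $\Delta$ applied to the objects $w$ and $x_n$. Invoking that identity collapses the two separate actions into a single action of the original element on $F(w\tens x_n)$, preceded by the binary merge $\monoidal\colon Fw\tens Fx_n\too F(w\tens x_n)$ and followed by the binary split $\comonoidal$. Composing this binary merge with the merge $\monoidal\tens Fx_n$ coming from the inductive step, and dually for the splits, the associativity axiom for the monoidal structure on $F$, together with its dual for the comonoidal structure, assembles these into the full $n$-ary $\monoidal$ and $\comonoidal$, which is exactly the claimed formula.

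The one point requiring care, and the main obstacle, is reconciling the braiding bookkeeping across the two levels of the recursion: in the unfolding of $\alpha^n$ the last factor is routed past the $n-1$ separate wires $Fx_1\tens\cdots\tens Fx_{n-1}$, whereas the binary identity applied to $w$ and $x_n$ routes it past the single merged wire $Fw$. These agree by naturality of the braiding with respect to the merge $\monoidal\colon Fx_1\tens\cdots\tens Fx_{n-1}\too Fw$, and keeping that naturality square correctly aligned with the nested applications of $\alpha^{n-1}$ and $\alpha$ is where the genuine work lies; the associativity and coassociativity rearrangements of $\monoidal$ and $\comonoidal$ are then routine given the monoidal- and comonoidal-functor axioms.
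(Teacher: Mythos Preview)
Your inductive argument is correct and is precisely the natural way to establish this claim. The paper itself offers no proof: the statement is labelled an \emph{Observation} and simply asserted as a consequence of the definitions of $\Delta$ and $\alpha^n$, so there is nothing to compare against beyond noting that you have supplied the details the paper leaves implicit. Your identification of the one nontrivial manoeuvre---using naturality of the braid to commute the $(n-1)$-ary merge $\monoidal$ past the strand carrying the second output of $\Delta$, so that the binary discharged-form identity for $\Delta$ becomes applicable at the pair $(w,x_n)$---is exactly right, and the subsequent use of (co)associativity of the (co)monoidal structure on $F$ to reassemble the $n$-ary $\monoidal$ and $\comonoidal$ is routine as you say.
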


\subsubsection{Hopf Algebra Structure}

\begin{proposition} Let $F \colon A \too \V$ be a separable Frobenius monoidal functor of reconstruction type, and suppose that $A$ has left duals.
Then there is a map $S \colon \tan F \too \tan F$ which we think of as a candidate for an antipode, defined with discharged form:
\begin{equation}
\begin{array}{c} \includegraphics[width=350pt]{definition-antipode.pdf} \end{array} \label{def-of-antipode-eq}
\end{equation}
\end{proposition}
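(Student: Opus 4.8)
The plan is to recognize the diagram displayed in~(\ref{def-of-antipode-eq}) as a natural transformation and then to appeal to the universal property of $\tan F$. Unwinding the picture, for each object $x$ of $A$ it specifies a morphism $\sigma_x \colon \tan F \tens Fx \too Fx$ assembled from four kinds of ingredient: the unit and counit of the left-dual adjunction $\prep{x} \ladj x$ in $A$ (available since $A$ has left duals), transported through $F$ and whiskered by the structure maps $\monnul$, $\comnul$, $\monoidal$, $\comonoidal$ --- which are exactly the data exhibiting $F(\prep{x})$ as a left dual of $Fx$ in $\V$, i.e.\ the duality-preservation property of Frobenius monoidal functors recalled above; the braiding of $\V$; and the universal action $\alpha$ of $\tan F$ on $F$, applied at the object $\prep{x}$. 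Thus each $\sigma_x$ is well-typed, and $\sigma = (\sigma_x)_{x \in A}$ is at least a family of morphisms $\tan F \tens Fx \too Fx$.

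The one point requiring argument is that $\sigma$ is natural in $x$, i.e.\ that $\sigma_y \circ (\tan F \tens Fg) = Fg \circ \sigma_x$ for every $g \colon x \too y$ in $A$. This is a diagram chase resting on three facts already in hand: the naturality of the action $\alpha$, which is precisely the dinaturality of the defining end $\tan F = \int_{a \in A} Fa \tens \prep{(Fa)}$ in its bound variable, here instantiated along $\prep{g} \colon \prep{y} \too \prep{x}$; the naturality of $\monoidal$ and $\comonoidal$ in their arguments; and the functoriality of $\prep{(-)}$ on $A$ together with the triangle identities for $\prep{x} \ladj x$ and $\prep{y} \ladj y$. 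Conceptually, $\sigma$ is just the mate of $\alpha$ along the dual pairing $F(\prep{x}) \ladj F(x)$, and mates of natural transformations are natural. I expect this bookkeeping --- aligning the dinaturality square of the end with the duality data of $A$ as it passes through the Frobenius structure of $F$ --- to be the only real work; in the graphical calculus it amounts to sliding beads along wires.

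Granting naturality, the construction of $S$ is forced. By the Observation that $\tan F$ acts universally on $F$, composition with $\alpha$ mediates a bijection between maps $X \too \tan F$ in $\V$ and natural transformations $X \tens F \too F$; instantiating $X = \tan F$ and feeding in $\sigma$ produces a unique morphism $S \colon \tan F \too \tan F$ whose \emph{discharged form} $\alpha \circ (S \tens F)$ is the diagram~(\ref{def-of-antipode-eq}). This is the asserted candidate. (That this $S$ in fact satisfies the weak antipode axioms~(\ref{weak-antipode-axioms}), which will additionally require $A$ to be autonomous rather than merely to have left duals, is not claimed here and belongs to the proof of the Theorem.)
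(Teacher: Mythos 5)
Your proposal is correct and matches the paper's approach: the paper likewise defines $S$ as the unique map whose discharged form is the given composite, invoking the universal property of $\alpha$, and identifies the only real work as naturality of that composite in $x$, which it attributes to the naturality of $\alpha$, of the binary monoidal and comonoidal structure maps, the dinaturality of the duality unit and counit in $A$, and the naturality of the braid (the last of which you use implicitly but omit from your explicit list). This is a cosmetic omission only; the argument is the same.
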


Notice in particular how the monoidal and comonoidal structures on $F$ permit one to consider the application of $F$ as not merely ``boxes'' but 
more like a flexible sheath.

As motivation for this graphical notation, compare a more traditionally rendered definition of $S$; as the unique map satisfying:
\[ \bfig

\hscalefactor{1.5}

\node a(-750,+1000)[\tan F \tens Fx]
\node b(0,+1000)[\tan F \tens Fx]
\node c(+750,+1000)[Fx]
\node d(-1000,+500)[\tan F \tens \boot \tens Fx]
\node e(-1000,0)[\tan F \tens F\boot \tens Fx]
\node f(-1000,-500)[\tan F \tens F(x \tens \prep{x}) \tens Fx]
\node g(-1000,-1000)[\tan F \tens Fx \tens F\prep{x} \tens Fx]
\node h(0,-1000)[Fx \tens \tan F \tens F\prep{x} \tens Fx]
\node i(+1000,-1000)[Fx \tens F\prep{x} \tens Fx]
\node j(+1000,-500)[Fx \tens F(\perp{x} \tens x)]
\node k(+1000,0)[Fx \tens F\boot]
\node l(+1000,+500)[Fx \tens \boot]

\arrow[a`b;S \tens Fx]
\arrow[b`c;\alpha x]
\arrow[a`d;\simeq]
\arrow[d`e;\tan F \tens \monnul \tens Fx]
\arrow[e`f;\tan F \tens F\unit \tens Fx]
\arrow[f`g;\tan F \tens \comonoidal \tens Fx]
\arrow|b|[g`h;b \tens F\prep{x} \tens Fx]
\arrow|b|[h`i;Fx \tens \alpha \prep{x} \tens Fx]
\arrow[i`j;Fx \tens \monoidal]
\arrow[j`k;Fx \tens F\counit]
\arrow[k`l;Fx \tens \comnul]
\arrow[l`c;\rulaw^{-1}]

\efig \]

Among other things, for $S$ to be well-defined in this way we must show that the long lower composite is natural in $x$; when rendered graphically, this is immediate, even though
a careful proof of this fact requires the naturality of $\alpha$, the naturality of the binary monoidal and comonoidal structure maps, the dinaturality of the unit and counit
maps in $A$, and the naturality of the braid.

Different treatments disagree about whether or not is necessary for the antipode $S \colon H \too H$ of a Hopf or weak Hopf algebra to be composition invertible.
The above definition seems \emph{not} to be invertible, in general. However, if, in addition to left duals, the category $A$ also has \emph{right}
duals, then one can define an analogous map $S^{-1} \colon H \too H$, using a ``Z-bend'' instead of an ``S-bend'' in the functor region; which the reader may
verify is an inverse to this map.

\subsection{Verification of Axioms}

Having defined all the various structural maps, we now see how they fit together to make bialgebras, weak bialgebras, Hopf algebras, and weak Hopf algebras.

\begin{theorem} \label{weak-bialgebra-thm}
Let $F\colon A \too \V$ be a separable Frobenius monoidal functor of reconstruction type. Then, with algebra structure defined by 
Equations~\ref{def-of-multiplication-eq}~and~\ref{def-of-unit-eq} and coalgebra structure defined by Equations~\ref{def-of-comultiplication-eq}~and~\ref{def-of-counit-eq},
$\tan F$ is a weak bialgebra. \end{theorem}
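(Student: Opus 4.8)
The plan is to verify each of the defining axioms of a weak bialgebra---the two Weak Unit Axioms (\ref{weak-unit-axioms}), the two Weak Counit Axioms (\ref{weak-counit-axioms}), and the Bialgebra Axiom (\ref{weak-bialgebra-axiom})---by passing to discharged forms, using the fact (from the proposition on $\alpha^n$) that two parallel maps into $(\tan F)^{\tens n}$ are equal precisely when their discharged forms agree. Since the algebra structure on $\tan F$ requires no hypothesis on $A$ and is automatically associative and unital, and similarly the coalgebra structure is automatically coassociative and counital, the content is entirely in the three compatibility axioms; each becomes, after discharging, an equation between natural transformations $F^n \too F^n$ expressed in the coloured-region graphical calculus.

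First I would treat the Bialgebra Axiom: discharge both sides against $F^2$, expand the multiplication via (\ref{def-of-multiplication-eq}) and the comultiplication via (\ref{def-of-comultiplication-eq}) together with the observation giving the iterated comultiplication as $\tan F \tens \monoidal$ then $\alpha$ then $\comonoidal$. On each side one obtains a diagram built from the action $\alpha$, the braid, the counits $\epsilon_x$ of the duals, and the monoidal/comonoidal structure maps $\monoidal,\comonoidal$ of $F$; the two sides are then matched by sliding the action past the braid using action-naturality, reassociating the $\monoidal$'s and $\comonoidal$'s using the monoidal and comonoidal associativity axioms, and---crucially---invoking the Frobenius axioms (\ref{frobenius-axioms}) to reshape the $f$-region where a $\monoidal$ meets a $\comonoidal$. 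I would then do the Weak Unit Axioms: discharge against $F^3$, expand the two occurrences of $\Delta\eta$ using the definitions of unit (\ref{def-of-unit-eq}) and comultiplication, and check that both composites reduce to the discharged form of $\Delta_3\eta$; here the braidings appearing are the inverse ones, consistent with the remark after the weak bialgebra definition, and the reduction uses the separability equation (\ref{sep-condition-bin}) to collapse the $\monoidal$-then-$\comonoidal$ loop that the idempotents $\Delta\eta$ produce. The Weak Counit Axioms are formally dual---expand $\epsilon\mu$ via (\ref{def-of-counit-eq}) and (\ref{def-of-multiplication-eq})---and follow by the same manipulations read in the mirror, again using separability.

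The main obstacle I expect is bookkeeping rather than conceptual: in the discharged diagrams the region where $F$ is applied carries several nested $\monoidal$/$\comonoidal$ structure maps interleaved with braids and dinaturality moves for $\epsilon_x$, and one must apply the Frobenius axioms and the separability equation at exactly the right spots---and \emph{only} those, never the strength conditions (\ref{mix-condition-bin})--(\ref{mix-condition-nul}), which do not hold for a merely separable Frobenius functor. The graphical calculus with coloured $f$-regions is designed precisely so that "the $f$-sheath deforms continuously" captures the Frobenius axioms and "a handle on the sheath may be cut" captures separability, so the verification should reduce, for each axiom, to exhibiting a deformation of the $f$-region together with a finite number of applications of action-naturality and the naturality of the braid. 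I would present each of the three verifications as a short sequence of such pictures, flagging at each step which axiom of $F$ (associativity, unitality, Frobenius, or separability) and which naturality (of $\alpha$, of $\monoidal$, of $\comonoidal$, of the braid, or dinaturality of $\epsilon$) licenses the move.
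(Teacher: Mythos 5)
Your overall architecture is exactly that of the paper: reduce each of the three compatibility axioms to an equation of discharged forms $F^n \too F^n$ and verify it in the coloured-region calculus using action-naturality, braid naturality, and the structure axioms of $F$. But there is a concrete problem in how you allocate the hypotheses, and in one place the move you describe would not be available. The discharged form of $\Delta\eta$ is precisely the idempotent $Fx \tens Fy \oto{\monoidal} F(x\tens y) \oto{\comonoidal} Fx \tens Fy$, i.e.\ the ``$\monoidal$-then-$\comonoidal$ loop'' you propose to collapse. Collapsing that composite to the identity is Equation~\ref{mix-condition-bin} --- one of the strength conditions you correctly say must never be used --- not the separability equation~\ref{sep-condition-bin}, which collapses the composite in the \emph{other} order, $F(x\tens y) \oto{\comonoidal} Fx\tens Fy \oto{\monoidal} F(x\tens y)$. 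If you could collapse the $\Delta\eta$ idempotent you would be proving the \emph{strong} unit axiom $\Delta\eta = \eta\tens\eta$, which fails for a genuinely weak bialgebra. The correct mechanism for the Weak Unit Axioms is different: in the convolution $(\Delta\eta\tens\eta)\star(\eta\tens\Delta\eta)$ the interleaved middle segment $(\comonoidal\tens Fz)(Fx\tens\monoidal)$ is reshaped by the Frobenius axioms~\ref{frobenius-axioms} into $\comonoidal\circ F\delta\circ\monoidal$ at the triple level, after which associativity and coassociativity of $\monoidal$ and $\comonoidal$ give the discharged form of $\Delta_3\eta$. No separability is used there at all; the paper notes explicitly that Frobenius monoidality suffices for the unit axioms.

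Relatedly, you have the one essential use of separability in the wrong place. It is the Bialgebra Axiom whose two sides, after discharging and applying naturalities, differ exactly by an inserted $F(x\tens y)\oto{\comonoidal}Fx\tens Fy\oto{\monoidal}F(x\tens y)$, so separability is what closes that gap --- your sketch of that step mentions only the Frobenius axioms. And the Weak Counit Axioms need even less than you claim: they follow from $F$ being merely monoidal and comonoidal, with no Frobenius or separability input (this asymmetry between the unit and counit verifications is an artefact of working with the covariant Tannaka object). Since the theorem assumes $F$ separable Frobenius, over-using hypotheses is not by itself fatal, but the specific collapse you describe for the unit axioms is not licensed by any hypothesis you have, so that verification as written would not go through.
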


\begin{proof}
 
 
First, we verify the Bialgebra Axiom (Equation~\ref{weak-bialgebra-axiom}) by the following computations:
\fullwidth{calculation-bialgebra.pdf} 
Comparing these shows that it suffices to know $F(x \tens y) \oto{\comonoidal} Fx \tens Fy \oto{\monoidal} F(x \tens y)$ should be 
the identity; this is separability of $F$.
 

Second, we verify the Weak Unit Axioms (Equations~\ref{weak-unit-axioms}). In discharged form, the first unit expression is calculated as:
\fullwidth{tripleunit-short.pdf}

The calculations in Figure~\ref{weakunit23} show that the second and third unit expressions have the following discharged forms:
\fullwidth{weakunit23abbrev.pdf}
For these unit axioms, we see that it suffices to assume that $F$ is Frobenius monoidal.
\fullwidthfigure{weakunit23.pdf}{Weak unit calculations. In both calculations, the equalities hold by: definition of the multiplication
of $\tan F$; braid axioms; the definition of the comultipliation of $\tan F$; and, finally, the definition of the unit of $\tan F$.}{weakunit23}

Finally, we verify the Weak Counit Axioms (Equations~\ref{weak-counit-axioms}). The discharged form of the first of these is easily calculated:
\fullwidth{weakcounit1.pdf}
The discharged forms of the second and third counit expression are computed
in Figure~\ref{weakcounit23}; they are equal, as desired. Examining this figure shows that the counit axioms follow merely from $F$ being both 
monoidal and comonoidal, without requiring $F$ to be Frobenius monoidal or separable.
\fullfigure{weakcounit23.pdf}{Weak counit calculations}{weakcounit23} 
This completes the proof. \end{proof}
This asymmetry between the verifications of the Weak Unit and the Weak Counit Axioms results from working with the \emph{covariant} Tannaka object $\tan F$;
had we instead used the contravariant Tannaka object, $\cot F$, the situation would be reversed.

\begin{corollary} \label{bialgebra-corollary} Let $F \colon A \too \V$ be a separable Frobenius monoidal functor of reconstruction type. If $F$ is moreover strong monoidal, then
the weak bialgebra $\tan F$ constructed in Theorem~\ref{weak-bialgebra-thm} is, in fact, a (non-weak) bialgebra.\end{corollary}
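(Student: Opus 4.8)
The plan is to observe that Theorem~\ref{weak-bialgebra-thm} already equips $\tan F$ with a weak bialgebra structure, so the Bialgebra Axiom (Equation~\ref{weak-bialgebra-axiom}) is in hand and only the three remaining non-weak axioms require verification: the Barbell Axiom (Equation~\ref{barbell-axiom}), the (Strong) Unit Axiom (Equation~\ref{unit-axiom}), and the (Strong) Counit Axiom (Equation~\ref{counit-axiom}). Recall that a separable Frobenius monoidal functor is strong exactly when, on top of separability (that is, $\monbin\combin = \mathrm{id}_{F(x \tens y)}$, Equation~\ref{sep-condition-bin}), it also satisfies the three ``continuous deformation'' equations $\combin\monbin = \mathrm{id}_{Fx \tens Fy}$ (Equation~\ref{mix-condition-bin}), $\monnul\comnul = \mathrm{id}_{F\boot}$ (Equation~\ref{sep-condition-nul}), and $\comnul\monnul = \mathrm{id}_{\boot}$ (Equation~\ref{mix-condition-nul}). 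The guiding observation is that these three surplus strength equations correspond one-for-one to the three surplus bialgebra axioms: each computation below is its counterpart from the proof of Theorem~\ref{weak-bialgebra-thm}, but terminating one move earlier in strict form because the pertinent continuous-deformation equation is now available.

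First I would dispatch the (Strong) Unit Axiom using the discharged-form criterion, comparing $\Delta\eta$ and $\eta \tens \eta$ as maps $\boot \too (\tan F)^{\tens 2}$. Unfolding the discharged form of $\Delta\eta$ with the Observation on the iterated comultiplication (the one following Equation~\ref{def-of-comultiplication-eq}) and the definition of the unit (Equation~\ref{def-of-unit-eq}), it collapses---up to coherence isomorphisms---to the round trip $Fx \tens Fy \oto{\monbin} F(x \tens y) \oto{\combin} Fx \tens Fy$, which Equation~\ref{mix-condition-bin} renders the identity; this is exactly the discharged form of $\eta \tens \eta$. Next, the Barbell Axiom: $\epsilon\eta$ is an endomorphism of $\boot$, which I would compute directly by unfolding $\epsilon$ via the $n = 0$ case of the same Observation and $\eta$ via Equation~\ref{def-of-unit-eq}; since $\eta$ sends the relevant end projection to a coevaluation, a zigzag identity of the dual pair collapses it against the evaluation concealed inside $\alpha$, leaving $\boot \oto{\monnul} F\boot \oto{\comnul} \boot$, the identity by Equation~\ref{mix-condition-nul}. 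Finally, the (Strong) Counit Axiom $\epsilon\mu = \epsilon \tens \epsilon$ is handled dually: writing both composites out of $(\tan F)^{\tens 2}$ explicitly from the definition of the multiplication (Equation~\ref{def-of-multiplication-eq}) and the above description of $\epsilon$, the one discrepancy is a factor $\monnul\comnul$ on $F\boot$, which Equation~\ref{sep-condition-nul} turns into the identity. That this last axiom needs a nullary strength equation where the unit axiom used a binary one is the expected asymmetry of working with $\tan F$ rather than $\cot F$, as in the remark following the proof of Theorem~\ref{weak-bialgebra-thm}.

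I do not anticipate a genuine conceptual obstacle; the work is bookkeeping, and the whole correspondence can essentially be read off the figures in the proof of Theorem~\ref{weak-bialgebra-thm} by counting connected components of the functor's coloured regions. The one point needing care is the Counit Axiom: since $\epsilon$ is a map \emph{out} of $\tan F$, the discharged-form criterion degenerates to a tautology for it, so one must genuinely unfold $\epsilon$ and the multiplication as explicit composites rather than reduce the question to the coloured regions (equivalently, one may run the argument on $\cot F$ and dualise). Beyond that, the only thing to monitor is that each of Equations~\ref{mix-condition-bin}, \ref{sep-condition-nul}, and~\ref{mix-condition-nul} be invoked at precisely the spot where the corresponding weak-axiom computation was forced to re-join or bubble a coloured region, so that the cancellation is licensed and not merely plausible.
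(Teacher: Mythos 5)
Your proposal is correct, but it takes a slightly different (and longer) route than the paper. The paper does not verify all three surplus axioms: it invokes the result of B\"ohm, Nill, and Szlach\'anyi that a weak bialgebra is a bialgebra as soon as the Barbell is trivial and \emph{either} the Strong Unit Axiom \emph{or} the Strong Counit Axiom holds, and then checks only two things --- that the barbell of $\tan F$ is $\boot \oto{\monnul} F\boot \oto{\comnul} \boot$ (trivial by Equation~\ref{mix-condition-nul}) and that the Strong Counit Axiom reduces to $F\boot \oto{\comnul} \boot \oto{\monnul} F\boot$ being the identity (Equation~\ref{sep-condition-nul}). These two computations agree exactly with your Barbell and Counit steps, including the correct identification of which nullary strength equation is needed where, and your observation that the discharged-form criterion is vacuous for maps out of $\tan F$ into $\boot$ is exactly why the counit check must be done by direct unfolding. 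Your additional verification of the Strong Unit Axiom via Equation~\ref{mix-condition-bin} is sound and is precisely the ``equally easy (albeit longer)'' direct verification the paper mentions immediately after its proof; what it buys is self-containedness (no appeal to the external reduction lemma) and a clean one-to-one pairing of the three surplus strength conditions with the three surplus bialgebra axioms, at the cost of one extra calculation. The only cosmetic quibble is your invocation of a coevaluation/zigzag in the Barbell step: in the discharged-form calculus the unit of $\tan F$ acts trivially by Equation~\ref{def-of-unit-eq}, so no explicit duality bookkeeping is needed there.
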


\begin{proof} As shown by B\"ohm, Nill, and Szlach\'anyi~(\cite{BohmNillSzlachanyi99}, page~5), to show that a weak bialgebra is a bialgebra, 
it suffices to show that the Barbell is trivial (Equation~\ref{barbell-axiom}) and \emph{either} the Strong
Unit Axiom (Equation~\ref{unit-axiom}) or the Strong Counit Axiom (Equation~\ref{counit-axiom}) holds. 

We compute that the barbell of $\tan F$ is:
\begin{center}\includegraphics[width=350pt]{barbell-calculation.pdf}\end{center}
That is, the barbell is the composite $\boot \oto{\monnul} F\boot \oto{\comnul} \boot$, which is the identity when $F$ is strong.

We choose to establish the Strong Counit Axiom (Equation~\ref{counit-axiom}), using the following two calculations: 
\fullwidth{calculation-bifurcation-2.pdf} 
and we see that for these two to be equal, it suffices to have $F\boot \oto{\comnul} \boot \oto{\monnul} F\boot$ be the identity; which is the case if $F$ is strong.
\end{proof}

\noindent It is equally easy (albeit longer) to verify the bialgebra axioms (Equations~\ref{barbell-axiom},~\ref{unit-axiom},~\ref{counit-axiom},~and~\ref{bialgebra-axiom})
directly.

\subsubsection{Hopf Algebras and Weak Hopf Algebras}

\begin{theorem} \label{weak-hopf-algebra-thm} Let $F\colon A \too \V$ be a separable Frobenius monoidal functor of reconstruction type, and let $\tan F$ be the weak bialgebra constructed as
in Theorem~\ref{weak-bialgebra-thm}. If $A$ has left duals, 
then the definition of $S$ in Equation~\ref{def-of-antipode-eq} equips the weak bialgebra $\tan F$ with a weak Hopf algebra structure. 
\end{theorem}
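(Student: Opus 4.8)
The plan is to verify the three weak antipode axioms of Equation~\ref{weak-antipode-axioms}, namely $S \star H = r$, $S \star H \star S = S$, and $H \star S = t$, by passing to discharged forms. Since $\tan F$ is already known to be a weak bialgebra by Theorem~\ref{weak-bialgebra-thm}, and since (by the proposition on discharged forms) two maps $X \too (\tan F)^{\tens n}$ are equal iff their discharged forms are equal, it suffices to compute the discharged form of each side of each axiom and match them. The convolution $f \star g = \mu(f \tens g)\Delta$ has a particularly clean discharged form: the comultiplication duplicates an $Fx$-wire through the comonoidal structure $\comonoidal$ on $F$, the two copies of $\tan F$ act separately via $\alpha$, and the multiplication recombines through $\monoidal$; so $S \star H$, $H \star S$, and $S \star H \star S$ become string diagrams in which the antipode's ``S-bend'' (the cap-cup built from $\monnul$, $F\unit$, $\comonoidal$, $\monoidal$, $F\counit$, $\comnul$ using the duality $\prep{x} \ladj x$ in $A$) is inserted into one or two of the parallel action strands.

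First I would record the discharged forms of the two canonical idempotents $r$ and $t$ appearing on the right-hand sides; these should each reduce to a short diagram involving only $\monnul$, $\comnul$, the counit/unit of $F$, and a single $F$-sheath bend, essentially the ``half'' of the $S$-diagram that does not get fed back into a second action. (This is the Tannaka-object incarnation of Schauenburg's observation, cited in Section~\ref{internal-sepfrob}, that $t$ splits through an internal separable Frobenius algebra.) Then for $H \star S = t$: the discharged form of $H \star S$ has the identity acting on one strand and $S$ acting on the other, recombined through $\monoidal \circ \comonoidal$ on an $F(x)$-wire; separability of $F$ collapses that $\monoidal \circ \comonoidal$ to the identity, and the resulting diagram is manipulated — using the triangle identities for $\prep{x} \ladj x$ in $A$ (dinaturality of unit and counit), the Frobenius axioms~\ref{frobenius-axioms} to slide the $F$-sheath, and naturality of $\alpha$ and of the monoidal/comonoidal structure maps — until it is literally the diagram for $t$. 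The computation for $S \star H = r$ is the mirror image, reversing the roles of unit/counit and of left/right, and this is exactly where the earlier remark about the covariant-versus-contravariant asymmetry resurfaces: one of the two will be ``easy'' and the other will need an extra application of separability or of the weak unit/counit axioms already established in Theorem~\ref{weak-bialgebra-thm}.

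For the middle axiom $S \star H \star S = S$, I would expand the left side as a three-strand convolution: two copies of the $S$-bend flanking one identity strand, all three actions performed and then recombined through the iterated $\monoidal$ and $\comonoidal$. Here I expect to first use separability twice (once on each internal $\monoidal \circ \comonoidal$) to remove the recombination clutter, then feed in the two already-proved identities $H \star S = t$ and $S \star H = r$ in the form $S \star H \star S = (S \star H) \star S = r \star S$ and $= S \star (H \star S) = S \star t$; the claim then reduces to showing $r \star S = S$ (equivalently $S \star t = S$), i.e. that $S$ is absorbed by the relevant canonical idempotent on the appropriate side. Graphically this should be the statement that composing the $S$-bend with the idempotent's half-bend cancels by a triangle identity in $A$ together with separability of $F$.

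The main obstacle will be the bookkeeping in $H\star S = t$ (and its mirror): getting the $F$-sheath to deform — via the Frobenius axioms~\ref{frobenius-axioms} and the monoidal associativity/unitality constraints — from the shape produced by the convolution-of-$S$-with-identity into the shape of the canonical idempotent, while simultaneously discharging the $A$-side duality bends through the triangle identities, is precisely the kind of intertwined naturality argument the paper keeps flagging as ``immediate graphically but fiddly in symbols''. I expect the proof to consist mostly of a few full-width figures exhibiting these diagram chains, with prose pointing to exactly which axiom (separability of $F$, a Frobenius axiom, a triangle identity in $A$, naturality of $\alpha$, or a weak unit/counit axiom from Theorem~\ref{weak-bialgebra-thm}) licenses each equality, and a closing remark that right duals in $A$ would additionally make $S$ invertible via the ``Z-bend'' variant already mentioned.
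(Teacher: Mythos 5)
Your overall strategy coincides with the paper's: pass to discharged forms, compute $S \star \tan F$, $\tan F \star S$, and the idempotents $r$ and $t$ as string diagrams, and match them using separability of $F$, the Frobenius axioms, triangle identities in $A$, and naturality of $\alpha$ --- this is exactly what Figures~\ref{antipodeconvolutions} and~\ref{ants} do. The one place you diverge is the middle axiom: the paper computes the discharged form of the three-fold convolution $S \star \tan F \star S$ head-on (Figures~\ref{triples1} and~\ref{triples2}, invoking the definitions of multiplication, antipode and comultiplication, naturality and braid axioms, two instances of separability, monoidality and comonoidality of $F$, naturality of $\alpha$, and a triangle identity in $A$), whereas you propose to use associativity of convolution together with the already-established $S \star \tan F = r$ to reduce the claim to $r \star S = S$ (equivalently $S \star t = S$). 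That reduction is legitimate --- convolution on $\V(\tan F,\tan F)$ is associative because $\mu$ is associative and $\Delta$ coassociative --- and the identity $r \star S = S$ does hold; but note that it is not one of the axioms you have already verified, so it still requires its own diagram chase of essentially the same flavour (separability of $F$ plus a triangle identity in $A$), and the saving over the paper's direct computation is therefore modest rather than structural. Everything else in your sketch --- the role of the covariant/contravariant asymmetry in which of the first two axioms is ``easy'', and the closing remark that right duals in $A$ would make $S$ invertible via the Z-bend --- matches the paper.
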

\begin{proof}
From Theorem~\ref{weak-bialgebra-thm}, we know that $\tan F$ is a weak bialgebra; we must simply verify the three Weak Antipode Axioms (Equations~\ref{weak-antipode-axioms}).
The pair of calculations in Figure~\ref{antipodeconvolutions} compute the discharged forms of $S \star \tan F$ and $\tan F \star S$; and the
discharged forms of the idempotents $r$ and $t$ are computed in Figure~\ref{ants}. Comparing the two figures shows $S \star \tan F = r$ and $\tan F \star S = t$
as desired.
\fullfigure{antipodeconvolutions.pdf}{Calculations of $S \star \tan F$ and $\tan F \star S$}{antipodeconvolutions}	
\fullfigure{ants.pdf}{``Source'' and ``Target'' maps}{ants}		
Finally, we must show that $S \star \tan F \star S = S$; this is shown in Figures~\ref{triples1}~and~\ref{triples2}.
\fullfigure{newtriples-1.pdf}{The calculation showing $S \star \tan F \star S = S$ (part 1 of 2). The equalities hold by: definition of the	
multiplication on $\tan F$; the definition of the antipode on $\tan F$; a slew of naturalities and braid axioms; and, finally,
the definition of the comultiplication.}{triples1}
\fullfigure{newtriples-2.pdf}{The calculation showing $S \star \tan F \star S = S$ (part 2 of 2). The equalities hold by: two instances of 	
separability of $F$ and one each of $F$ being monoidal and comonoidal; naturality of $\alpha$; a triangle identity in $A$; and, finally, the definition
of the antipode of $\tan F$.}{triples2}
\end{proof}

\begin{corollary} Let $F \colon A \too \V$ be a separable Frobenius monoidal functor of reconstruction type, and suppose that $A$ has left duals. If $F$ is moreover strong
monoidal, then the weak Hopf algebra $\tan F$ constructed in Theorem~\ref{weak-hopf-algebra-thm} is a (non-weak) Hopf algebra.\end{corollary}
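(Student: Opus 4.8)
\emph{Proof proposal.} The plan is to read the corollary off from Theorem~\ref{weak-hopf-algebra-thm} and Corollary~\ref{bialgebra-corollary} together. By Theorem~\ref{weak-hopf-algebra-thm} the map $S$ of Equation~\ref{def-of-antipode-eq} equips $\tan F$ with a weak Hopf structure, so in particular $S \star \tan F = r$ and $\tan F \star S = t$ (Equation~\ref{weak-antipode-axioms}), where $r$ and $t$ are two of the four canonical idempotents of Section~\ref{canonical-idempotents-section}. By Corollary~\ref{bialgebra-corollary}, since $F$ is strong monoidal the underlying weak bialgebra $\tan F$ is an honest (non-weak) bialgebra. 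It therefore suffices to show that $S$ is a genuine two-sided convolution inverse of the identity, i.e.\ that $S \star \tan F = \eta\epsilon = \tan F \star S$ (Equation~\ref{antipode-axioms}); and in view of the weak antipode axioms this reduces to the single claim that $r = t = \eta\epsilon$.

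First I would prove that claim. It is standard (B\"ohm, Nill, and Szlach\'anyi~\cite{BohmNillSzlachanyi99}; Pastro and Street~\cite{PastroStreet09}) that in a weak bialgebra satisfying the Strong Unit Axiom (Equation~\ref{unit-axiom}) the source and target counital maps collapse to $h \mapsto \epsilon(h)\eta$, and dually the Strong Counit Axiom (Equation~\ref{counit-axiom}) collapses the unital idempotents; since an honest bialgebra satisfies all of these, all four canonical idempotents of $\tan F$ equal $\eta\epsilon$, the neutral element for convolution. Alternatively, and more in keeping with the methods of this paper, one can take the discharged forms of $r$ and $t$ recorded in Figure~\ref{ants} and feed in strength of $F$: the identities making $\monoidal$ and $\comonoidal$ mutually inverse (one of which is separability, already used in Theorem~\ref{weak-bialgebra-thm}) and $\monnul$ and $\comnul$ mutually inverse straighten out the nested $F$-region in each discharged form, leaving exactly the discharged form of $\eta\epsilon$.

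Given $r = t = \eta\epsilon$, the weak antipode axioms $S \star \tan F = r$ and $\tan F \star S = t$ become precisely the antipode axioms (Equation~\ref{antipode-axioms}), so $(\tan F, S)$ is a non-weak Hopf algebra; the middle weak antipode axiom $S \star \tan F \star S = S$ is then automatic and need not be re-verified. The \textbf{main obstacle} is exactly the collapse $r = t = \eta\epsilon$: on the abstract route the only subtlety is matching the graphical idempotents of Section~\ref{canonical-idempotents-section} with the classical counital maps so that the cited fact applies, while on the graphical route one must track carefully how strength of $F$ trivialises the functor-region topology in Figure~\ref{ants}, which is routine but calls for the same bookkeeping discipline as the proof of Theorem~\ref{weak-bialgebra-thm}. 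Everything else is already supplied by the two preceding results.
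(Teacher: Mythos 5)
Your proposal is correct and follows essentially the same route as the paper: invoke Corollary~\ref{bialgebra-corollary} to get that $\tan F$ is a (non-weak) bialgebra, observe that the canonical idempotents $r$ and $t$ then collapse to the convolution identity $\eta\epsilon$, and conclude that the weak antipode axioms degenerate into the ordinary ones. The only difference is that you supply a justification (classical or graphical) for the collapse $r = t = \eta\epsilon$, which the paper simply asserts; that extra detail is sound and the claim is indeed standard for non-weak bialgebras.
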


\begin{proof} From Corollary~\ref{bialgebra-axiom}, we know that $\tan F$ is a bialgebra when $F$ is strong monoidal. Therefore, the canonical idempotents $r$ and $t$ which
appear in the weak antipode axioms are both equal to the convolution identity, $\eta\epsilon$, and thus the weak antipode axioms (Equations~\ref{weak-antipode-axioms}) 
degenerate into the non-weak antipode axioms (Equations~\ref{antipode-axioms}).
\end{proof}

\section{Representations of Weak Bialgebras and Weak Hopf Algebras}\label{def-of-mod}

Here we recall the theory of the representations of a weak bialgebra, adapted slightly to our purposes from Nill~\cite{Nill99}, B\"ohm and Szlachanyi~\cite{BohmSzlachanyi00}, and
Pastro and Street~\cite{PastroStreet09}.

Let us now suppose that our base category $\V$ has given splittings for idempotents; that is, an equivalence $Q\V \simeq \V$.
Let a weak bialgebra $H$ in $\V$ be given. We consider the category of left $H$-modules, which we 
write as $\Hmod$; its objects are pairs $(a,\alpha)$, where $a$ is an object of $\V$ and $\alpha \colon H \tens a \too a$ is a unital, associative action 
of $H$ on $a$. Its morphisms $f:(a,\alpha) \too (b,\beta)$ are merely morphisms $f \colon a \too b$ in $\V$ which respect $\alpha$ and $\beta$ in the obvious 
way. Certainly this is a perfectly good category and the obvious mapping $(a,\alpha) \mapsto a$ describes (the object-part of) a perfectly good functor  
$U_H \colon \Hmod \too \V$. 
It is an obvious idea to give $\Hmod$ a monoidal product by defining: 
\[ (a,\alpha) \tens_H (b,\beta) = \left( a \tens b, \begin{array}{c} \includegraphics[scale=0.1]{action-two.pdf} \end{array} \right)  \] 
This action is associative but fails to be unital. To prove that it unital, we would have to show that
\[ \begin{centering} \includegraphics[scale=0.2]{canonical-two-idempotent-equation.pdf} \end{centering} \] 
Since $\begin{array}{c} \includegraphics[scale=0.1]{bifurc-unit.pdf} \end{array}$  
does \emph{not} necessarily hold in a weak bialgebra, this last equality generally does not hold. However, the 
left-hand-side of the above is nevertheless an idempotent on $a \tens b$, as an easy calculation shows. We write this idempotent as $\nabla_{a,b}$, abbreviating 
it to $\nabla$ when context permits. 
 
We define a new category of modules for $H$, which we write as $\Hmod_Q$. The objects of $\Hmod_Q$ are triples 
$(a, \alpha \colon H \tens a \too a, a' \colon a \too a)$, where $a$ is an object of $\V$, where $a'$ is an idempotent on $a$, and where $\alpha$ is
an action which is associative and ``unital-up-to-$a'$''; that is, we insist on $\alpha(\eta \tens a)= a'$. This of course means that $a'$
is redundant; it can be obtained from $\alpha$ and the unit of $H$. Moreover, it can be readily deduced that $a'$ obtained in this way must necessarily
be idempotent and satisfy $\alpha(H \tens a')= \alpha = a'\alpha$.

Now, we can define a monoidal product on $\Hmod_Q$ by: 
\[ \left( a, \pic{action-alpha.pdf},a' \right) \tens_H \left( b, \pic{action-beta.pdf}, b' \right)  
= \left( a \tens b, \pic{action-two.pdf}, \nabla_{a,b} \right) \] It may seem surprising to note that $a'$ and $b'$ do not feature on the right-hand
side of this definition; however, since $a'$ satsfies $\alpha(H \tens a')= \alpha = a'\alpha$ (and similarly for $b'$), this is not so strange.

It is routine to verify that the equivalence $Q\V \simeq \V$ lifts to an equivalence
$\Hmod_Q \simeq \Hmod$, but we shall nevertheless continue to work in $Q\V$ and $\Hmod_Q$ for clarity.
 
The unit $\boot_H$ for the above monoidal structure is obtained using the canonical idempotent $t$ defined in 
Section~\ref{canonical-idempotents-section}, namely: 
\[ \boot_H = \left( H, \pic{H-as-unit.pdf}, t \right) \] 
This choice is arbitrary and unimportant, since, as we have remarked above in Proposition~\ref{canonical-idempotents}, all four idempotents are 
isomorphic. However, the precise form of the nullary monoidal constraint isomorphisms will depend on this choice; here, they are:
\[ \includegraphics[scale=0.15]{monoidal-isos-1.pdf} \] 
\[ \includegraphics[scale=0.15]{monoidal-isos-2.pdf} \]
\noindent We omit the (routine) verifications that these are well-defined as maps of actions and maps of idempotents.
 
With these definitions, $U_H \colon \Hmod_Q \too Q\V$ inherits a separable Frobenius monoidal structure, with both binary structure maps given by 
$\nabla$ and nullary structure maps given by:
\[ (\boot,\boot) \oto{\eta} (H,t) = U_H\boot_H \qquad \qquad U_H\boot_H = (H,t) \oto{\epsilon} (\boot,\boot) \]
Verifying the various axioms is routine.


\subsubsection{Representations of Weak Hopf Algebras}\label{rep-weak-hopf}

If our weak bialgebra $H \in \V$ is known to be a weak \emph{Hopf} algebra, then its category of representations $\Hmod$ is ``as autonomous as $\V$ is'';
that is, if an object $a$ has a dual in $\V$, every representation $(a,\alpha \colon H \tens a \too a)$ of $H$ has a dual in $\Hmod$. For details,
see Section~4 of Pastro~and~Street~\cite{PastroStreet09}, although note that the treatment there uses corepresentations instead of representations.
In particular, if $\V$ is autonomous, then $\Hmod_Q$ is also autonomous.

\subsection{Extension of the Tannaka Construction and Representation to Morphisms}\label{extension-to-morphisms}

Given a separable Frobenius monoidal functor $F \colon A \too \V$ of reconstruction type, we have described in Section~\ref{def-of-tan} a method for
obtaining a weak bialgebra $\tan F$ in $\V$. Similarly, given a weak bialgebra $H$ in a braided, monoidally complete category $\V$, the construction in Section~\ref{def-of-mod}
produces a separable Frobenius monoidal functor $U \colon \Hmod \too \V$ of reconstruction type. Of course, we would like to construe these constructions as the object parts
of functors; this will require defining a suitable category of functors into $\V$ and a suitable category of weak bialgebras in $\V$.

\begin{definition} Fix a braided, monoidally complete category $\V$.
Denote by $\recon{\V}$ the category whose objects are separable Frobenius monoidal functors of reconstruction type into $\V$. 
If $F \colon A \too \V$ to $G \colon C \too \V$ are two such functors, then a morphism $H \colon F \too G$ in $\recon{\V}$ is a separable Frobenius monoidal functor (not necessarily of
reconstruction type) $H \colon A \too C$ for which $GH = F$.
\end{definition} Another way to view this category is as the full subcategory of the slice category $\sfm/\V$ determined by the morphisms of reconstruction type; we use the 
``modified slash'' notation to emphasize that $\recon{\V}$ is \emph{not} itself a slice category, since the objects in $\recon{\V}$ are required to be of reconstruction type but
the morphisms are not.

\begin{definition} Fix $\V$ as in the above definition. We denote by $\reconstar{\V}$ the subcategory of $\recon{\V}$ determined by the functors of reconstruction type whose
domains have left duals.
\end{definition}

However, for the morphisms between weak bialgebras, we will need a not-so-well-known notion.
\begin{definition}
Let $H$ and $J$ be weak bialgebras in $\V$, and let $f \colon H \too J$ be an arrow in $\V$. We say that $f$ is a 
\emph{weak morphism of weak bialgebras}~(compare~\cite{Szlachanyi03}, Proposition~1.4; the notion here is the union of the notions there of ``weak left morphism''
and ``weak right morphism'')
if it: \begin{enumerate}
\item Commutes with the four canonical idempotents on $H$ and $J$,
\item Strictly preserves the multiplications and units of $H$ and $J$, and
\item Weakly preserves the comultiplications of $H$ and $J$ in the sense that:
\[ \includegraphics[scale=0.2]{weak-comultiplication.pdf} \]
\end{enumerate}
\end{definition}

The asymmetry between the preservation of multiplication and preservation of comultiplication corresponds to the choice of modules instead of
comodules in the representation theory earlier. Had we chosen to work with comodules, we would instead consider the dual notion of morphisms which
strictly preserve the comultiplication and counit but only weakly preserve the multiplication.

It is not too difficult to prove that the composite of two weak morphisms is a weak morphism. The first two conditions pose no difficulty; as for
the third condition, we prove the second equality by the following:

\fullwidth{functoriality-of-weak-morphisms.pdf}

In counter-clockwise order from top-left, the equalities hold since: $g$ weakly preserves comultiplication; $f$ weakly preserves comultiplication; 
$g$ strictly preserves multiplication; associativity of multiplication and some braid axioms; $g$ weakly preserves comultiplication; $g$ strictly 
preserves units.

The first equality in condition~3 is proved similarly. In sum, weak morphisms between weak bialgebras in a braided monoidal category $\V$ form a category which we
write as $\wba \V$. We define a weak morphism of weak Hopf algebras to be a weak morphism between underlying weak bialgebras, and we denote this category by
$\wha \V$.

\begin{observation} 
Every strong morphism of weak bialgebras (that is, one strictly preserving the units, counits, multiplications and comultiplications)
is a weak morphism of weak bialgebras, and, moreover, if the weak bialgebra is in fact a usual bialgebra, then the notions of weak and strong
morphism coincide. In particular, this means that we have inclusions $\ba \V \too \wba \V$ and $\ha \V \too \wha \V$.
\end{observation}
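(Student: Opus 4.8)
The Observation makes three assertions: (i) every strong morphism of weak bialgebras is a weak morphism of weak bialgebras; (ii) between ordinary (non-weak) bialgebras the weak and strong notions of morphism agree; and (iii) these combine to give inclusions of categories $\ba\V\too\wba\V$ and $\ha\V\too\wha\V$. The plan is to establish (i) by checking the three defining conditions of a weak morphism one at a time, to establish (ii) by mining strict preservation of $\Delta$ and of $\epsilon$ out of the weak data once the objects are non-weak, and then to assemble (iii) formally, using that a non-weak bialgebra is \emph{a fortiori} a weak bialgebra and a Hopf algebra \emph{a fortiori} a weak Hopf algebra.

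For (i), let $f\colon H\too J$ strictly preserve $\mu$, $\eta$, $\Delta$, $\epsilon$. Condition~(2) holds by hypothesis. For condition~(1): each of the four canonical idempotents $s,t,z,r$ of Section~\ref{canonical-idempotents-section} is a composite built from $\mu,\eta,\Delta,\epsilon$ and instances of the braiding of $\V$, and $f$ commutes with the former strictly by hypothesis and with the latter by naturality of the braiding, hence with any such composite. For condition~(3): the weak comultiplication condition asserts that $\Delta_J f$ agrees with $(f\tens f)\Delta_H$ up to multiplication by the distinguished element $\Delta_J\eta_J$ of $J\tens J$ --- on the left in one of the two displayed equations, on the right in the other. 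A strong $f$ satisfies $\Delta_J f=(f\tens f)\Delta_H$ outright, so it remains only to see that $\Delta_J(fh)$ is absorbed by $\Delta_J\eta_J$ on either side; but this holds in \emph{any} weak bialgebra, because the Bialgebra Axiom (Equation~\ref{weak-bialgebra-axiom}) makes $\Delta_J$ an algebra map $J\too J\tens J$ while $\eta_J$ is its two-sided unit, so $\Delta_J\eta_J\cdot\Delta_J(fh)=\Delta_J(\eta_J\cdot fh)=\Delta_J(fh)$ and symmetrically on the right. Hence $f$ is a weak morphism.

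For (ii), let $f\colon H\too J$ be a weak morphism between non-weak bialgebras; we must upgrade it to a strong one, i.e.\ recover strict preservation of $\Delta$ and of $\epsilon$. For $\Delta$: since $J$ is non-weak its unit is grouplike, so $\Delta_J\eta_J=\eta_J\tens\eta_J$ is the unit of $J\tens J$, the correction term in condition~(3) is trivial, and $\Delta_J f=(f\tens f)\Delta_H$. For $\epsilon$: condition~(1) gives in particular $f\circ r_H=r_J\circ f$, and on a non-weak bialgebra $r$ collapses to the convolution identity $\eta\epsilon$ (as in the excerpt's proof of the Hopf corollary), so $\eta_J\epsilon_H=f\eta_H\epsilon_H=\eta_J\epsilon_J f$; applying $\epsilon_J$ and invoking the Barbell Axiom (Equation~\ref{barbell-axiom}) for $J$, which makes $\epsilon_J$ a retraction of $\eta_J$, cancels $\eta_J$ and yields $\epsilon_H=\epsilon_J f$. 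Thus $f$ is strong, and together with (i) the two notions of morphism coincide over non-weak bialgebras.

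For (iii): the weak (co)unit axioms are literal weakenings of the strong ones, so every bialgebra is in particular a weak bialgebra; likewise every Hopf algebra is a weak Hopf algebra, since when $r=t=\eta\epsilon$ the weak antipode axioms (Equations~\ref{weak-antipode-axioms}) become the ordinary antipode axioms (Equations~\ref{antipode-axioms}), and a (weak) Hopf morphism is by definition a (weak) bialgebra morphism of the underlying objects. By (i), the assignment sending a bialgebra to itself and a bialgebra morphism to the corresponding weak morphism is well defined; it clearly preserves identities, and it preserves composites because the composite of two strong morphisms is strong, hence weak, and agrees with their composite as weak morphisms (weak morphisms having been shown composable just above). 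So we obtain a faithful functor $\ba\V\too\wba\V$, which (ii) shows to be full, and the same recipe restricted to objects carrying an antipode gives $\ha\V\too\wha\V$. The one genuinely delicate point is the $\epsilon$-half of step (ii): the definition of weak morphism is silent about counits, so strict counit-preservation must be extracted from condition~(1), and the argument goes through only because for a non-weak bialgebra the relevant canonical idempotent is exactly $\eta\epsilon$ and $\eta_J$ is split by $\epsilon_J$.
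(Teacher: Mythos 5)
Your proof is correct, and it supplies in full the verification that the paper leaves implicit (the statement is given as an Observation with no proof). The three-part decomposition is the natural one, and each step checks out: for condition~(3) the absorption of the correction term $\Delta_J\eta_J$ follows, as you say, from the Bialgebra Axiom making $\Delta_J$ multiplicative with $\eta_J$ unital, so a strictly comultiplicative $f$ automatically satisfies both displayed equalities; for condition~(1) the four canonical idempotents are indeed composites of $\mu,\eta,\Delta,\epsilon$ and braidings, all of which $f$ commutes with. You also correctly isolate the one genuinely delicate point, which the paper glosses over entirely: the definition of weak morphism says nothing about counits, so the coincidence of weak and strong morphisms over non-weak bialgebras requires \emph{extracting} $\epsilon_J f=\epsilon_H$ from somewhere, and your route --- condition~(1) applied to $r$, which collapses to $\eta\epsilon$ on a non-weak bialgebra (as the paper itself notes in the proof of the Hopf corollary), followed by cancellation of $\eta_J$ via the Barbell Axiom --- is exactly the right mechanism. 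The only caveat is that the precise graphical form of condition~(3) is given only as a figure, so your reading of it (correction by $\Delta\eta$ multiplied on one side or the other) is an interpretation; but your absorption argument is robust under every standard variant of that condition, since it only uses that $f$ and $f\tens f$ are algebra maps and that $\Delta$ of a unit acts as a two-sided identity on the image of $\Delta$.
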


\subsection{Extension of Tannaka Construction to morphisms}
In this section we extend the Tannaka construction described in Section~\ref{def-of-tan} to a functor \[ \tan \colon \recon{\V} \too \wba \V \]
Suppose that \[ \bfig \Vtriangle[A`C`\V;H`F`G] \efig \] is a morphism $H \colon F \too G$ in $\recon{\V}$.
We must obtain from such a commuting triangle a weak morphism of weak Hopf algebras $\tan H \colon \tan G \too \tan F$. A morphism from $\tan G$ into $\tan F$ is the same thing 
as an action of $\tan G$ on $F$; we take here the canonical action \[ \tan G \tens F = \tan G \tens GH \oto{\alpha H} GH = F \]  
Graphically, we write this as: \[ \includegraphics[scale=0.2]{def-of-tanH.pdf} \] where we have written $F$ as green, $H$ as red, and $G$ as blue. 
Note that the boundaries of this definition are equal precisely because $F = GH$. 
 
We must verify that $\tan H$ strictly preserves the monoidal structures of $\tan G$ and $\tan F$ and weakly preserves their comultiplication. 
As for the unit, it is immediate: \[ \includegraphics[scale=0.11]{tanH-preserves-unit.pdf} \] 
And the multiplication is similarly easy: 
\fullwidth{tanH-preserves-multiplication.pdf}
 
However, as expected for a weak morphism of weak bialgebras, 
$\tan H$ need not strictly preserve the comultiplications. On the one hand, we compute the discharged form 
of $\tan G \oto{\Delta} \tan G \tens \tan G \oto{\tan H \tens \tan H} \tan F \tens \tan F$: 
\fullwidth{tanH-comultiplication-1.pdf}
Whereas, on the other hand, we compute the discharged form of $\tan G \oto{\tan H} \tan F \oto{\Delta} \tan F \tens \tan F$: 
\fullwidth{tanH-comultiplication-2.pdf}
Certainly the above shows that, if $H$ is strong monoidal, $\tan H$ will preserve the comultiplications strictly. 
 
As an aside, we investigate whether $\tan H$ preserves the counits. On the one hand, we compute: 
\[ \includegraphics[scale=0.1]{tanH-counit-1.pdf} \] 
And on the other hand, we compute: 
\[ \includegraphics[scale=0.1]{tanH-counit-2.pdf} \] 
So we see that, for $\tan H$ to preserve the counits, it suffices for $H$ to be strong; specifically, for the composite 
$\boot \oto{\monnul} H\boot \oto{\comnul} \boot$ to 
be the identity. 
 
We proceed to show that $\tan H$ weakly respects the comultiplications of $\tan G$ and $\tan H$. We show the second equality of Condition~3
in the definition of weak morphism, the first equality is proved similarly. First, we compute the discharged form of 
$\boot \oto{\eta} \tan G \oto{\delta} \tan G \tens \tan G$ as: 
\[ \includegraphics[scale=0.1]{tanH-comultiplication-supplemental.pdf} \] 
Second, exploiting the basic fact that the discharged form of a product is the composite of discharged forms, we see that the discharged 
form of \[ \includegraphics[scale=0.1]{weak-comultiplication-1.pdf} \] is: \fullwidth{tanH-comultiplication-weak.pdf}
where we have used the fact that $G$ is separable followed by the naturality of the canonical action of $\tan G$ on $G$. Thus, $\tan H$
respects the comultiplications of $\tan H$ and $\tan G$ in the sense required of a weak morphism of weak bialgebras.
 

Finally, we must check that $\tan H$ commutes with the four canonical idempotents. 
We show that $(\tan H)r = r(\tan H)$ by the following chain of calculation:
\[ \includegraphics[scale=0.11]{tanH-preserves-r.pdf} \] Counter-clockwise from top-left, the equalities hold
by: the discharged form of $r$ from the left-hand column of Figure~\ref{ants}; the definition of $\tan H$; naturality of
action and the monoidality of $F$; the discharged form of $r$ once again; and finally the definition of $\tan H$ again.
The proofs that $\tan H$ respects the other three idempotents are similar.

Thus, we have that, for $H$ an arrow in $\recon{\V}$, the arrow $\tan H$ is a weak morphism of weak bialgebras. It is routine to verify that
$\tan$ defined on morphisms in this way preserves composition and identities; hence, we have a functor: 
\[ \displaystyle \tan \colon \recon{\V} \too \left( \wba \V \right)^\op \] And, if we restrict to the full subcategory of $\recon{\V}$ 
consisting of functors with autonomous domain, we have a functor: \[ \displaystyle \tan \colon \reconstar{\V} \too \left( \wha \V \right)^\op \]

\subsection{Extension of the Representation Theory to Morphisms}\label{mod-on-morphisms}

Let $f \colon H \too J$ be a weak morphism of weak bialgebras. We define $f^* = Q(f\text-\mod) \colon \Jmod_Q \too \Hmod_Q$ to have action on objects:
\[ f^*\left(a, \pic{action-alpha.pdf},a' \right) = \left(a,\pic{action-alpha-f.pdf},a' \right) \] and to be the identity on morphisms.

Since $f$ strictly preserves the unit and the multiplication, $f^*$ takes associative and unital $J$-modules to associative and unital
$H$-modules, as required. It is clear that, 
as mere functors, $U_Hf^* = U_J$. What is considerably more complicated is the separable Frobenius monoidal structure on $f^*$. Let us agree to 
abbreviate the right-hand side of the above definition as $f^*a$, to simplify notation.

We compute
\begin{align*}
        f^*a \tens_H f^*b &= \left( a, \pic{action-alpha-f.pdf}, a' \right) \tens_H \left( b, \pic{action-beta-f.pdf}, b' \right) \\
                        &= \left( a \tens b, \pic{action-two-ff.pdf}, \pic{action-two-ff-unit.pdf} \right) \\
        f^*(a \tens_J b)  &= f^*\left( \left(a,\pic{action-alpha.pdf},a'\right) \tens_J \left(b,\pic{action-beta.pdf},b'\right) \right) \\
                        &= f^*\left(a \tens b \pic{action-two.pdf}, \nabla_{a,b} \right) 
			= \left(a \tens b \pic{action-two-f.pdf}, \nabla_{a,b} \right)\\
\end{align*}

By condition~3 of $f$ being a weak morphism of weak Hopf algebras, we can view $\nabla_{a,b}$ as a monoidal structure 
$f^*a \tens_H f^*b \too f^*(a \tens_J b)$
as well as a comonoidal structure $f^*(a \tens_J b) \too f^*a \tens_H f^*b$. Moreover, this is clearly separable, since the idempotent on
$f^*(a \tens_J b)$ is $\nabla_{a,b}$. However, since the idempotent on $f^*a \tens_H f^*b$ is \emph{not} equal to $\nabla_{a,b}$, the composite
\[ f^*a \tens_H f^*b \too f^*(a \tens_J b) \too f^*a \tens_H f^*b \] is not necessarily the identity.

Furthermore, for the nullary structure, we compute:
\begin{align*}
        \boot_H &= \left( H, \pic{H-as-unit.pdf}, t \right) \\
        f^*\boot_J &= f^*\left( J, \pic{J-as-unit.pdf}, t \right) \\
                &= \left( J, \pic{J-as-unit-f.pdf}, t \right) \\
\end{align*}

We define $\boot_H \too f^*\boot_J$ to be $ft$ and $f^*\boot_J \too \boot_H$ to be $\pic{nullary-comonoidal.pdf}$. Notice that, 
when $f$ is the identity, both the monoidal and comonoidal structure are $t$; which is to say that $(-)^*$ preserves identities.

It is a somewhat lengthy verification to show that all of of the above maps are well-defined and constitute a separable Frobenius monoidal
structure on $f^*$; we consider the Frobenius axioms themselves (Equations~\ref{frobenius-axioms}), leaving the other details to the reader. To save space, we label each of the
morphisms in the diagrams below with the element of $H \tens H \tens H$ which acts on $a \tens b \tens c$, according to the definition
of~$\nabla$ and the tensor products $\tens_H$ and $\tens_J$. 
From the above definition: \[ \bfig

\square(0,0)<1200,500>[f^*a \tens_H f^*(b \tens_J c)`f^*a \tens_H f^*b \tens_H f^*c`f^*(a \tens_J b \tens_J c)`f^*(a \tens_J b) \tens_H f^*c;
\pic{three-split-down.pdf}`\pic{three-skew-down.pdf}`\pic{three-split-up.pdf}`\pic{three-skew-up.pdf}]

\square(0,+1000)<1200,500>[f^*(a \tens_J b) \tens_H f^*c`f^*a \tens_H f^*b \tens_H f^*c`f^*(a \tens_J b \tens_J c)`f^*a \tens_H f^*(b \tens_J c);
\pic{three-split-up.pdf}`\pic{three-skew-up.pdf}`\pic{three-split-down.pdf}`\pic{three-skew-down.pdf}]
\efig \]

\noindent Easy calculations show that the bottom-left composites of the above are: \[ \pic{three-skew-multiplications.pdf} \]
Furthermore, the top-right composites of the above squares are calculated as: \[ \pic{three-split-multiplications.pdf} \]
Therefore, we see that these squares commute precisely because of the Weak Unit Axioms (Equations~\ref{weak-unit-axioms}) for $J$.

Further calculations show that $(gf)^* = g^*f^*$ as Frobenius monoidal functors; consequently, we obtain a functor:
\[ \displaystyle \mod \colon \left( \wba \V \right)^\op \too \recon{\V} \]

Since weak morphisms between weak Hopf algebras are simply weak morphisms between their underlying weak bialgebras, and strong monoidal
functors between autonomous categories are simply strong monoidal functors between their underlying monoidal categories, this
$\mod$ restricts to a functor:
\[ \displaystyle \mod \colon \left( \wha \V \right)^\op \too \reconstar{\V} \]

\section{The Tannaka Adjunction}\label{adjunction}

In this section, we will show that the functors defined in the previous two sections form an adjunction, specifically:
\[ \bfig

\node a(-500,0)[\displaystyle \recon{\V}]
\node b(+500,0)[\left( \wba \V \right)^\op]

\arrow|a|/{@{>}@/^1em/}/[a`b;\tan]
\arrow|b|/{@{>}@/^1em/}/[b`a;\mod]
\place(0,0)[\bot]

\efig \]

\noindent Furthermore, there is a restricted adjunction:
\[ \bfig

\node a(-500,0)[\displaystyle\reconstar{\V}]
\node b(+500,0)[\left( \wha \V \right)^\op]

\arrow|a|/{@{>}@/^1em/}/[a`b;\tan]
\arrow|b|/{@{>}@/^1em/}/[b`a;\mod]
\place(0,0)[\bot]

\efig \]

\subsubsection{Units and Counits}

Let $H$ be a weak Hopf algebra in $\V$. We define a unit $\eta \colon H \too \tan U_H$, where $U_H \colon \Hmod \too \V$ is the forgetful functor.
Specifically, we define $\eta$ to correspond to the obvious action $\tilde \alpha \colon H \tens U_H \too U_H$ whose component at an $H$-module
$(A,\alpha)$ is $\alpha$. This is readily checked to be natural in $H$, and a \emph{strong} morphism of weak bialgebras; for instance, the 
following diagram shows that $\eta$ respects the counits:
\[ \bfig

\node a(-1500,+1000)[H]
\node b(+500,+1000)[\tan U_H]
\node c(-1500,+500)[H \tens \boot]
\node d(+500,+500)[\tan U_H \tens \boot]
\node e(-500,-500)[H \tens U_H \boot_H]
\node f(+500,0)[\tan U_H \tens U_H \boot_H]
\node g(-1500,-500)[H \tens H]
\node h(+500,-500)[U_H\boot_H]
\node i(-1500,-1000)[H]
\node j(-500,-1000)[H]
\node k(+500,-1000)[\boot]

\place(-500,+750)[\rulaw]
\place(-1125,-250)[\phi_0]
\place(-1000,-750)[\tilde \alpha]
\place(-500,-1250)[]
\place(+175,-850)[\psi_0]
\place(+175,-350)[\eta]

\arrow|a|[a`b;\eta]
\arrow|m|[a`c;\rulaw^{-1}]
\arrow|m|[c`d;\eta \tens \boot]
\arrow|m|[b`d;\rulaw^{-1}]
\arrow|m|[c`e;H \tens \phi_0]
\arrow|m|[d`f;\tan U_H \tens \phi_0]
\arrow|m|[e`f;\eta \tens U_H \boot_H]
\arrow|m|[f`h;\alpha]
\arrow|m|[e`h;\tilde \alpha]
\arrow|m|[c`g;H \tens \eta]
\arrow/=/[e`g;]
\arrow/=/[h`j;]
\arrow|m|[g`i;\mu]
\arrow|m|[i`j;t]
\arrow|m|[h`k;\psi_0]
\arrow|m|[j`k;\epsilon]
\arrow|r|/{@{>}@/^5em/}/[b`k;\epsilon]
\arrow/{@{=}@/_5em/}/[a`i;]
\arrow|b|/{@{>}@/_5em/}/[i`k;\epsilon]

\efig \]

The irregular central cell commutes since $\tens$ is functorial; the cell marked $\rulaw$ commutes by naturality of $\rulaw$; the left-hand bubble
commutes since $H$ is a unital algebra; the right-hand bubble commutes by definition of $\epsilon$; the cell marked $\phi_0$ commutes by definition
of $\phi_0$; the cell marked $\eta$ commutes by definition of $\eta$; the cell marked $\tilde \alpha$ commutes by definition of $\tilde \alpha$, since
the tensor unit $\boot_H$ in $\Hmod$ is $(H,t\mu,t)$; the lower bubble is an easy calculation; and the cell labelled $\psi_0$ commutes by the definition
of $\psi_0$ given in Section~\ref{def-of-mod}.

Let $F \colon A \too \V$ be a separable Frobenius functor of reconstruction type. We define a (contravariant) counit 
$\epsilon_F \colon A \too (\tan F)\text-\mod$ by taking
every object $x$ of $A$ to $Fx$ equipped with the canonical $\tan F$ action. Specifically:
\[ \epsilon x = \left( Fx, \tan F \tens Fx \oto{\alpha} Fx, Fx \right) \]
Given this, we compute: \begin{align*}
        \epsilon(x \tens y) 
                &= \left( F(x \tens y), \tan F \tens F(x \tens y) \oto{\alpha} F(x \tens y), F(x \tens y) \right) \\
                &= \left( F(x \tens y), \pic{tanF-act-1.pdf}, \pic{tanF-act-6.pdf} \right) \\
        \epsilon x \tens \epsilon y
                &= \left( Fx, \tan F \tens Fx \oto{\alpha} Fx, Fx \right) \tens \left( Fy, \tan F \tens Fy \oto{\alpha} Fy, Fy \right) \\
                &= \left( Fx \tens Fy, \pic{tanF-act-2.pdf}, \pic{tanF-act-3.pdf} \right) \\
                &= \left( Fx \tens Fy, \pic{tanF-act-4.pdf}, \pic{tanF-act-5.pdf} \right)
\end{align*} We therefore take the binary monoidal and comonoidal structures on $\epsilon$ to be those of $F$, this is well-defined as a map
of actions and a map of idempotents precisely because $F$ is separable. 

As for the nullary monoidal and comonoidal structures on $\epsilon$, we compute: \begin{align*}
        \epsilon \boot_A &= \left( F\boot, \tan F \tens F\boot \oto{\alpha} F\boot, F\boot \right) \\
        \boot_{(\tan F)\text-\mod} &= \left( \tan F, \tan F \tens \tan F \oto{\mu} \tan F \oto{t} \tan F, t_{\tan H} \right)
\end{align*} We therefore define the nullary monoidal structure $\phi_0 \colon \boot \too \epsilon \boot$ to be:
\[ \tan F \oto{\rulaw^{-1}} \tan F \tens \boot \oto{\tan F \tens \phi_0} \tan F \tens F\boot \oto{\alpha} F\boot \]
and we define the nullary comonoidal structure $\psi_0 \colon \epsilon \boot \too \boot$
to be the map $F\boot \too \tan F$ corresponding to the action of $F\boot$
on $F$ defined by: \[ F\boot \tens Fx \oto{\phi} F(\boot \tens x) \oto{F\lulaw} Fx \] Graphically, this defines $\psi_0$ as the unique map such that:
\[ \includegraphics[scale=0.2]{def-of-epsilon-psi-0.png} \]		
One checks at some length that $\phi_0$ and $\psi_0$ so defined are maps of idempotents, are maps of actions,
are mutually inverse, form coherent monoidal and comonoidal structures on $\epsilon$, and render $\epsilon U_{\tan F} = F$ as Frobenius functors.
To see that they are mutually inverse, for instance, one first computes: \[ \includegraphics[scale=0.2]{epsilon-nullary-inverse-1.png} \]
and furthermore, that \[ \includegraphics[scale=0.2]{epsilon-nullary-inverse-2.pdf} \] which we recognize from the right-hand-side of Figure~\ref{ants}
as the discharged form of the idempotent $t$ on $\tan F$, as required. Furthermore, $\epsilon$ commutes with $F$ and $U_{\tan F}$ as a Frobenius
functor since $F$ is separable. Note in particular that, although $F$ is not strong, $\epsilon$ \emph{is} strong, since the identity on 
$\epsilon x \tens \epsilon y$ is the idempotent given.

Hence, this $\epsilon$ defines a morphism $F \too U_{\tan F}$ in $\recon{\V}$ and is, in fact, \emph{strong} monoidal.
Furthermore, it is easily seen to be natural in $F$.

We must verify the triangle identities for the adjunction $\tan \ladj \mod$. On the one hand, let a weak bialgebra $H$ be given, we must show
that \[ \mod H \oto{\epsilon_{U_H}} \mod\left( \tan U_H \right) \oto{\mod\left( \eta_H\right)} \mod H \] is the identity. Hence, let 
$(a, \gamma \colon H \tens a \too a)$ in $\mod H$ be given. We compute that 
\begin{align*}
\mod\left(\eta_H\right)\epsilon_{U_H}\left(a,H \tens a \oto{\gamma} a \right)
&= \mod\left(\eta_H\right)\left(a,\tan U_H \tens U_H(a,\gamma) \oto{\alpha} U_H(a,\gamma) \right)  \\
&= \left(a,H \tens U_H(a,\gamma) \oto{\eta_N \tens U_H(a,\gamma)} \tan U_H \tens U_H(a,\gamma) \oto{\alpha} U_H(a,\gamma) \right) \\
&= \left(a,H \tens U_H(a,\gamma) \oto{\alpha} U_H(a,\gamma) \right) \\
&= \left(a,H \tens a \oto{\gamma} a \right)
\end{align*} Where the equalities hold: by definition of $\epsilon$, by definition of $\mod$, and by definition of $\eta$.
On the other hand, let $F \colon A \too \V$ be a separable Frobenius monoidal functor of reconstruction type; we must show that 
\[ \tan F \oto{\eta_{\tan F}} \tan U_{\tan F}
\oto{\tan \epsilon_F} \tan F \] is the identity. For this, consider the following diagram:
\[ \bfig

\hscalefactor{1.24}

\node g(-500,0)[\tan F \tens U_{\tan F}\epsilon_F]
\node h(+500,0)[\tan U_{\tan F} \tens U_{\tan F}\epsilon_F]
\node d(0,-500)[U_{\tan F}\epsilon_F]
\node a(-1000,+500)[\tan F \tens F]
\node b(+1000,+500)[\tan U_{\tan F} \tens F]
\node c(-1000,-500)[F]
\node e(+2000,-500)[F]
\node f(+2000,+500)[\tan F \tens F]

\arrow[a`b;\eta_{\tan F} \tens F]
\arrow[b`f;\tan \epsilon_F \tens F]
\arrow|r|[f`e;\alpha]
\arrow[a`c;\alpha]
\arrow/{@{>}@/^2em/}/[g`h;\eta_{\tan F} \tens U_{\tan F}\epsilon_F]
\arrow|m|[g`d;\alpha]
\arrow|m|[h`d;\alpha]
\arrow/=/[a`g;]
\arrow/=/[b`h;]
\arrow/=/[c`d;]
\arrow/=/[d`e;]

\efig \]

The upper cell commutes since $U_{\tan F}\epsilon_F = F$; the left-hand cell commutes by definition of $\epsilon$; the right-hand cell
commutes by definition of $\tan$; and the central cell commutes by definition of $\eta$. Hence, we have shown that:
\[ \alpha\left(\tan \epsilon_F\eta_{\tan F} \tens F\right) = \alpha \] which, by the universal property of $\alpha$, gives
\[ \tan \epsilon_F \eta_{\tan F} = \tan F \] as desired. Hence, we have that $\tan \ladj \mod$, as desired.

Furthermore, we have noted that the components of $\eta$ and $\epsilon$ are actually strong, and that the functors $\tan$ and $\mod$
are well-defined when simultaneously restricted to strong morphisms of weak bialgebras and strong monoidal functors between separable Frobenius
functors. Therefore, this restricted ``$\tan$'' is left adjoint to this restricted ``$\mod$''. This restricted adjunction is well-known; see, 
for instance, Section~16 of Street~\cite{Street07}.

So, we have proved:
\begin{proposition} There is a linked pair of adjunctions:
\[ \bfig

\hscalefactor{1.3}
\vscalefactor{1.6}

\node a(-500,0)[\recon{\V}]
\node b(+500,0)[\left( \wba \V \right)^\op]

\arrow|a|/{@{>}@/^2em/}/[a`b;\tan]
\arrow|b|/{@{>}@/^2em/}/[b`a;\mod]
\place(0,0)[\bot]

\node aa(-500,-500)[\reconstar{\V}]
\node bb(+500,-500)[\left( \wha \V \right)^\op]

\arrow|a|/{@{>}@/^2em/}/[aa`bb;\tan]
\arrow|b|/{@{>}@/^2em/}/[bb`aa;\mod]
\place(0,-500)[\bot]

\arrow[aa`a;]
\arrow[bb`b;]

\efig \]

Where the diagram commutes serially. Furthermore, we can restrict to non-weak bialgebras and strong monoidal functors, and the above adjunctions restrict 
to the well-known adjunctions:
\[ \bfig

\hscalefactor{1.3}
\vscalefactor{1.6}

\node a(-500,0)[\displaystyle\strongrecon{\V}]
\node b(+500,0)[\left( \ba \V \right)^\op]

\arrow|a|/{@{>}@/^2em/}/[a`b;\tan]
\arrow|b|/{@{>}@/^2em/}/[b`a;\mod]
\place(0,0)[\bot]

\node aa(-500,-500)[\displaystyle\strongreconstar{\V}]
\node bb(+500,-500)[\left( \ha \V \right)^\op]

\arrow|a|/{@{>}@/^2em/}/[aa`bb;\tan]
\arrow|b|/{@{>}@/^2em/}/[bb`aa;\mod]
\place(0,-500)[\bot]

\arrow[aa`a;]
\arrow[bb`b;]

\efig \]

There is an evident quadruple of inclusions from the four categories in this last diagram to the four categories in the first diagram, making in
all a commutative square of adjunctions.

\end{proposition}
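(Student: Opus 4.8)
Nearly everything needed for this proposition has been assembled in the preceding subsections, so the plan is mainly to collect and organise. First, recall that Sections~\ref{extension-to-morphisms} and~\ref{mod-on-morphisms} exhibit $\tan \colon \recon{\V} \too \left(\wba\V\right)^\op$ and $\mod \colon \left(\wba\V\right)^\op \too \recon{\V}$ as functors, and that we have just produced a transformation $\eta$ with components $\eta_H \colon H \too \tan U_H$ (a strong, hence in particular weak, morphism of weak bialgebras, natural in $H$) and a transformation $\epsilon$ with components $\epsilon_F \colon F \too U_{\tan F}$ (a strong monoidal morphism of $\recon{\V}$, natural in $F$), together with verifications of both triangle identities. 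This already establishes the adjunction $\tan \ladj \mod$ between $\recon{\V}$ and $\left(\wba\V\right)^\op$; the top row requires nothing further.

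For the middle row, the plan is to check that all four data restrict along the inclusions $\reconstar{\V} \too \recon{\V}$ and $\left(\wha\V\right)^\op \too \left(\wba\V\right)^\op$. On objects: if $F \colon A \too \V$ lies in $\reconstar{\V}$, so that $A$ has left duals, then $\tan F$ is a weak \emph{Hopf} algebra by Theorem~\ref{weak-hopf-algebra-thm}; conversely, for a weak Hopf algebra $H$ the representations of $H$ acquire duals (Section~\ref{rep-weak-hopf}), whence $U_H \colon \Hmod_Q \too \V$ has domain with left duals and so lies in $\reconstar{\V}$. On morphisms: a morphism of $\reconstar{\V}$ is just a separable Frobenius monoidal functor, to which $\tan$ was already applied to yield a weak morphism of weak bialgebras, which \emph{is} a weak morphism of weak Hopf algebras by definition, and dually for $\mod$. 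The component $\eta_H$ at a weak Hopf algebra is a weak morphism between the underlying weak bialgebras of two weak Hopf algebras, hence a morphism in $\left(\wha\V\right)^\op$; the component $\epsilon_F$ at an $F$ whose domain has left duals is a morphism of $\recon{\V}$ whose source $A$ and whose target (the domain of $U_{\tan F}$) both have left duals, hence a morphism of $\reconstar{\V}$. Since the triangle identities hold in the ambient categories they hold after restriction, giving the second adjunction; and the squares formed with the vertical inclusions commute on the nose because $\tan$, $\mod$, $\eta$, $\epsilon$ are literally the same assignments, only with restricted (co)domains --- this is the meaning of ``commutes serially''.

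For the bottom square one restricts once more, to strong monoidal morphisms. When $F$ is strong monoidal, $\tan F$ is a non-weak bialgebra by Corollary~\ref{bialgebra-corollary}, and a non-weak Hopf algebra if in addition $A$ has left duals; and when $H$ is a non-weak bialgebra the idempotents $t$ and $\nabla_{a,b}$ of Section~\ref{def-of-mod} reduce to identities, so $\Hmod_Q$ is (equivalent to) $\Hmod$ with its evident tensor product and $U_H$ becomes \emph{strong} monoidal. On morphisms, weak and strong morphisms of non-weak bialgebras coincide, and strong monoidal functors between autonomous categories are just strong monoidal functors of the underlying monoidal categories, so $\tan$ and $\mod$ restrict; and $\eta$, $\epsilon$ restrict since their components were already observed to be strong. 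The resulting adjunctions between $\strongrecon{\V}$ and $\left(\ba\V\right)^\op$, and between $\strongreconstar{\V}$ and $\left(\ha\V\right)^\op$, are the classical ones (compare Section~16 of~\cite{Street07}), and assembling the four inclusions gives the asserted commutative square of adjunctions.

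The genuinely delicate work is not in this final packaging, which is pure bookkeeping, but in the material already dispatched above: verifying that $\epsilon_F$ is a well-defined morphism of $\recon{\V}$ (that its nullary structure maps are mutually inverse maps of actions and of idempotents, and that $\epsilon_F U_{\tan F} = F$ as Frobenius functors), and that $\mod$ genuinely lands in $\reconstar{\V}$, i.e.\ that the category of representations of a weak Hopf algebra has left duals whenever $\V$ does. The remaining hazard is merely to confirm that the twice-restricted adjunctions coincide with the classical one referenced in~\cite{Street07}, so that no clash of conventions has crept in.
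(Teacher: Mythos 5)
Your proposal is correct and follows essentially the same route as the paper: the proposition is stated there as a summary of the preceding section, whose content is exactly the unit $\eta_H \colon H \too \tan U_H$, the counit $\epsilon_F \colon F \too U_{\tan F}$, the two triangle identities, and the observation that everything restricts along the Hopf and strong-monoidal inclusions. Your write-up is somewhat more explicit than the paper about the bookkeeping of those restrictions, but the decomposition and the key ingredients are the same.
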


\subsection{The Internal Separable Frobenius Algebra in $\tan F$}

We have seen above that the nullary monoidal and comonoidal structures of the functor $\epsilon$---namely, 
$\phi_0 \colon F\boot \too \tan F$ and $\psi_0 \colon \tan F \too F\boot$---have the property that $\phi_0\psi_0 = t_{\tan F}$ and $\psi_0\phi_0 = F\boot$; 
that is, we have witnessed $F\boot$ as a splitting of $t_{\tan F}$. 
Recall from Section~\ref{internal-sepfrob} that any such splitting $H \oto{\alpha} h \oto{\beta} H$ inherits a separable Frobenius structure 
from the bialgebra structure of $H$; specifically:
\begin{align*} 
        \mu' &= h \tens h \oto{\beta \tens \beta} H \tens H \oto{\mu} H \oto{\alpha} h \\ 
        \delta' &= h \oto{\beta} H \oto{\delta} H \tens H \oto{\alpha \tens \alpha} h \tens h \\ 
        \epsilon' &= h \oto{\beta} H \oto{\epsilon} \boot \\ 
        \eta' &= \boot \oto{\eta} H \oto{\alpha} h 
\end{align*} 
 
We can calculate the explicit forms of this structure in the case where $(\alpha,\beta) = (\psi_0,\phi_0)$, to find that these four maps are
given by:
\fullwidth{Fe-splitting.pdf}
Trivially, $\boot$ bears a Frobenius algebra structure in $A$, hence, so too does its image $F\boot$ under the separable Frobenius functor $F$.
The above calculation proves a conjecture of Dimitri Chikhladze that these two Frobenius algebra structures on $F\boot$ coincide.

\section{Change of Base for the Tannaka Adjunction}\label{change-of-base}

We have seen that, for fixed $\V$, there is an adjunction: 
\[ \bfig \adjunction{\recon{\V}}{\left(\wba \V\right)^{\op}}{\tan_\V}{\mod_\V} \efig \]

Now let us consider what happens when we vary the base category $\V$. We must define a suitable category through which $\V$ is to vary. 
\begin{definition}
	Denote by $\KK$ the 2-category whose objects are braided monoidally complete categories, whose arrows are separable Frobenius
	monoidal functors which are braided as monoidal functors and braided as comonoidal functors, 
	and whose 2-cells are monoidal and comonoidal natural transformations.
\end{definition}

\begin{proposition} There is a 2-functor $\recon{-} \colon \KK \too \Cat$ whose value at a braided monoidally complete category $\V$ is
$\recon{\V}$ as defined above.
\end{proposition}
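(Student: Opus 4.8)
The plan is to take $\recon{-}$ to be post-composition. On objects it is $\V\mapsto\recon{\V}$ as already defined. On a $1$-cell $K\colon\V\too\W$ of $\KK$ it sends an object $F\colon A\too\V$ of $\recon{\V}$ to $KF\colon A\too\W$, and a morphism $H\colon F\too G$ (a separable Frobenius monoidal functor with $GH=F$) to $H$ again, now regarded as a morphism $KF\too KG$. On a $2$-cell $\theta\colon K\Rightarrow L$ of $\KK$ it acts by whiskering, with component at $F$ the transformation $\theta F\colon KF\Rightarrow LF$. It then remains to check that each assignment is well defined and that together they verify the strict $2$-functor axioms.

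For the $1$-cell part there are three points. First, $KF$ is again separable Frobenius monoidal: this is the closure of $\sfm$ under composition, the composite monoidal and comonoidal structures on $KF$ being the evident composites $KFx\tens KFy\too K(Fx\tens Fy)\too KF(x\tens y)$ and dually, with the associativity, unit, Frobenius and separability axioms for $KF$ obtained by pasting those for $K$ and $F$ — for instance separability of $KF$ reduces, using separability of $K$ in the middle, to $K$ applied to the separability identity of $F$. Second, $KF$ is of reconstruction type: an object of $\KK$ is braided monoidally complete, so $\W$ is monoidally complete and braided and only the existence of left duals remains; but $Fa$ has a left dual in $\V$ since $F$ is of reconstruction type, and $K$, being Frobenius monoidal, preserves duals (recorded in Section~\ref{graphical-functors}), so the $K$-image of that dual is a left dual of $KFa$. (The braided structure carried by $K$ plays no role in this proposition.) Third, $H$ remains a morphism in $\recon{\W}$, since $(KG)H=K(GH)=KF$ and $H$ is unchanged. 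Hence $\recon{K}\colon\recon{\V}\too\recon{\W}$ is a well-defined functor, acting as the identity on underlying functors and so preserving composites and identities.

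For $2$-functoriality, the $1$-cell laws $\recon{(LK)}=\recon{L}\,\recon{K}$ and the identity law hold strictly because composition of functors is strictly associative and unital. On a $2$-cell $\theta\colon K\Rightarrow L$, the whiskered $\theta F$ is again monoidal and comonoidal because $\theta$ is, hence is a morphism $\recon{K}(F)\too\recon{L}(F)$ once $\recon{\W}$ is read with its evident $2$-cells — the monoidal, comonoidal natural transformations lying over the identity of the base — which is the natural sense in which $\recon{-}$ is a $2$-functor into $\Cat$. Naturality of $\recon{\theta}$ in $F$, and preservation of identity $2$-cells and of vertical and horizontal composition, are the standard formal properties of whiskering and need no computation.

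The one step that is not purely formal is the second point above: that post-composition with an arrow of $\KK$ carries reconstruction-type functors to reconstruction-type functors. Everything there rests on the fact that (separable) Frobenius monoidal functors preserve duals — which is exactly why the arrows of $\KK$ are required to be separable Frobenius monoidal, and not merely monoidal or comonoidal — together with monoidal completeness being built into the objects of $\KK$, which renders the other two clauses of ``reconstruction type'' automatic. The remainder is the routine bookkeeping that composites and whiskerings of separable Frobenius monoidal (co)monoidal data again satisfy the relevant axioms.
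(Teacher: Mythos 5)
Your proof takes the same route as the paper: $\recon{-}$ acts by post-composition on objects and $1$-cells and by whiskering on $2$-cells, with the only substantive check being that post-composition with an arrow of $\KK$ carries functors of reconstruction type to functors of reconstruction type. The paper simply asserts this closure property and calls the rest routine, whereas you correctly trace it to the preservation of duals by Frobenius monoidal functors together with monoidal completeness being built into the objects of $\KK$ — so your write-up is a faithful, slightly more detailed rendering of the paper's argument.
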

\begin{proof} For each object $\V$ in $\KK$, we define $\recon{\V}$ as above, namely, to be the comma category of separable 
Frobenius monoidal functors of reconstruction type into $\V$ with Frobenius monoidal functors between them.
If $\Phi \colon \V \too \W$ is an arrow in $\KK$, then composition with $\Phi$ defines a functor 
$\recon{\Phi} \colon \recon{\V} \too \recon{\W}$, since the composition of a separable Frobenius monoidal functor of 
reconstruction type followed by an arbitrary separable Frobenius monoidal functor is again a separable Frobenius functor of reconstruction type. 
Similarly, given $\Phi,\Psi \colon \V \too \W$ and 
$\alpha \colon \Phi \Too \Psi$ in $\KK$, then $\recon{\alpha} \colon \recon{\Phi} \too \recon{\Psi}$ defines a natural
transformation whose value at an object $F~\colon~A~\too~\V$ of $\recon{\V}$ is $\alpha$ whiskered by $F$. Verification of the 2-functor
axioms is routine.
\end{proof}

We will require the following:
\begin{lemma}[The Bow Lemma] If $F$ is a Frobenius functor which is braided as a monoidal functor or braided as a comonoidal functor, then the following
equation holds: \[ \includegraphics[scale=0.1]{bow-statement.pdf} \]
\end{lemma}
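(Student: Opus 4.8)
The plan is to prove the equation by a direct diagrammatic calculation, using only the structural axioms already in play: associativity and unitality of the monoidal structure $\varphi$ and the comonoidal structure $\psi$, the two Frobenius axioms (Equation~\ref{frobenius-axioms}), the stated braidedness hypothesis, and the naturality and coherence (hexagon and Yang--Baxter) relations for the braiding $\beta$. Before any calculation I would reduce the two alternative hypotheses to one. Reflecting a Frobenius monoidal functor vertically interchanges $\varphi$ with $\psi$ and reverses composition; the Frobenius axioms are invariant under this reflection, so the reflected functor is again Frobenius, and the reflection swaps ``braided as a monoidal functor'' with ``braided as a comonoidal functor''. Since the bow-shaped functor region in the statement is itself invariant under vertical reflection, the comonoidal case is literally the vertical mirror of the monoidal case, and it therefore suffices to treat the case where $F$ is braided as a monoidal functor.

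Starting from the side of the equation that carries the bow, I would first use braided-monoidal naturality of $\varphi$ to push the braid strand across the portion of the functor-region boundary that is glued in by $\varphi$, thereby converting a crossing living in the codomain into $F$ applied to a crossing in the domain. Next I would apply the appropriate Frobenius axiom (Equation~\ref{frobenius-axioms}) to re-route the $\varphi$/$\psi$ structure around the neck of the bow; this is the step that genuinely requires $F$ to be Frobenius rather than merely monoidal-and-comonoidal, since it is exactly what permits a cap-like arc of the boundary to be traded for a cup-like one. A final clean-up --- naturality of $\beta$ together with a single hexagon identity (or, if only one strand threads the bow, just naturality of $\beta$) --- removes the leftover crossings and produces the other side.

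The step I expect to be the main obstacle is the bookkeeping around the self-crossing of the bow: one must track which strand passes over which, hence whether each braid that appears is $\beta$ or its inverse, and arrange the Frobenius re-routing so that precisely the crossing whose compatibility with $\varphi$ is furnished by the hypothesis is the one pushed through the functor boundary --- the ``wrong'' crossing would force one to invoke braidedness with respect to $\beta^{-1}$, which is not assumed. Once the crossings are oriented consistently, the remaining manipulation is short and essentially forced by the axioms.
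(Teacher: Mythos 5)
Your proposal is correct and takes essentially the same route as the paper: the paper also reduces to a single braidedness hypothesis by mirror symmetry (it treats the comonoidal case and obtains the monoidal case by flipping every step), and its calculation consists of exactly the moves you outline --- inserting a crossing and its inverse, using braidedness of the functor together with naturality of the braid, applying a Frobenius axiom with a second use of braidedness, and cancelling the inserted isomorphism. No gap.
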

\begin{proof} We present the case where $F$ is known to be braided as a comonoidal functor; a dual proof can be obtained by taking horizontal flips
of every step. Consider the following calculation:
\fullwidth{bow-lemma.pdf}
The first equality is simply the insertion of an isomorphism (in the codomain) and its inverse. 
The second equality uses the braidedness of the functor on the left and the naturality of the braid on the right. The third equality uses a 
Frobenius axiom followed by another instance of the braidedness of the functor. Finally, the last equality simply cancels out an isomorphism 
(in the domain) with its inverse. \end{proof}

\begin{proposition} There is a 2-functor $\wba - \colon \KK \too \Cat$ whose value at a braided monoidally complete category $\V$ is
$\wba \V$ as defined above.
\end{proposition}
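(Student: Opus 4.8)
The plan is to build the 2-functor $\wba{-} \colon \KK \too \Cat$ by essentially the same recipe used for $\recon{-}$, but now tracking how an arrow $\Phi \colon \V \too \W$ in $\KK$ acts on weak bialgebras and on the weak morphisms between them. On objects of $\KK$, $\wba{\V}$ is already defined. For an arrow $\Phi \colon \V \too \W$, I would send a weak bialgebra $H$ in $\V$ to $\Phi H$, equipped with multiplication $\Phi\mu \circ \monbin$, unit $\Phi\eunit \circ \monnul$, comultiplication $\combin \circ \Phi\Delta$, and counit $\comnul \circ \Phi\epsilon$; this is the standard transport of algebra/coalgebra structure along a (co)monoidal functor. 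The content is that the resulting object is again a \emph{weak} bialgebra, i.e.\ it satisfies the Weak Unit Axioms, the Weak Counit Axioms, and the Bialgebra Axiom (Equations~\ref{weak-unit-axioms},~\ref{weak-counit-axioms},~\ref{weak-bialgebra-axiom}). The Bialgebra Axiom requires $\Phi$ to be separable (so that the $\comnul\monnul$-style detour collapses); this is exactly where the hypothesis that $\KK$'s arrows are \emph{separable} Frobenius monoidal gets used, mirroring the "separability of $F$" step in the proof of Theorem~\ref{weak-bialgebra-thm}. The Weak Unit and Weak Counit Axioms use that $\Phi$ is Frobenius monoidal together with the Bow Lemma, since those axioms contain a braiding inverse to the one in the Bialgebra Axiom and the Bow Lemma is precisely the identity that reconciles a Frobenius braided (co)monoidal functor with such a crossing.

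\textbf{Next I would handle morphisms.} Given a weak morphism of weak bialgebras $f \colon H \too J$ in $\V$, I claim $\Phi f \colon \Phi H \too \Phi J$ is a weak morphism of weak bialgebras in $\W$. Conditions~1 and~2 (commuting with the four canonical idempotents, strictly preserving multiplication and unit) transport along any monoidal functor by naturality of $\monbin,\monnul$ and functoriality, so these are routine. Condition~3 (weak preservation of comultiplication, the equation depicted in the definition via \texttt{weak-comultiplication.pdf}) is the only one with genuine content: one must check that applying $\Phi$, transporting the comultiplications, and inserting the idempotents produces a commuting diagram. This again hinges on $\Phi$ being Frobenius monoidal (to slide $\combin$ past $\monbin$ in the appropriate places) and separable (to collapse one composite), plus naturality of the comonoidal structure maps of $\Phi$ — essentially the same bookkeeping as the proof that the composite of two weak morphisms is a weak morphism, done earlier in Section~\ref{extension-to-morphisms} via \texttt{functoriality-of-weak-morphisms.pdf}. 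So $\wba{\Phi} \colon \wba{\V} \too \wba{\W}$ is a well-defined functor, and it preserves composition and identities on the nose because transport of (co)monoidal structure along functors does.

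\textbf{Then the 2-cells.} For $\Phi, \Psi \colon \V \too \W$ and a monoidal-and-comonoidal natural transformation $\alpha \colon \Phi \Too \Psi$ in $\KK$, I would define $\wba{\alpha} \colon \wba{\Phi} \Too \wba{\Psi}$ to have component at a weak bialgebra $H$ given by $\alpha_H \colon \Phi H \too \Psi H$. Because $\alpha$ is monoidal, $\alpha_H$ commutes with the transported multiplications and units; because $\alpha$ is comonoidal, it commutes with the transported comultiplications and counits strictly. Hence $\alpha_H$ is even a \emph{strong} morphism of weak bialgebras, a fortiori a weak one, and by the Observation in Section~\ref{extension-to-morphisms} it lies in $\wba{\W}$; naturality of $\wba{\alpha}$ in $H$ follows from naturality of $\alpha$. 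The 2-functoriality axioms (respecting vertical and horizontal composition of 2-cells, interchange, units) reduce to the corresponding identities for $\alpha$ in $\KK$ and are routine.

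\textbf{The main obstacle} I expect is the verification that $\Phi H$ satisfies the Weak Unit and Weak Counit Axioms — not because it is deep, but because those axioms involve a braiding crossing of the "wrong" handedness relative to the Frobenius structure, so one cannot simply push $\Phi$ through as transparent boxes. This is exactly the situation the Bow Lemma was stated to address, so the real step is recognizing that the Bow Lemma (applied to $\Phi$, which is braided both as a monoidal and as a comonoidal functor) converts the transported axiom into a form where the Frobenius axioms and ordinary braid axioms finish the job. The other subtle point is Condition~3 for $\wba{\Phi}$ on morphisms, which likewise needs the Frobenius and separability hypotheses rather than mere (co)monoidality; but once the object-level axioms are in hand, this is a short diagram chase of the same flavour as the functoriality-of-weak-morphisms calculation already in the paper. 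Everything else is formal transport-of-structure bookkeeping.
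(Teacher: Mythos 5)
Your plan coincides with the paper's proof in outline: transport the structural maps along $\Phi$, check the Weak Unit, Weak Counit, and Bialgebra Axioms for $\Phi B$, and define the 2-cells by the components $\alpha_B$, which are strict (hence weak) morphisms because $\alpha$ is monoidal and comonoidal. The one substantive error is that you have the roles of the Bow Lemma and of plain braidedness exactly reversed. The weak counit axioms involve only $\epsilon$ and $\mu$, whose transported versions are built from $\monnul$ and $\monoidal$ alone, so the (inverse) braiding occurring there is absorbed directly by the hypothesis that $\Phi$ is braided as a monoidal functor, followed by the Frobenius axioms; dually, the weak unit axioms use only that $\Phi$ is braided as a comonoidal functor. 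No Bow Lemma and no separability enter at that stage. It is the Bialgebra Axiom where the braid ends up sandwiched between comonoidal constraints (from the transported $\Delta$) and monoidal constraints (from the transported $\mu$); converting that crossing of $\W$-wires into $\Phi$ applied to a crossing of $\V$-wires is precisely the content of the Bow Lemma, and only afterwards do the Frobenius axioms and the separability of $\Phi$ collapse the resulting $\comonoidal$-then-$\monoidal$ detours (note also that separability is the \emph{binary} condition, not the nullary one your ``$\comnul\monnul$-style detour'' suggests). So separability alone does not dispatch the Bialgebra Axiom, and the Bow Lemma is not the tool for the weak unit and counit axioms; with those attributions corrected, your argument is the paper's.
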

\begin{proof} Let $\Phi \colon \V \too \W$ be an arrow in $\KK$. Define $\wba \Phi \colon \wba \V \too \wba \W$ as follows: 
Let $(B,\delta,\mu,\eta,\epsilon)$ be a weak bialgebra in $\wba \V$. Define $(\wba \Phi)B$ to be $\Phi B$ equipped with suitably conjugated versions 
of the structural maps of $\V$, this is again a weak bialgebra. 
To see that $(\wba \Phi)B$ satisfies the weak counit axioms, consider the following calculation:
\fullwidth{preservation-of-counit-axioms.pdf}
The first equality in the first line uses the fact that $\Phi$ is braided as a monoidal functor; after that, the equalities in both lines follow 
from the Frobenius axioms, followed by the weak bialgebra counit axioms in the domain. The weak unit axioms are satisfied by the horizontally
flipped versions of the same calculations; this will use the fact that $\Phi$ is braided as a comonoidal functor.

Finally, we must verify the bialgebra axiom. To this end, consider the following: \fullwidth{preservation-of-bialgebra-axiom.pdf}
The first equality holds by the Bow Lemma, the second by both Frobenius axioms and separability of $\Phi$, and the last by the bialgebra 
axiom in $\V$. Thus, $(\wba \Phi)B$ is a weak bialgebra as defined.

Let arrows $\Phi,\Psi \colon \V \too \W$ and 2-cell $\alpha \colon \Phi \Too \Psi$ in $\KK$ be given. Then define $\wba \alpha \colon \wba \Phi
\Too \wba \Psi$ to be $\alpha B \colon \Phi B \too \Psi B$. Since $\alpha$ is monoidal and comonoidal, this defines a \emph{strict} morphism of
weak bialgebras, although we will not need this fact.

Verifying that $\wba-$ so defined satisfies the 2-functor axioms is straightforward.
\end{proof}

With these definitions in hand, we can discuss the naturality of the Tannaka construction:
\begin{proposition} There is a lax natural transformation $\tan-$ from 
$\recon{-}$ to $\left(\wba-\right)^\op$, whose value at a braided monoidally complete $\V$ is the functor 
$\tan_\V~\colon~\recon{\V}~\too~\left(\wba \V\right)^\op$ discussed above.
\end{proposition}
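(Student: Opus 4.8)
The object-components of the asserted lax natural transformation are the functors $\tan_\V \colon \recon{\V} \too \left(\wba \V\right)^\op$ already constructed; what remains is to supply, for each arrow $\Phi \colon \V \too \W$ of $\KK$, a natural transformation (the lax structure $2$-cell) $\tan_\Phi$ comparing the two composites $\recon{\V} \too \left(\wba \W\right)^\op$ obtained by traversing the naturality square either way, and then to verify the usual coherence axioms --- compatibility with composition and with identities of $1$-cells, and the condition governing $2$-cells of $\KK$. First note that, since $\Phi$ is separable Frobenius monoidal, it preserves duals, so for $F \colon A \too \V$ in $\recon{\V}$ the composite $\Phi F \colon A \too \W$ is again of reconstruction type; being a composite of separable Frobenius monoidal functors it lies in $\recon{\W}$, and in fact $\Phi F = \recon{\Phi}(F)$. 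On the other side, $\left(\wba \Phi\right)(\tan_\V F)$ is just $\Phi(\tan_\V F)$ carrying the conjugated weak bialgebra structure.

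\textbf{The structure $2$-cell.} The monoidal structure of $\Phi$ makes $\Phi(\tan_\V F)$ act on $\Phi F$ via
\[ \Phi(\tan_\V F) \tens \Phi F \oto{\monoidal} \Phi\left(\tan_\V F \tens F\right) \oto{\Phi\alpha} \Phi F . \]
By the universal property of the Tannaka object $\tan_\W(\Phi F)$, this action is classified by a unique map $\theta_F \colon \Phi(\tan_\V F) \too \tan_\W(\Phi F)$ in $\W$; equivalently, $\theta_F$ is the canonical comparison between the end in $\V$ transported along $\Phi$ and the end in $\W$, assembled from the comonoidal structure of $\Phi$ and the fact that $\Phi$ preserves duals. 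We take $\tan_\Phi$ to have component $\theta_F$ at $F$. Since $\theta_F$ need not be invertible, the resulting transformation is merely lax rather than a pseudonatural transformation.

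\textbf{$\theta_F$ is a weak morphism of weak bialgebras, naturally in $F$.} Strict preservation of the multiplication and unit (Equations~\ref{def-of-multiplication-eq}~and~\ref{def-of-unit-eq}) is immediate: those discharged forms involve only the action and the \emph{monoidal} structure of the domain functor, and $\Phi$ is monoidal. The comultiplication (Equation~\ref{def-of-comultiplication-eq}) invokes both the monoidal \emph{and} the comonoidal structures, so $\theta_F$ preserves it only weakly, in the sense of Condition~$3$ of a weak morphism; the discharged-form computation is the same in shape as the verification, in Section~\ref{extension-to-morphisms}, that $\tan H$ weakly preserves comultiplication, with $\Phi$ now in the role there played by the domain-side functor and the appeal to its separability replaced by separability of $\Phi$. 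In the same way, commutation of $\theta_F$ with the four canonical idempotents follows as in the $\tan H$ case, using the discharged forms of $r$ and $t$ from Figure~\ref{ants} together with the naturality of the action and the monoidality and comonoidality of $\Phi$. Finally, naturality of $\theta_F$ in $F$ --- that $\tan_\Phi$ is a genuine natural transformation $\recon{\V} \too \left(\wba \W\right)^\op$ --- follows by applying the universal properties of $\tan_\W(\Phi F)$ and $\tan_\W(\Phi G)$ to a morphism $H \colon F \too G$ of $\recon{\V}$, using the defining formula for $\tan H$.

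\textbf{Coherence, and the main obstacle.} For composable $\Phi \colon \V \too \W$ and $\Psi \colon \W \too \X$ one has $\theta^{\Psi\Phi}_F = \theta^{\Psi}_{\Phi F}\,\Psi(\theta^{\Phi}_F)$, since both sides classify one and the same action of $\Psi\Phi(\tan_\V F)$ on $\Psi\Phi F$, namely the one assembled from the monoidal structure of the composite functor $\Psi\Phi$; and $\theta^{\mathrm{id}}_F$ is plainly the identity. For a $2$-cell $\alpha \colon \Phi \Too \Psi$ of $\KK$, the required pasting equation is a routine diagram chase: both composites classify the action obtained by whiskering $\alpha$ into the canonical action, so the equation falls out of the naturality of the classifying bijection in its functor argument together with the definitions of $\recon{\alpha}$ and $\wba \alpha$. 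This establishes the coherence axioms, whence $\tan-$ is a lax natural transformation from $\recon{-}$ to $\left(\wba-\right)^\op$. The only substantive work is the middle step --- showing the comparison $\theta_F$ is a \emph{weak} morphism of weak bialgebras, in particular that it respects the canonical idempotents and weakly respects comultiplication --- but this parallels, almost line for line, the corresponding verification already carried out for $\tan H$ in Section~\ref{extension-to-morphisms}, so no genuinely new calculation is needed.
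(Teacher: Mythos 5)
Your construction of the structure $2$-cell is the same as the paper's: the component at $F$ is the map $\Phi\tan F \too \tan\Phi F$ classified by the action $\Phi\tan F \tens \Phi F \oto{\monoidal} \Phi(\tan F \tens F) \oto{\Phi\alpha} \Phi F$, and the unit/multiplication checks go through as you say, using only the monoidality of $\Phi$. The gap is in the middle step. You dismiss the comultiplication and idempotent verifications as ``parallel, almost line for line'' to the verification that $\tan H$ is a weak morphism in Section~\ref{extension-to-morphisms}, but that analogy does not hold: there, both weak bialgebras $\tan G$ and $\tan F$ live in the \emph{same} base category and act on the \emph{same} functor $F = GH$, so everything reduces to comparing discharged forms of actions on $F$. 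Here the source $\left(\wba\Phi\right)(\tan_\V F)$ carries the \emph{conjugated} structure, whose comultiplication is $\Phi$ applied to the comultiplication of $\tan F$ followed by the comonoidal constraint of $\Phi$, while the comultiplication of $\tan_\W(\Phi F)$ is built from the (co)monoidal constraints of the composite $\Phi F$. Comparing the two forces you to commute the structure maps of $\Phi$ past the braiding of $\W$; this is exactly what the Bow Lemma is for, and it is the only place the hypothesis that arrows of $\KK$ are \emph{braided} (co)monoidal functors gets used. Your proposal never invokes braidedness of $\Phi$ at all, which is a sign that the hardest part of the argument is missing rather than deferred.

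Two smaller points. First, the paper actually shows $\rho\Phi F$ is a \emph{strong} morphism of weak bialgebras (it preserves the comultiplication and counit strictly), the comultiplication case being Figure~\ref{rho-comultiplication}, which uses the Bow Lemma, the Frobenius axioms, and separability of $\Phi$; your weaker claim that it preserves comultiplication ``only weakly'' would suffice for membership in $\wba\W$, but you have not actually established even that, for the reason above. Second, your treatment of the coherence axioms and of naturality in $F$ via the universal property of the Tannaka object is fine and matches the paper's ``routine'' disposal of those steps.
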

\begin{proof}

As promised, we define the 1-cells of the lax natural transformation $\tan -$ to be $\tan_\V$ for each object $\V$ of $\KK$. Given an arrow
$\Phi \colon \V \too \W$ in $\KK$, define the 2-cells of the lax natural transformation $\tan-$ to be $\rho \Phi$:

\[ \bfig \square(-500,-500)<1000,1000>[
\displaystyle \recon{\V}`\left( \wba \V\right)^\op`\displaystyle \recon{\W}`\left(\wba \W\right)^\op;
\tan_\V`\displaystyle \recon{\Phi}`\left(\wba \Phi\right)^\op`\tan_\W] \place(0,0)[\twoar(1,1) \rho \Phi] \efig \]
where $\rho \Phi$ is defined at an object $F \in \recon{\V}$ as the morphism \[ \rho\Phi F \colon \Phi \tan F \too \tan \Phi F \] in 
$\left(\wba \W \right)^\op$ corresponding to \[ \Phi \tan F \tens \Phi F \oto{\monoidal} \Phi\left( \tan F \tens F \right) \oto{\Phi\alpha} \Phi F \]

Verifying that this is natural in $F$ is a routine unravelling of the definitions of $\rho$ and $\tan-$ on arrows.

We must show that $\rho \Phi$ so defined is a weak morphism of weak bialgebras. In fact, it is a \emph{strong} morphism of weak bialgebras.

First, to see that $\rho \Phi$ preserves the unit, consider: \fullwidth{rho/coherence-with-unit.pdf}
The equalities hold by: definition of $\rho$; naturality and monoidality of the monoidal structure of $\Phi$; the definition of the unit of 
$\tan F$; and the definition of the unit of $\tan \Phi F$.

Second, to see that $\rho \Phi$ preserves the counit, consider: \fullwidth{rho/coherence-with-counit.pdf}
The equalities hold by: definition of the counit of $\tan \Phi F$; definition of $\rho$; naturality and monoidality of the monoidal structure of 
$\Phi$; and the definition of the counit of $\tan F$.

Third, to see that $\rho \Phi$ preserves the multiplication, consider: \fullwidth{rho/coherence-with-multiplication.pdf}
The equalities hold by: definition of the multiplication of $\tan \Phi F$; definition of $\rho$; naturality and associativity of the monoidal 
structure of $\Phi$; and the definition of $\rho$ once more.

Fourthly and finally, to see that $\rho \Phi$ preserves the comultiplication, see Figure~\ref{rho-comultiplication}
\fullwidthfigure{rho/coherence-with-comultiplication.pdf}{Preservation of comultiplication by $\rho \Phi$.
Counterclockwise from top-left, the equalities hold by: definition of $\rho$; the bow lemma for $\Phi$; Frobenius and associativity axioms
for $\Phi$; the definition of the comultiplication of $\tan F$; the definition of $\rho$ again; and, finally, the definition of the comultiplication
of $\tan \Phi F$.}{rho-comultiplication}

Verifying the lax natural transformation axioms is routine.\end{proof}

Since, for each $\V$, the functor $\tan_\V$ has a right adjoint,
an application of ``Australian mates'' to this lax natural transformation $\rho$ yields an oplax natural transformation
\[ \bfig \square(-500,-500)<1000,1000>[
\left( \wba \V\right)^\op`\displaystyle\recon{\V}`\left(\wba \W\right)^\op`\displaystyle\recon{\W};
\mod_\V`\left(\wba \Phi\right)^\op`\displaystyle\recon{\Phi}`\mod_\W] \place(0,0)[\twoar(-1,-1) \gamma \Phi] \efig \]

Given a weak bialgebra $B$ in $\V$, the behaviour of $\gamma \colon \mod_\V B \too \mod_\W \Phi B$ can be calculated as
\[ \gamma\left( a, B \tens a \oto{\beta} a, \nabla_a \colon a \too a  \right) = 
\left( \Phi a, \Phi B \tens \Phi a \oto{\monoidal} \Phi(B \tens a) \oto{\Phi \beta} \Phi a, \Phi \nabla_a \right) \]

\bibliographystyle{alpha}
\bibliography{weak-recon}

\end{document}